\newcommandx{\at}[2][1=]{\todo[linecolor=red,backgroundcolor=red!25,bordercolor=red,#1]{#2}}
\DeclareTextFontCommand{\textgoth}{\mathfrak}
\DeclareMathAlphabet{\mathpzc}{OT1}{pzc}{m}{it}
\definecolor{rev1}{HTML}{cb270f}
\definecolor{rev2}{HTML}{1c8235}
\definecolor{rev3}{HTML}{3815B9}
\setlist[enumerate]{leftmargin=.5in}
\setlist[itemize]{leftmargin=.5in}
\newcommand{\Fv}{\mathbf{F}}
\newcommand{\Gv}{\mathbf{G}}
\newcommand{\Iv}{\mathbf{I}}
\newcommand{\Kv}{\mathbf{K}}
\newcommand{\Qv}{\mathbf{Q}}
\newcommand{\Rv}{\mathbf{R}}
\newcommand{\Uv}{\mathbf{U}}
\newcommand{\Vv}{\mathbf{V}}
\newcommand{\Wv}{\mathbf{W}}
\newcommand{\Psiv}{\mathbf{\Psi}}
\newcommand{\cv}{\mathbf{c}}
\newcommand{\gv}{\mathbf{g}}
\newcommand{\hv}{\mathbf{h}}
\newcommand{\kv}{\mathbf{k}}
\newcommand{\xv}{\mathbf{x}}
\newcommand{\yv}{\mathbf{y}}
\newcommand*{\dd}{{\,\mathrm{d}}}
\newcommand*{\spec}{{\mathrm{Sp}}}
\crefname{hypothesis}{Hypothesis}{Hypotheses}
\title{Rigged Dynamic Mode Decomposition:\\Data-Driven Generalized Eigenfunction Decompositions for Koopman Operators}
\author{Matthew J. Colbrook\thanks{Department of Applied Mathematics and Theoretical Physics, University of Cambridge, Cambridge, UK (\email{m.colbrook@damtp.cam.ac.uk}).}
  \and Catherine Drysdale\thanks{Centre for Systems Modelling and Quantitative Biomedicine, University of Birmingham, Birmingham, UK (\email{c.n.d.drysdale@bham.ac.uk}).} \and Andrew Horning\thanks{Department of Mathematics, Massachusetts Institute of Technology, Cambridge, USA (\email{horninga@mit.edu}).}}
\newcommand*{\pp}{{[{-}\pi,\pi]_{\mathrm{per}}}}
\newcommand*{\llog}{{\log(1+\epsilon^{-1})}}
\renewcommand{\L}{\mathcal{L}}
\newcommand{\K}{\mathcal{K}}
\newcommand{\kdmd}{\mathbb{K}_{\text{EDMD}}}
\begin{document}

\maketitle

\begin{abstract}
We introduce the Rigged Dynamic Mode Decomposition (Rigged DMD) algorithm, which computes generalized eigenfunction decompositions of Koopman operators. By considering the evolution of observables, Koopman operators transform complex nonlinear dynamics into a linear framework suitable for spectral analysis. While powerful, traditional Dynamic Mode Decomposition (DMD) techniques often struggle with continuous spectra. Rigged DMD addresses these challenges with a data-driven methodology that approximates the Koopman operator's resolvent and its generalized eigenfunctions using snapshot data from the system's evolution. At its core, Rigged DMD builds wave-packet approximations for generalized Koopman eigenfunctions and modes by integrating Measure-Preserving Extended Dynamic Mode Decomposition with high-order kernels for smoothing. This provides a robust decomposition encompassing both discrete and continuous spectral elements. We derive explicit high-order convergence theorems for generalized eigenfunctions and spectral measures. Additionally, we propose a novel framework for constructing rigged Hilbert spaces using time-delay embedding, significantly extending the algorithm's applicability (Rigged DMD can be used with any rigging). We provide examples, including systems with a Lebesgue spectrum, integrable Hamiltonian systems, the Lorenz system, and a high-Reynolds number lid-driven flow in a two-dimensional square cavity, demonstrating Rigged DMD's convergence, efficiency, and versatility. This work paves the way for future research and applications of decompositions with continuous spectra.
\end{abstract}

\begin{keywords}
Koopman operator, dynamic mode decomposition, data-driven discovery, generalized eigenfunctions, spectral theory
\end{keywords}

\begin{MSCcodes}
37A05, 37A30, 37M10, 47A10, 47A70, 47B33, 65J10, 65P99, 65T99
\end{MSCcodes}

\section{Introduction}

Consider the discrete-time dynamical system defined by
\begin{equation}
\label{eq:DynamicalSystem} 
\xv_{n+1} = \Fv(\xv_n), \qquad n= 0,1,2,\ldots.
\end{equation}
Here, $\xv_n\in\Omega$ denotes the state of the system after $n$ iterations from an initial state $\xv_0$, $\Omega\subseteq\mathbb{R}^d$ is the state space, and the function $\Fv:\Omega \rightarrow \Omega$ governs the evolution of the state. In many modern applications, the function $\Fv$ may be unknown since explicit knowledge of the laws governing the dynamical evolution $\xv_0,\xv_1,\xv_2,\ldots,$ may be absent or incomplete. Instead, one observes a collection of so-called ``snapshots" of the system, i.e.,
\begin{equation}\label{eq:snapshots}
\left(\xv^{(m)},\yv^{(m)}\right)\quad \text{such that}\quad \yv^{(m)}=\Fv(\xv^{(m)}),\qquad\text{for}\qquad m=1,\ldots,M.
\end{equation}
These snapshots could arise from a single trajectory or multiple trajectories.
The question is, how can one use this data to study
the dynamical system in~\cref{eq:DynamicalSystem}?

Originally developed in the early $20^{\rm th}$ century \cite{koopman1931hamiltonian,koopman1932dynamical}, Koopman operator theory has recently emerged as a particularly powerful framework for data-driven analysis of dynamical systems. This theory shifts the focus from state-space dynamics to a space of observables $g:\Omega\rightarrow\mathbb{C}$. As the state evolves under the map $\Fv$, the observables evolve under the action of the \textit{Koopman operator} $\mathcal{K}$, defined via composition with $\Fv$ as
\begin{equation}\label{eq:Koopman_action}
[\mathcal{K}g](\xv) = g(\Fv(\xv)), \quad \text{so that}\quad [\mathcal{K}g](\xv_n)=g(\xv_{n+1}).
\end{equation}
In contrast with the state space dynamics, the evolution of the observables under the action of $\mathcal{K}$ is \textit{linear} and can be studied with classical tools based on modal decompositions. Furthermore, finite-dimensional approximations of $\mathcal{K}$ can be constructed from the snapshot data in~\cref{eq:snapshots} using standardized tools from numerical linear algebra and statistics. The combination of interpretable modal decompositions with accessible algorithms for their computation has sparked a surge of interest in data-driven, Koopman-powered algorithms for analysis, model reduction, prediction, and control of dynamical systems (see the reviews~\cite{mezic2013analysis,budivsic2012applied,brunton2021modern,colbrook2023multiverse}).

Although it is linear, the Koopman operator typically operates on an infinite-dimensional space of observables
because, even for a single observable $g$, the space generated by $\mathcal{K}^ng$ is generically infinite-dimensional. This poses two main challenges. Standard finite-dimensional approximations may induce spurious eigenvalues or completely miss regions of spectra~\cite{colbrookthesis,colbrook2022computation,colbrook2022foundations,colbrook2019compute}. Spectra of Koopman operators may also be far more complex than for matrices, with continuous spectra and a lack of nontrivial eigenspaces playing a role in the associated dynamics~\cite{colbrook2021computingCIMP,colbrook2021computing}. Recently developed methods such as Residual Dynamic Mode Decomposition (ResDMD) \cite{colbrook2021rigorous,colbrook2023residualJFM} and Measure-Preserving Extended Dynamic Mode Decomposition (mpEDMD) \cite{colbrook2023mpedmd} successfully address these complexities in a \textit{Hilbert space framework}: they rigorously approximate Koopman spectra, pseudospectra, eigenfunctions, pseudoeigenfunctions, and (for unitary Koopman operators) spectral measures (see also the related works in~\cref{sec:connections_prev_work}). However, the Hilbert space framework does not provide a complete set of coherent modes, complicating modal analysis, model-order reduction, and other tasks when a continuous spectrum is present.

\begin{figure}
\centering
\includegraphics[width=0.9\textwidth]{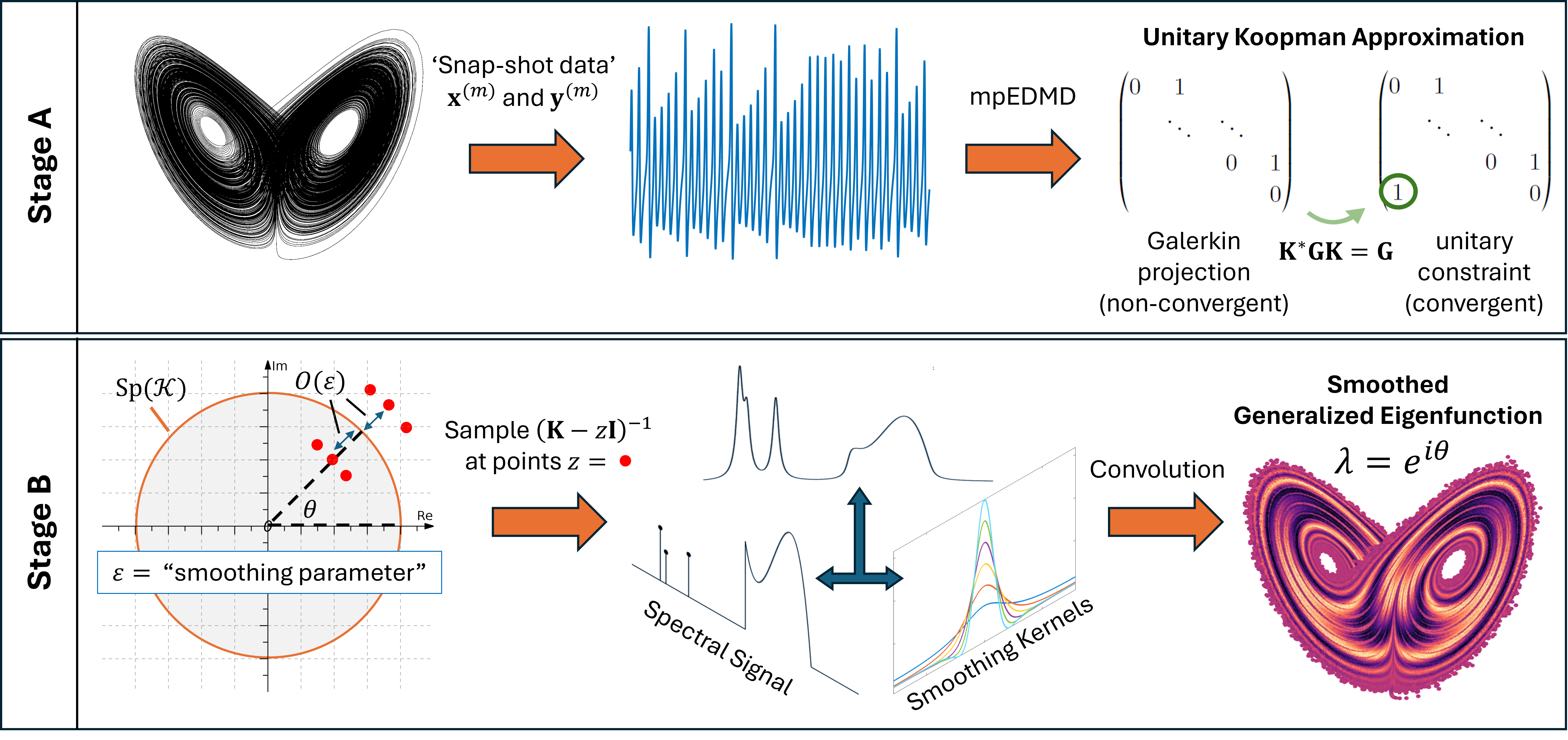}
\caption{Schematic of Rigged DMD. Stage A consists of a data-driven unitary approximation of $\K$. Stage B consists of sampling the resolvent of this approximation to form a smoothed generalized Koopman eigenfunction (and Koopman modes) in the form of a wave-packet approximation. These generalized eigenfunctions provide a decomposition, even in the presence of continuous spectra. The full algorithm is given in \cref{alg:RiggedDMD}.\label{fig:rigged_DMD}}
\end{figure}

\begin{figure}
\centering
\includegraphics[width=0.9\textwidth]{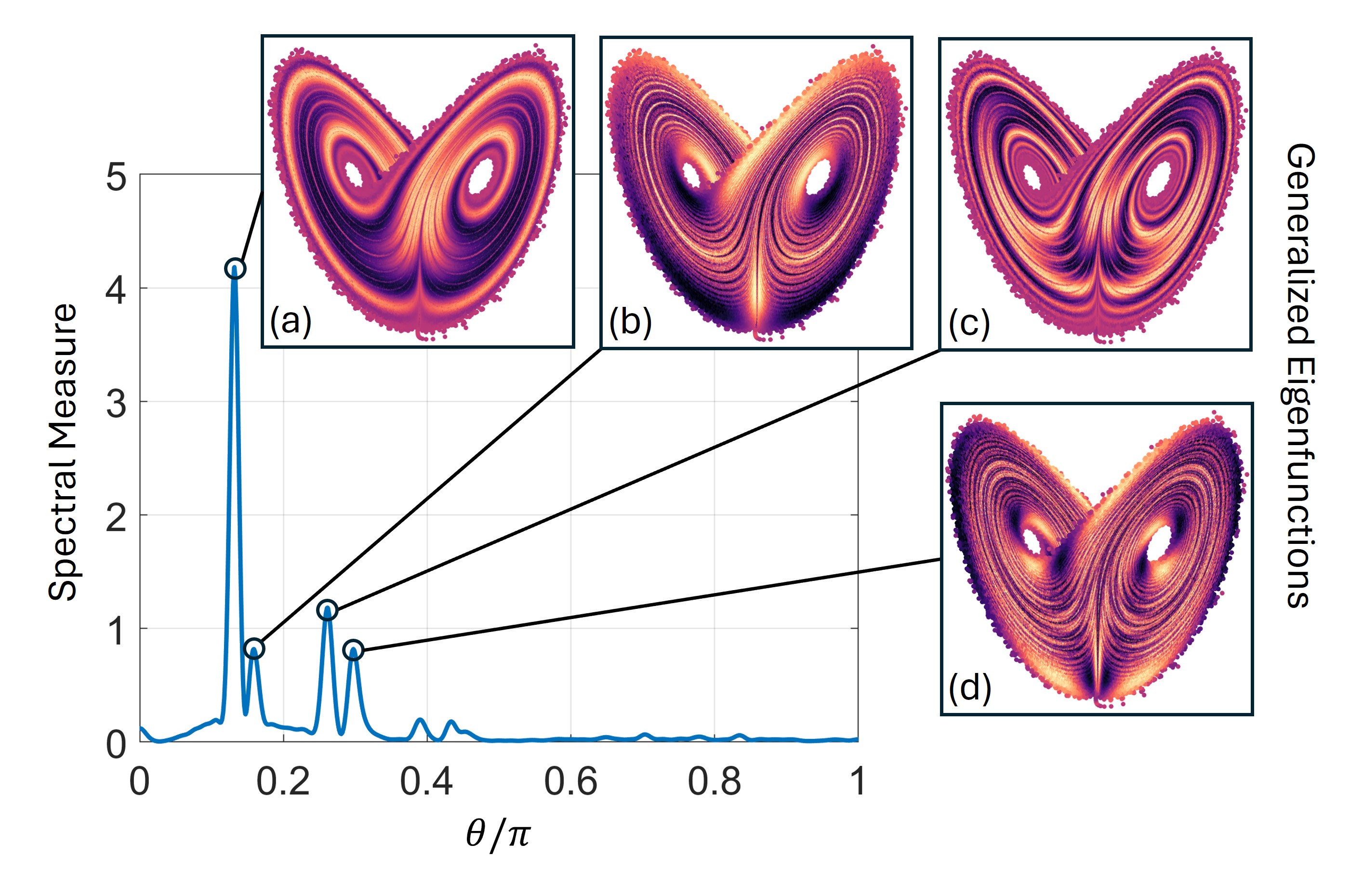}
\caption{Spectral measure and generalized eigenfunctions for the Lorenz system computed using Rigged DMD. The spectral measure is continuous, and the generalized eigenfunctions are coherent.\label{fig:lorenz1}}
\end{figure}

This paper introduces a new Koopman mode decomposition and computational framework for unitary Koopman operators on a \textit{rigged Hilbert space} (see \cref{sec:rigged_H}). In the rigged Hilbert space, the discrete and continuous spectrum are treated on equal footing, and (almost) every point in the spectrum is associated with a set of coherent \textit{generalized eigenfunctions}.\footnote{The terminology is not to be confused with generalized eigenvectors that generate Jordan chains of a finite non-normal matrix \cite{drmavc2023data}.} Generalized eigenfunctions associated with the continuous spectrum are not elements of the Hilbert space. Instead, as distributions in the rigged Hilbert space, they represent coherent linear features of the observables that may be used to identify coherent patterns in~\cref{eq:DynamicalSystem}. 

Our algorithm, which we call \textit{Rigged DMD}, computes smooth \textit{wave-packet approximations} to generalized Koopman eigenfunctions using a two-stage framework (see~\cref{fig:rigged_DMD}). In the first stage, we use mpEDMD to construct a finite-dimensional unitary approximation to the Koopman operator from the snapshot data in~\cref{eq:snapshots}. In the second stage, we use the mpEDMD resolvent to construct wave packets of observables with highly localized spectral content. These approximations converge to generalized eigenfunctions associated with both the discrete and the continuous spectrum (see~\cref{sec:stones_formula}). They are approximately coherent (see~\cref{prop:evc_approx_atom}), achieve high orders of accuracy under natural regularity conditions on the spectral measure of $\mathcal{K}$ (see~\cref{thm:conv_rates}), and converge pointwise in regions where generalized eigenfunctions can be identified with smooth functions (see~\cref{thm:conv_rates2}). For example, \Cref{fig:lorenz1} shows four wave-packet approximations to generalized eigenfunctions associated with the continuous spectrum of a Koopman operator on the Lorenz attractor. Rigged DMD provides the first general procedure for computing generalized eigenfunctions of Koopman operators.

There is extensive literature on rigged Hilbert spaces for self-adjoint operators in PDE analysis and quantum mechanics~\cite{gelfand1955decomp,browder1954eigenfunction,kato1970theory,bohm1989dirac,berezanskii1968expansions}. However, standard rigging techniques do not always apply to Koopman operators (consider the task of constructing a nuclear space on the Lorenz attractor). For this reason, we provide a novel method to construct nuclear spaces for unitary operators with sparse, infinite-dimensional matrix representations (see~\cref{sec:nuclear_construction}). This technique can be applied to \textit{any} Koopman operator via time-delay embedding, enabling the computation of generalized Koopman eigenfunctions for the Lorenz attractor, for example. Moreover, Rigged DMD does not rely on time-delay embedding - it can be applied to any rigging, not necessarily constructed using time-delay embedding.

The paper is organized as follows. We review the spectral theory of unitary Koopman operators on Hilbert spaces and derive Koopman mode decompositions in a rigged Hilbert space in~\cref{sec:background}. The Rigged DMD algorithm is introduced in~\cref{sec:rigged_dmd_alg}, and connections with previous work are discussed in~\cref{sec:connections_prev_work}. Data-driven approximations of the Koopman resolvent are discussed in~\cref{sec:res_comp}, and wave-packet approximations are analyzed in~\cref{sec:stones_formula}. In~\cref{sec:nuclear_construction}, we develop a methodology to rig Hilbert spaces for Koopman operators. A variety of worked examples in~\cref{sec:examples} include: systems with a Lebesgue spectrum, illustrated by Arnold's cat map; integrable systems, illustrated by the nonlinear pendulum; the Lorenz system, demonstrating the utility of the rigged Hilbert space construction via time-delay embedding; and a high-dimensional fluid flow scenario. General purpose code and all the examples of this paper can be found at: \url{https://github.com/MColbrook/Rigged-Dynamic-Mode-Decomposition}.

\section{Background: Koopman mode decompositions}\label{sec:background}

We assume that the system in \cref{eq:DynamicalSystem} preserves a Borel measure $\omega$ (not necessarily finite), i.e., for measurable sets $S\subset\Omega$,
$$
\omega(S)=\omega(\Fv^{-1}(S)),\quad\text{where}\quad \Fv^{-1}(S)=\{\xv:\Fv(\xv)\in S\}.
$$
Such systems preserve an energy or volume associated with $\omega$ and are ubiquitous \cite{arnold1989mathematical,dubrovin2012modern,shields1973theory,hill1986introduction,walters2000introduction}. Many systems admit invariant measures \cite{kryloff1937theorie} or measure-preserving post-transient behavior \cite{mezic2005spectral}. We also assume that the dynamical system is invertible modulo $\omega$-null sets (see \cite[Chapter 7]{eisner2015operator}), so that $\K$ is a unitary operator on the Hilbert space $L^2(\Omega,\omega)$: 
$$
\mathcal{K}:L^2(\Omega,\omega)\rightarrow L^2(\Omega,\omega), \qquad\text{and}\qquad \mathcal{K}\mathcal{K}^*=\mathcal{K}^*\mathcal{K}=I.
$$
Here, $\mathcal{K}^*$ denotes the adjoint of $\mathcal{K}$, and $I$ denotes the identity operator. The spectral theory of unitary operators provides a firm foundation for Koopman mode decompositions.

\subsection{Eigenvalues and eigenfunctions}

Suppose first that an observable
$\phi\in L^2(\Omega,\omega)$ is an eigenfunction of $\K$ with eigenvalue $\lambda$. Then $\phi$ is a coherent observable, because
\begin{equation}
\label{eq:perfectly_coherent}
\phi(\xv_n)=[\mathcal{K}^n\phi](\xv_0)=\lambda^n \phi(\xv_0) \qquad\text{for}\qquad n=0,1,2,\ldots.
\end{equation}
The oscillation and decay/growth of the observable $\phi$ are dictated by the complex argument and absolute value of the eigenvalue $\lambda$, respectively. 
Eigenpairs $(\phi,\lambda)$ encode key information about coherent structures observed in the dynamics in~\cref{eq:DynamicalSystem}.\footnote{However, any constant function is an eigenfunction with eigenvalue one, assuming it is normalizable. This pair is considered trivial because it contains no dynamic information.} For example, the level sets of eigenfunctions determine invariant manifolds \cite{mezic2015applications} and isostables \cite{mauroy2013isostables} of the dynamical system.

If $\mathcal{K}$ has an invariant subspace spanned by eigenfunctions $\{\phi_n\}_{n=1}^\infty$, the \textit{Koopman mode decomposition} (KMD) expands observables $\gv = [g_1,\ldots,g_l]^T$ in this subspace~\cite{mezic2005spectral,rowley2009spectral}:
\begin{equation}\label{eq:classic_KMD}
\gv(\xv) = \sum_{n=1}^\infty \cv_n\phi_n(\xv), \qquad \text{so that}\qquad \gv(\xv_k)=\sum_{n=1}^\infty \cv_n\lambda_n^k\phi_n(\xv_0).
\end{equation}
Here, $\lambda_n$ is the eigenvalue of $\mathcal{K}$ satisfying $\mathcal{K}\phi_n=\lambda_n\phi_n$, for $n=1,2,3,\ldots,$ and the expansion coefficients $\cv_n$ are called the \textit{Koopman modes} of $\gv(\xv)$~\cite{rowley2009spectral}. The KMD provides key insights into the dynamical system, and truncated series may be used for reduced-order models \cite{rowley2010reduced,mohr2022koopman}.

\subsection{Spectrum and pseudoeigenfunctions}

$\K$ may not have non-trivial eigenpairs, e.g., in \cref{fig:lorenz1}. Instead, the correct generalization of the set of eigenvalues of $\mathcal{K}$ is the \textit{spectrum}
$$
\mathrm{Sp}(\mathcal{K})=\left\{z\in\mathbb{C} :(\mathcal{K} - zI)^{-1}\text{ does not exist as a bounded operator}\right\}\subset\mathbb{C}.
$$
Since $\mathcal{K}$ is unitary, we have $\mathrm{Sp}(\mathcal{K})\subset\mathbb{T}=\{z\in\mathbb{C}:|z|=1\}$.
The spectrum includes the set of eigenvalues of $\mathcal{K}$, but in general, it may contain points that are not eigenvalues and hence do not have an associated eigenfunction. However, there are \textit{approximately coherent} observables associated with each point because $\mathcal{K}$ is unitary (therefore, normal). For each point $\lambda\in\mathrm{Sp}(\mathcal{K})$ and any $\delta>0$, there exists a $\phi_\delta\in L^2(\Omega,\omega)$ with norm one such that $\|(\mathcal{K}-\lambda I)\phi_\delta\|\leq\delta$. The observable $\phi_\delta$ is called a $\delta$-pseudoeigenfunction~\cite[p.~31]{trefethen2005spectra} and satisfies
\begin{equation}
\label{approx_cohernecy}
\|\mathcal{K}^n\phi_\delta-\lambda^n \phi_\delta\| = \mathcal{O}(n\delta)\quad \forall n\in\mathbb{N}.
\end{equation}
Like their eigenfunction counterparts, pseudoeigenfunctions encode significant information about the underlying dynamical system \cite{mezicAMS}, such as the global stability of equilibria \cite{mauroy2016global} and ergodic partitions \cite{budivsic2012applied}. However, they do not diagonalise the Koopman operator.

\subsection{Spectral measures and diagonalization}

The spectral theorem \cite[Thm. X.4.11]{conway2019course} provides a diagonalization of $\mathcal{K}$ using a \textit{projection-valued measure} $\mathcal{E}$ supported on $\mathrm{Sp}(\mathcal{K})\subset\mathbb{T}$. To each Borel subset $S\subset\mathbb{T}$, $\mathcal{E}$ associates an orthogonal projection $\mathcal{E}(S)$, which one should think of as the spectral content of $\K$ on $S$. In particular, we have the following decompositions:
\begin{equation}
\label{KMD_proj_meas_diag}
g=\left(\int_\mathbb{T} 1\dd\mathcal{E}(\lambda)\right)g \qquad\text{and}\qquad \mathcal{K}g=\left(\int_\mathbb{T} \lambda\ \mathrm{d}\mathcal{E}(\lambda)\right)g \quad \forall g\in L^2(\Omega,\omega).
\end{equation}
Moreover, the analogue of~\cref{eq:perfectly_coherent} is provided by a functional calculus on ${\rm Sp}(\mathcal{K})$, so that
$$
g(\xv_n)= [\mathcal{K}^ng](\xv_0)= \left[\left(\int_\mathbb{T} \lambda^n\ \mathrm{d}\mathcal{E}(\lambda)\right)g\right](\xv_0)\quad\forall g\in L^2(\Omega,\omega).
$$
While $\mathcal{E}$ associates invariant subspaces of $\mathcal{K}$ to measurable subsets of its spectrum, $\mathcal{E}$ is unable to associate coherent modes in $\L^2(\Omega,\omega)$ to points in the continuous spectrum.

The \textit{scalar-valued} spectral measure of $\mathcal{K}$ with respect to $g\in L^2(\Omega,\omega)$ with $\|g\| = 1$ is a probability measure supported on $\mathbb{T}$ and defined as $\mu_g(S)=\langle \mathcal{E}(S)g,g \rangle$. Following a change of variables $\lambda=\exp(i\theta)$, we consider the measures $\xi_{g}$ on the periodic interval $\pp$ so that $\xi_{g}(S)=\mu_{g}(\exp(iS))$ for $S\subset\pp$. Similarly, we make use of complex Borel measures $\xi_{g,f}(S)=\langle \mathcal{E}(\exp(iS))g,f\rangle$, for $f,g\in L^2(\Omega,\omega)$. The Lebesgue decompositions of the scalar spectral measures $\xi_g$ lead to a partition of the Hilbert space $L^2(\Omega,\omega)$ into closed subspaces $\mathcal{H}_{\mathrm{pp}}$ and $\mathcal{H}_{\mathrm{c}}$. Dynamics in $\mathcal{H}_{\mathrm{pp}}$ can be resolved via a discrete KMD in \cref{eq:classic_KMD}, whereas those in $\mathcal{H}_{\mathrm{c}}$ cannot. These subspaces find interpretations across various applications, including fluid mechanics \cite{mezic2013analysis}, anomalous transport \cite{zaslavsky2002chaos}, and trajectory invariants/exponents \cite{kantz2004nonlinear}.

\subsection{Generalized eigenfunctions}
\label{sec:rigged_H}

To construct a KMD from coherent features, we replace the projection-valued measure with a generalized eigenfunction expansion. Generalized eigenfunctions are distributions on a subspace $\mathcal{S}\subset L^2(\Omega,\omega)$ of test functions, where we require that $\mathcal{K}\mathcal{S}\subset\mathcal{S}$ and $\mathcal{S}$ admits a dense, continuous embedding into $L^2(\Omega,\omega)$ as a complete topological vector space. The dual of $L^2(\Omega,\omega)$ then embeds densely and continuously into the space $\mathcal{S}^*$ of continuous linear functionals on $\mathcal{S}$. By identifying $L^2(\Omega,\omega)$ with its dual, one obtains a triple of dense, continuous embeddings $\mathcal{S}\subset L^2(\Omega,\omega)\subset\mathcal{S}^*$, referred to as a \textit{rigged Hilbert space}. Since the inner product $\langle\cdot,\cdot\rangle$ on $L^2(\Omega,\omega)$ is conjugate linear in its second argument, the duality pairing $\langle \psi | \phi\rangle$ for $\psi\in\mathcal{S}^*$ and $\phi\in\mathcal{S}$ is equal to $\langle \phi, \overline{\psi} \rangle$ when both $\psi$ and $\phi$ are in $L^2(\Omega,\omega)$. Given $\psi\in\mathcal{S}^*$, it is useful to define its conjugate $\psi^*\in\mathcal{S}^*$ by
$\langle \psi^* | \phi\rangle=\overline{\langle \psi| \overline\phi\rangle}$
for $\phi\in\mathcal{S}$. The Koopman operator has a continuous extension to $\mathcal{S}^*$ as
$\smash{\langle \mathcal{K}\psi|\phi \rangle=
\langle \psi|\overline{\mathcal{K}^*\overline{\phi}} \rangle=
\langle \psi|\mathcal{K}^*\phi \rangle}$,
where the second equality holds because $\mathcal{K}$ is a composition operator.

A functional $\psi \in \mathcal{S}^*$ is called a \textit{generalized eigenfunction} of $\mathcal{K}$ if there is a $\lambda\in\mathbb{C}$ such that\vspace{-4mm}
\begin{equation}
\smash{\langle \mathcal{K}\psi | \phi \rangle = \lambda \langle \psi | \phi \rangle \qquad \forall\phi \in \mathcal{S}}.\label{eqn:def_geneig}
\end{equation}
By the density of $\mathcal{S}$ in $\smash{L^2(\Omega,\omega)}$, $\smash{u \in L^2(\Omega,\omega)}$ is a generalized eigenfunction if and only if it is an eigenfunction. From \cref{eqn:def_geneig}, generalized eigenfunctions are coherent because
\begin{equation}
\langle \mathcal{K}\psi | \phi \rangle = \lambda^n \langle \psi | \phi \rangle \qquad \text{for all} \qquad \phi \in \mathcal{S}\text{ and }n\in\mathbb{Z}.
\end{equation}
To achieve a \textit{modal decomposition} using generalized eigenfunctions, we require that $\mathcal{S}$ also be a countably Hilbert nuclear space \cite[Ch.~1.3]{gel4generalized}. Then, there is a complete set of generalized eigenfunctions $\{g_{\theta,k}:\theta\in\pp,k\in\mathcal{J}\}\subset\mathcal{S}^*$ such that for any $g\in \mathcal{S}$, it holds in $L^2(\Omega,\omega)$:
\begin{equation}
\label{eq:rigged_expansion}
g = \sum_{k\in \mathcal{J}}\int_\pp \!\!\!\!\! \langle g_{\varphi,k}^*|g\rangle g_{\varphi,k}\dd\xi_{g_k}(\varphi), \quad \K g = \sum_{k\in \mathcal{J}}\int_\pp \!\!\!\!\!\exp(i\varphi)\langle g_{\varphi,k}^*|g\rangle g_{\varphi,k}\dd\xi_{g_k}(\varphi).
\end{equation}
The spectral measures $\{\xi_{g_k}\}$ reflect a choice of basis and normalization for the generalized eigenspaces. The multiplicity of $\lambda=\exp(i\theta)\in{\rm Sp}(\mathcal{K})$ is the number of nonzero functionals $\{g_{\theta,k}\}$ and the index set $\mathcal{J}$ is, in general, at most countable. This expansion replaces the projection-valued measure expansion in \cref{KMD_proj_meas_diag} with a KMD, where the Koopman mode associated with $g_{\varphi,k}$ is the dual product $\langle g_{\varphi,k}^*|g\rangle$. For example, if each observable in $\gv = [g_1,\ldots,g_l]^T$ lies in $\mathcal{S}$, we recover the vectorized form of \cref{eq:rigged_expansion} as an analogue of the classic KMD in~\cref{eq:classic_KMD}:
$$
\gv = \!\sum_{k\in \mathcal{J}}\int_\pp \!\!\!\!\! \langle g_{\varphi,k}^*|\gv\rangle g_{\varphi,k}\dd\xi_{g_k}(\varphi), \quad\!\text{so that}\!\quad
\mathcal{K}^n\gv =\! \sum_{k\in \mathcal{J}}\int_\pp \!\!\!\!\!\exp(in\varphi) \langle g_{\varphi,k}^*|\gv\rangle g_{\varphi,k}\dd\xi_{g_k}(\varphi).
$$
Here, $\langle g_{\varphi,k}^*|\gv\rangle$ is the Koopman mode associated with the point $\lambda=\exp(in\varphi)\in{\rm Sp}(\mathcal{K})$.

\section{The Rigged DMD algorithm}
\label{sec:rigged_dmd_alg}

To approximate generalized Koopman modes and eigenfunctions from the snaphot data in~\cref{eq:snapshots}, we consider ``wave-packet'' observables of the form
\begin{equation}\label{eqn:wave_packet}
g_\theta^\epsilon=\sum_{k\in \mathcal{J}}\int_\pp \!\!\!\!\! K_\epsilon(\theta-\varphi)\, \langle g_{\varphi,k}^*|g\rangle g_{\varphi,k}\dd\xi_{g_k}(\varphi)\in L^2(\Omega,\omega), \qquad\text{where}\qquad g\in\mathcal{S}.
\end{equation}
This wave-packet approximates the part of \cref{eq:rigged_expansion} with spectral parameter $\lambda=\exp(i\theta)$ via convolution with $K_\epsilon$. The function $K_\epsilon$ is a highly localized smoothing kernel approximating a Dirac delta measure as $\epsilon\downarrow 0$. Therefore, the kernel filters out the contribution of unwanted modes to $g_\theta^\epsilon$ and amplifies the contribution of the desired modes at $\lambda=\exp(i\theta)$ for small $\epsilon$. In the limit as $\epsilon\downarrow 0$, we aim for $g_\theta^\epsilon$ to converge in a suitable sense (e.g., in $\mathcal{S}^*$) to a generalized eigenfunction of $\mathcal{K}$ associated with the point $\lambda=\exp(i\theta)$. Note that while the generalized modes are distributions in the rigged Hilbert space and may not be associated with any standard functions in $L^2(\Omega,\omega)$ (for example, see~\cref{sec:nonlinear_pendulum}), the wave-packet approximations in~\cref{eqn:wave_packet} are relatively well-behaved functions in $L^2(\Omega,\omega)$.

To compute $\smash{g_\theta^\epsilon}$, we exploit a connection between rational convolution kernels and the resolvent, $\smash{(\mathcal{K}-zI)^{-1}}$, of the Koopman operator. For example, let
\begin{equation}\label{eq:poisson_kernel}
K^{\rm (P)}_\epsilon(\varphi)=\frac{1}{2\pi}\frac{1-(1+\epsilon)^{-2}}{1+(1+\epsilon)^{-2}-2(1+\epsilon)^{-1}\cos(\varphi)}
\end{equation}
be the Poisson kernel for the unit disc~\cite[p.16]{katznelson2004introduction}. If we set $r=1/(1+\epsilon)<1$, then (see~\cref{sec:stones_formula})
\begin{equation}\label{eq:poisson_packet}
g_\theta^\epsilon = \frac{1}{4\pi}\left[(\mathcal{K}+re^{i\theta}I)(\mathcal{K}-re^{i\theta}I)^{-1} - (\mathcal{K}+r^{-1}e^{i\theta}I)(\mathcal{K}-r^{-1}e^{i\theta}I)^{-1}\right]g.
\end{equation}
Therefore, we can explicitly compute $g_\theta^\epsilon$ by sampling the resolvent at a few strategic points in the complex plane. Although the Poisson kernel produces low-order accurate approximations to the generalized eigenfunctions in general, we construct high-order accurate rational convolution kernels in \cref{sec:high_order_kernels} and analyze their convergence properties in \cref{sec:convergence_theorems}.

The question remains: How can we approximate $\smash{(\mathcal{K}-zI)^{-1}}$ from the snapshot data in \cref{eq:snapshots}? A first attempt may project $\K$ onto a finite-dimensional subspace of observables. This is equivalent to using Extended Dynamic Mode Decomposition (EDMD) \cite{williams2015data} in the large data limit $M\rightarrow\infty$, which generally does not work.

\begin{example}[Non-convergence of EDMD]\label{sec:EDMD_not_converge}
Consider the Hilbert space $l^2(\mathbb{Z})$ with orthonormal basis $\{e_j\}_{j=-\infty}^\infty$. Let $\K$ be the bilateral shift defined by
$
\K e_{j}=e_{j-1}.
$
This operator is unitary and $\spec(\K)=\mathbb{T}$. Such shifts commonly appear when considering the restriction of Koopman operators to subspaces of observables, and we will revisit this example in \cref{sec:example_integrable_systems}. If we restrict the operator to $\mathrm{span}\{e_{-N},\ldots,e_{N}\}$, the large data limit $M\rightarrow\infty$ of the EDMD matrix approximation of $\K$ is the finite $(2N+1)\times (2N+1)$ Jordan matrix
$$
\kdmd=
\begin{pmatrix}
0& 1& & \\
  &\ddots&\ddots & \\
  & &0& 1\\
 & & &0\\
\end{pmatrix},\quad\text{with}\quad(\kdmd-z\Iv)^{-1}=
\begin{pmatrix}
\frac{-1}{z}& \frac{-1}{z^2} &\hdots &\frac{-1}{z^k}\\
  &\ddots&\ddots & \vdots\\
  & &\frac{-1}{z}& \frac{-1}{z^2}\\
 & & &\frac{-1}{z}\\
\end{pmatrix}
$$
The spectrum of $\kdmd$ is $\{0\}$ and severely unstable. For example, if $0<|z|<1$, then
$
(\kdmd-z\Iv)^{-1}e_0=\sum_{j=1}^N -z^{-j}e_{-j}
$
grows exponentially as $N\rightarrow\infty$, whereas
$
(\K-zI)^{-1}e_0=\sum_{j=1}^\infty z^{j-1} e_j.
$\hfill $\blacksquare$
\end{example}

To rectify this, we use mpEDMD, which computes the \textit{unitary part} of a Galerkin projection of $\mathcal{K}$ onto a finite-dimensional subspace spanned by a dictionary $\{\psi_1,\ldots,\psi_{N}\} \subset L^2(\Omega,\omega)$. This projection is computed using the snapshot data to approximate integration with respect to $\omega$. Utilizing the unitary part is crucial for achieving convergence (see \cref{cor_mpEDMD_res_conv}). In \cref{sec:EDMD_not_converge}, mpEDMD replaces the Jordan matrix with a circulant matrix
$$
\begin{pmatrix}
0& 1& & \\
  &\ddots&\ddots & \\
  & &0& 1\\
 1 & & &0\\
\end{pmatrix},
$$
effectively placing a $1$ in the bottom-left corner. In general, mpEDMD achieves a unitary approximation by taking the unitary part of a polar decomposition.

\cref{alg:RiggedDMD} summarizes Rigged DMD, which can be split into two key stages (\cref{fig:rigged_DMD}):
\begin{itemize}
	\item\textbf{Stage A:} Use mpEDMD to compute a discretization of $\K$ and $g$, as detailed in \cref{sec:res_comp}. Step 2 represents the observable $g$ in terms of the mpEDMD eigenvectors, which are orthogonal with respect to a data-driven approximation of $\omega$.
	\item\textbf{Stage B:} Build wave-packet approximations through sampling the resolvent, as detailed in \cref{sec:stones_formula}. Step 3 solves a small linear system to construct high-order rational smoothing kernels. Step 4 builds the wave-packet approximation. Step 5 computes spectral measures through an inner product.
\end{itemize}

\begin{algorithm}[t]
\textbf{Input:} Snapshots $\{\xv^{(m)},\yv^{(m)}\}_{m=1}^M$, quadrature weights $\{w_m\}_{m=1}^{M}$, dictionary of observables $\{\psi_j\}_{j=1}^{N}$, $\{a_j\}_{j=1}^m$ with ${\rm Im}(a_j)>0$, smoothing parameter $\epsilon>0$, $\Theta\subset\pp$, observable $g$.\\
\vspace{-4mm}
\begin{tcolorbox}[enhanced,boxsep=3pt,
                  left=1pt,
                  right=1pt,colback=white,title={\textbf{Stage A:} Build discretizations of $\K$ and $g$.},
									colbacktitle=white,coltitle=black,frame style={white, dashed},
									borderline={0.5mm}{0mm}{blue!70!white,dashed},
                  ]
\begin{algorithmic}[1]
\vspace{-2mm}
\State Apply mpEDMD (\cref{alg:mpEDMD}) to compute $\Kv$, $\Vv$ (eigenvectors), $\mathbf{\Lambda}$ (eigenvalues).
\State Compute the vector $\gv = (\Wv^{1/2}\mathbf{\Psiv}_X \Vv)^{\dagger}\Wv^{1/2}\left(g(\xv^{(1)}),\ldots,g(\xv^{(M)})\right)^\top$.\label{alg:last-step}
\end{algorithmic}
\end{tcolorbox}

\begin{tcolorbox}[enhanced,boxsep=3pt,
                  left=1pt,
                  right=1pt,colback=white,title={\textbf{Stage B:} Build wave-packet approximations through sampling the resolvent.},
									colbacktitle=white,coltitle=black,frame style={white, dashed},
									borderline={0.5mm}{0mm}{blue!70!white,dashed},
                  ]
\begin{algorithmic}[1]
\vspace{-2mm}
\setcounterref{ALG@line}{alg:last-step}
\State Solve the $m\times m$ system \cref{eq:vandermonde_condition} for the residues $\alpha_1,\dots,\alpha_m\in\mathbb{C}$.
\State For each $\theta\in \Theta$ and $j=1,\ldots,m$, compute
$$
\gv_\theta^{(j,+)}=(\mathbf{\Lambda}+e^{i\theta-i\epsilon a_j})(\mathbf{\Lambda}-e^{i\theta-i\epsilon a_j})^{-1}\gv,\quad \gv_\theta^{(j,-)}=(\mathbf{\Lambda}+e^{i\theta-i\epsilon \overline{a_j}})(\mathbf{\Lambda}-e^{i\theta-i\epsilon \overline{a_j}})^{-1}\gv.
$$
(NB: No matrix multiplications or inverses are needed in this step since $\mathbf{\Lambda}$ is diagonal.)
\State For each $\theta\in \Theta$, compute
\begin{align*}
\widetilde{\gv}_\theta&=\frac{-1}{4\pi}\sum_{j=1}^{m}\left(\alpha_j\gv_\theta^{(j,+)}-\overline{\alpha_j}\gv_\theta^{(j,-)}\right)\in\mathbb{C}^N&&\text{(mpEDMD eigenvector coordinates)},\\
\gv_\theta &= \Vv\widetilde{\gv}_\theta\in\mathbb{C}^N&&\text{(original dictionary coordinates)}.
\end{align*}
\State \textit{Optional:} If spectral measures are desired, for each $\theta\in \Theta$, compute
$$
\xi(\theta)=\frac{-1}{2\pi}\sum_{j=1}^{m}\mathrm{Re}\left(\alpha_j\gv^*{\gv}_\theta^{(j,+)}\right)\in\mathbb{R}.
$$
\end{algorithmic}
\end{tcolorbox}

\textbf{Output:} Vectors $\{\gv_\theta:\theta\in\Theta\}$ such that each $\Psiv\gv_\theta \in L^2(\Omega,\omega)$ is a wave-packet approximation to a generalized eigenfunction of $\K$ corresponding to spectral parameter $\lambda=\exp(i\theta)$. If desired, smoothed spectral measures $\{\xi(\theta):\theta\in\Theta\}$.
\caption{The Rigged DMD algorithm for computing generalized eigenfunctions of $\K$.}
\label{alg:RiggedDMD}
\end{algorithm}

A few practical points are worth discussing:
\begin{itemize}
	\item We work in mpEDMD eigenvector coordinates. Step 4 requires no matrix multiplication, and it is very cheap to compute wave-packet approximations for different $\theta$.
	\item Wave-packets can be evaluated at any point $\xv\in\Omega$ through the feature map $\Psiv(\xv)=[\psi_1(\xv) \cdots \psi_{{N}}(\xv) ]$. For example, we can evaluate wave-packets at the snapshot data.
	\item Spectral measures are also cheap to evaluate at several $\theta$. We recommend choosing an equispaced $\theta$ grid for visualization, such as in \cref{fig:lorenz1}.
	\item If the state space dimension $d$ is large, it may be difficult to visualize $g_\theta^\epsilon$. Instead, we can visualize the Koopman modes $\langle g_{\varphi,k}^*|\gv\rangle$ for a collection of observables $\gv = [g_1,\ldots,g_l]^T$. This post-processing step is summarized in \cref{alg:RiggedDMD2} and can also be achieved via normalising by the smoothed spectral measures computed in \cref{alg:RiggedDMD}.
	\item There are three main parameters in the algorithm: $M$, $N$, and $\epsilon$. These parameters are often chosen adaptively in relation to one another. For a given dictionary and $N$, the error resulting from a finite $M$ corresponds to a quadrature error and can be controlled in various contexts \cite[Section 4.1.3]{colbrook2023multiverse}. For a given $\epsilon$, we can determine a posteriori, through the computation of residuals and ResDMD \cite[Appendix C]{colbrook2021rigorous}, whether the dictionary is sufficiently rich to approximate the resolvent.
\end{itemize}
To explain how and why Rigged DMD works, we describe stages A and B and analyze key notions of convergence in greater detail in~\cref{sec:res_comp} and~\cref{sec:stones_formula} below. The reader may safely skip to the examples in \cref{sec:examples}.

\begin{algorithm}[t]
\textbf{Input:} Snapshot data $\{\xv^{(m)},\yv^{(m)}\}_{m=1}^M$, quadrature weights $\{w_m\}_{m=1}^{M}$, dictionary of observables $\{\psi_j\}_{j=1}^{N}$, $\{a_j\}_{j=1}^m$ with ${\rm Im}(a_j)>0$, smoothing parameter $\epsilon>0$, angles $\Theta\subset\pp$, observables $\gv = [g_1,\ldots,g_l]^T$.\\
\vspace{-4mm}
\begin{algorithmic}[1]
\State Apply steps 1 -- 5 of Rigged DMD (\cref{alg:RiggedDMD}) for each observable $g_p$ to compute $
\widetilde{\gv}_\theta^{(p)}$ (where the superscript denotes dependence on $p$) for $p=1,\ldots,l$ and $\theta\in\Theta$.
\State For each $\theta\in\Theta$, compute the mean
$\smash{\widetilde{\gv}_\theta=\frac{1}{l}\sum_{p=1}^l\widetilde{\gv}_\theta^{(p)}}$
and the vector $\cv_\theta\in\mathbb{C}^l$ defined component-wise by
$
[\cv_\theta]_p=\widetilde{\gv}_\theta^*\widetilde{\gv}_\theta^{(p)}.
$
\end{algorithmic}
\textbf{Output:} Vectors $\{\cv_\theta:\theta\in\Theta\}$.
\caption{Post-processing of Rigged DMD to compute generalized Koopman modes.}
\label{alg:RiggedDMD2}
\end{algorithm}

\subsection{Connections with previous work}
\label{sec:connections_prev_work}
Generalized eigenfunctions of Perron--Frobenius operators, which are pre-adjoints of Koopman operators, are well-studied. The decay rates of correlation functions, known as Pollicott--Ruelle resonances \cite{pollicott1985rate,ruelle1986resonances}, correspond to a distinct type of generalized eigenvalue of the Perron--Frobenius operator. Numerous studies have explored these expansions for specific examples of maps \cite{antoniou1993spectral,hasegawa1993spectral,saphir1992spectral,gaspard1992diffusion,hasegawa1992decaying,gaspard1992r,tasaki1993deterministic,antoniou1997generalized,antoniou1993generalized,dorfle1985spectrum,antoniou1997resonances,suchanecki1996rigged}. See also work on isolated spectra of quasi-compact transfer operators \cite{keller1999stability,baladi2000positive,dellnitz2000isolated}. Froyland has developed numerical techniques for isolated spectra and eigenfunctions \cite{froyland1997computer,froyland2007ulam,froyland2008unwrapping,froyland2014detecting,crimmins2020fourier}. A key distinction between these works and ours is that Pollicott--Ruelle resonances typically do not lie on the unit circle, corresponding instead to extensions of Koopman operators via meromorphic extensions of the resolvent. Operators that admit such extensions are termed Fredholm--Riesz operators \cite{bandtlow1997resonances}. In contrast, our work achieves a generalized eigenfunction expansion with eigenvalues on the unit circle and makes no assumption beyond $\mathcal{K}$ being unitary. Recently, Bandtlow, Just, and Slipantschuk \cite{slipantschuk2020dynamic,bandtlow2023edmd} and Wormell \cite{wormell2023orthogonal} have found a significant link in certain systems: EDMD eigenvalues in the large subspace limit can correspond to these resonances. Meanwhile, generalized eigenfunctions associated with the Koopman operator side have been less explored, with Mezić's work on integrable Hamiltonian systems with one degree of freedom \cite{mezic2020spectrum} being a notable exception. We revisit this example in \cref{sec:example_integrable_systems}, providing an explicit expansion for any number of degrees of freedom using the Radon and Fourier transforms for the action and angle parts, respectively. A general-purpose algorithm for computing generalized eigenfunctions of Koopman operators from snapshot data in \cref{eq:snapshots} has yet to be developed. Rigged DMD addresses this gap.

Spectral measures are formed from one-dimensional projections of generalized eigenfunctions. Methods for computing spectral measures of Koopman operators have been developed in the last few years.
We have already mentioned ResDMD \cite{colbrook2021rigorous} and mpEDMD \cite{colbrook2023mpedmd}.
ResDMD computes smoothed approximations of spectral measures associated with general measure-preserving dynamical systems using either the resolvent operator or filters applied to correlations. The paper \cite{colbrook2021rigorous} includes explicit high-order convergence theorems for the computation of spectral measures in various senses, including the density of the continuous spectrum, spectral projections of subsets of the unit circle, and the discrete spectrum. Similar smoothing techniques can also be used for self-adjoint operators \cite{colbrook2021computing}. The spectral measures of the mpEDMD approximations converge weakly to their infinite-dimensional counterparts. Other methods include compactification for Koopman generators of continuous-time systems \cite{das2021reproducing,valva2023consistent}, the partitioning of state space to obtain periodic approximations \cite{govindarajan2019approximation,govindarajan2021approximation}, post-processing of the moments of spectral measures of ergodic systems using the Christoffel--Darboux kernel \cite{korda2020data}, and harmonic averaging and Welch's method for post-transient flows \cite{arbabi2017study}.

The aforementioned papers compute spectral measures, but it is not always clear how their techniques can be extended to generalized eigenfunctions. Nevertheless, we build on some of these methods. We use mpEDMD as a discretization method to construct data-driven approximations of the resolvent $(\K-zI)^{-1}$. \Cref{sec:EDMD_not_converge} demonstrates the necessity for using mpEDMD. We then use the resolvent to compute smoothed approximations of generalized eigenfunctions. This smoothing is analogous to ResDMD, which computed smoothed approximations of spectral measures.

\section{Data-driven computation of the resolvent}
\label{sec:res_comp}

To compute the resolvent $(\K-zI)^{-1}g$, we will apply mpEDMD \cite{colbrook2023mpedmd}, which enforces measure-preserving approximations of $\K$. The mpEDMD algorithm is simple and robust, with no tuning parameters. Importantly, one can show its convergence of spectral properties and the resolvent (see~\cref{sec:res_conv}).

\subsection{Extended DMD}
\label{sec:EDMD}

The starting point of EDMD \cite{williams2015data} is a dictionary $\{\psi_1,\ldots,\psi_{N}\}\subset L^2(\Omega,\omega) $, i.e., a list of observables.  These observables generate a finite-dimensional subspace $V_N=\mathrm{span}\{\psi_1,\ldots,\psi_{N}\}$. Utilizing the snapshot data in \cref{eq:snapshots}, EDMD constructs a matrix $\kdmd\in\mathbb{C}^{N\times N}$. This matrix acts on coefficients within expansions in terms of the dictionary to approximate the action of the Koopman operator on $V_N$:
$$
{[\mathcal{K}\psi_j](\xv) = \psi_j(\Fv(\xv)) \approx \sum_{i=1}^{N} (\kdmd)_{ij} \psi_i(\xv)},\quad1\leq j\leq {N}.
$$
To build $\kdmd$, we view the snapshot data $\{\xv^{(m)}\}_{m=1}^{M}$ as quadrature nodes for integration with respect to $\omega$ and weights $\{w_m\}_{m=1}^{M}$.\footnote{There are typically three scenarios for convergence of the quadrature rule (see the discussion in \cite{colbrook2023multiverse}): high-order quadrature rules, suitable for small $d$ and when we are free to choose $\{\xv^{(m)}\}_{m=1}^{M}$; drawing $\{\xv^{(m)}\}_{m=1}^{M}$ from a single trajectory and setting $\omega_m=1/M$, suitable for ergodic systems; and drawing $\{\xv^{(m)}\}_{m=1}^{M}$ at random.} Define the vector-valued feature map $\Psiv(\xv)=\begin{bmatrix}\psi_1(\xv) & \cdots& \psi_{{N}}(\xv) \end{bmatrix}\in\mathbb{C}^{1\times {N}}$, the weight matrix $\Wv=\mathrm{diag}(w_1,\ldots,w_{M})$, and let
\begin{equation}
	\begin{split}
		\Psiv_X=\begin{pmatrix}
			       \Psiv(\xv^{(1)}) \\
			       \vdots              \\
			       \Psiv(\xv^{(M)})
		       \end{pmatrix}\in\mathbb{C}^{M\times N},\quad
		\Psiv_Y=\begin{pmatrix}
			       \Psiv(\yv^{(1)}) \\
			       \vdots              \\
			       \Psiv(\yv^{(M)})
		       \end{pmatrix}\in\mathbb{C}^{M\times N}.
		\label{psidef}
	\end{split}
\end{equation}
Assuming that the quadrature approximations are convergent, we have that
\begin{equation}
	\label{quad_convergence}
	\lim_{M\rightarrow\infty}[\Psiv_X^*\Wv\Psiv_X]_{jk} = \langle \psi_k,\psi_j \rangle\quad \text{ and }\quad \lim_{M\rightarrow\infty}[\Psiv_X^*\Wv\Psiv_Y]_{jk} = \langle \mathcal{K}\psi_k,\psi_j \rangle.
\end{equation}
Therefore, the EDMD matrix is defined as (without loss of generality, ${\rm rank}(\Wv^{1/2}\Psiv_X)=N$)
$$
	\kdmd= (\Wv^{1/2}\Psiv_X)^\dagger \Wv^{1/2}\Psiv_Y=(\Psiv_X^*\Wv\Psiv_X)^\dagger\Psiv_X^*\Wv\Psiv_Y,
$$
where `$\dagger$' denotes the pseudoinverse. Hence, $\kdmd$ approaches a matrix representation of $\mathcal{P}_{V_{N}}\mathcal{K}\mathcal{P}_{V_{N}}^*$ in the large data limit, where $\mathcal{P}_{V_{N}}$ denotes the orthogonal projection onto $V_{N}$.

\subsection{Measure-preserving EDMD}

The Gram matrix $\Gv=\Psiv_X^*\Wv\Psiv_X$ provides an approximation of the inner product $\langle \cdot,\cdot\rangle$ on $L^2(\Omega,\omega)$. Given $\hv,\gv\in\mathbb{C}^N$, we have that
\begin{equation}
\label{inner_product_G}
\hv^*\Gv\gv=\sum_{j,k=1}^N\overline{\hv_j}\gv_k\Gv_{j,k}\approx \sum_{j,k=1}^N  \overline{\hv_j}\gv_k\langle \psi_k,\psi_j \rangle=\langle \Psiv \gv,\Psiv \hv \rangle.
\end{equation}
If the convergence in \cref{quad_convergence} holds, the left-hand side of \cref{inner_product_G} converges to the right-hand side as $M\rightarrow\infty$. Hence, if $g=\Psiv\gv\in V_N$ and we approximate $\mathcal{K}$ on $V_N$ by a matrix $\Kv$,
$$
\|g\|^2\approx\gv^*\Gv\gv,\quad \|\mathcal{K}g\|^2\approx\|\Psiv \Kv\gv\|^2\approx \gv^*\Kv^*\Gv\Kv\gv.
$$
Since $\mathcal{K}$ is an isometry, $\|g\|^2=\|\mathcal{K}g\|^2.$ The mpEDMD algorithm combines EDMD with the constraint $\Kv^*\Gv\Kv=\Gv$. In a nutshell, we enforce that our Galerkin approximation is an isometry with respect to the learned, data-driven inner product induced by $\Gv$. 

\cref{alg:mpEDMD} summarizes the computation of $\Kv$ and its eigendecomposition. Since $\Kv$ is similar to a unitary matrix, its eigenvalues lie along the unit circle. Note that mpEDMD can be used with generic dictionary choices.

\begin{algorithm}[t]
\textbf{Input:} Snapshots $\{\xv^{(m)},\yv^{(m)}\}_{m=1}^M$, quadrature weights $\{w_m\}_{m=1}^{M}$, dictionary of observables $\{\psi_j\}_{j=1}^{N}$.\\
\vspace{-4mm}
\begin{algorithmic}[1]
\State Compute the matrices $\Psiv_X$ and $\Psiv_Y$ defined in \cref{psidef} and $\Wv=\mathrm{diag}(w_1,\ldots,w_{M})$.
\State Compute an economy pivoted QR decomposition $\Wv^{1/2}\Psiv_X=\Qv\Rv$, where $\Qv\in\mathbb{C}^{M\times N}$ and $\Rv\in\mathbb{C}^{N\times N}$.
\State Compute an SVD of $(\Rv^{-1})^{*}\Psiv_Y^*\Wv^{1/2}\Qv=\Uv_1\mathbf{\Sigma} \Uv_2^*$.
\State Compute the eigendecomposition $\Uv_2\Uv_1^*=\hat{\Vv}\mathbf{\Lambda} \hat{\Vv}^*$ (via a Schur decomposition).
\State Compute $\Kv=\Rv^{-1}\Uv_2\Uv_1^*\Rv$ and $\Vv=\Rv^{-1}\hat{\Vv}$.
\end{algorithmic} \textbf{Output:} Koopman matrix $\Kv$, with eigenvectors $\Vv$ and eigenvalues $\mathbf{\Lambda}$.
\caption{The mpEDMD algorithm \cite{colbrook2023mpedmd} using a QR decomposition (see \cite{colbrook2023multiverse}). For stability, we compute the eigendecomposition of $\Kv$ via the unitary matrix $\Uv_2\Uv_1^*$.}
\label{alg:mpEDMD}
\end{algorithm}

\subsection{Convergence to the resolvent}\label{sec:res_conv}

The following theorem shows that mpEDMD provides convergent approximations of $(\K-zI)^{-1}$ for $z\not\in\mathbb{T}$. Combined with the results of \cref{sec:convergence_theorems}, this theorem implies the convergence of Rigged DMD in the large data ($M\rightarrow\infty$) and large dictionary ($N\rightarrow \infty$) limits as $\epsilon\downarrow 0$. In practice, $\epsilon$ is chosen adaptively depending on $M$ and $N$ or vice versa.

\begin{theorem}
\label{cor_mpEDMD_res_conv}
Suppose that $\lim_{N\rightarrow\infty}\!\mathrm{dist}(h,V_{N})=0$ for all $h\in L^2(\Omega,\omega)$ and~\cref{quad_convergence} holds. Then for any $z\in\mathbb{C}\backslash\mathbb{T}$, $g\in L^2(\Omega,\omega)$ and $\gv_N\in\mathbb{C}^N$ with $\lim_{N\rightarrow\infty}\|g-\Psiv \gv_N\|=0$,
\begin{equation}
\label{mpEDMD_res_convergence}
\lim_{N\rightarrow\infty}\limsup_{M\rightarrow\infty} \|(\mathcal{K}-zI)^{-1}g-\Psiv(\Kv-z\Iv)^{-1}\gv_N\|=0.
\end{equation}
\end{theorem}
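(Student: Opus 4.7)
The plan is to decompose the double limit $\lim_N\limsup_M$ through an intermediate ``infinite-data, finite-dictionary'' object. Let $\K_N:V_N\to V_N$ be the unitary operator on $V_N$ whose matrix representation in the $\{\psi_j\}$ basis is the large-$M$ limit of the mpEDMD matrix $\Kv$; by design of mpEDMD, $\K_N$ is the unitary part of the polar decomposition of $\mathcal{P}_{V_N}\K|_{V_N}$, where $\mathcal{P}_{V_N}$ is the $L^2(\Omega,\omega)$-orthogonal projection onto $V_N$. Inserting this intermediate and using the triangle inequality,
\begin{align*}
\bigl\|(\K-zI)^{-1}g - \Psiv(\Kv-z\Iv)^{-1}\gv_N\bigr\| &\leq \bigl\|(\K-zI)^{-1}g - (\K_N-zI_{V_N})^{-1}\Psiv\gv_N\bigr\| \\
&\quad + \bigl\|(\K_N-zI_{V_N})^{-1}\Psiv\gv_N - \Psiv(\Kv-z\Iv)^{-1}\gv_N\bigr\|,
\end{align*}
I would control the second term in the $M\to\infty$ limit (at fixed $N$) and the first term in the $N\to\infty$ limit.

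For the $M$-limit at fixed $N$: the constraint $\Kv^*\Gv\Kv=\Gv$ built into mpEDMD forces the spectrum of $\Kv$ to lie on $\mathbb{T}$ for every $M$, so for $z\not\in\mathbb{T}$ the resolvent $(\Kv-z\Iv)^{-1}$ exists and is uniformly bounded by $\mathrm{dist}(z,\mathbb{T})^{-1}$ in the $\Gv$-induced norm (equivalent to the $L^2$ norm on $V_N$ via $\Psiv$). By~\cref{quad_convergence}, the matrices $\Gv$ and $\Psiv_X^*\Wv\Psiv_Y$ converge entrywise to their true counterparts, and \cref{alg:mpEDMD} is a continuous composition of QR, SVD, and polar-type steps; hence $\Kv$ converges entrywise to the matrix of $\K_N$, and the second term above vanishes as $M\to\infty$.

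For the $N$-limit: set $h=(\K-zI)^{-1}g$ and $\tilde h_N=\mathcal{P}_{V_N}h\in V_N$, so that $\tilde h_N\to h$ in $L^2$ by the density hypothesis. Unitarity of $\K_N$ on $V_N$ yields $\|(\K_N-zI_{V_N})^{-1}\|\leq \mathrm{dist}(z,\mathbb{T})^{-1}$ uniformly in $N$, and the identity
$$(\K_N-zI_{V_N})^{-1}\Psiv\gv_N - \tilde h_N = (\K_N-zI_{V_N})^{-1}\bigl[\Psiv\gv_N - (\K_N-zI_{V_N})\tilde h_N\bigr]$$
reduces the task to showing $\Psiv\gv_N-(\K_N-zI_{V_N})\tilde h_N\to 0$ in $L^2$. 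Since $\Psiv\gv_N\to g$ and $z\tilde h_N\to zh$, this further reduces to $\K_N\tilde h_N\to\K h$.

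The main obstacle is proving this strong convergence. I would exploit the polar factorization $\mathcal{P}_{V_N}\K|_{V_N}=\K_N A_N$ with $A_N=|\mathcal{P}_{V_N}\K|_{V_N}|\geq 0$ and $\|A_N\|\leq 1$. Since $\K$ is an isometry and $\mathcal{P}_{V_N}\to I$ strongly,
$$\|A_N\tilde h_N\|^2=\|\mathcal{P}_{V_N}\K\tilde h_N\|^2\to\|\K h\|^2=\|h\|^2=\lim_N\|\tilde h_N\|^2,$$
so $\langle(I-A_N^2)\tilde h_N,\tilde h_N\rangle\to 0$. Factoring $I-A_N^2=(I-A_N)(I+A_N)$ with $I+A_N\succeq I$ yields $\langle(I-A_N)\tilde h_N,\tilde h_N\rangle\to 0$; combined with $0\leq I-A_N\leq I$, this gives $\|(I-A_N)\tilde h_N\|\to 0$. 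Finally,
$$\K_N\tilde h_N=\K_N A_N\tilde h_N+\K_N(I-A_N)\tilde h_N=\mathcal{P}_{V_N}\K\tilde h_N+o(1)\to\K h,$$
where the last limit uses strong convergence $\mathcal{P}_{V_N}\to I$ and continuity of $\K$. This closes both terms of the triangle inequality and yields the claim. The delicate piece is precisely the identification $A_N\to I$ on the sequence $\{\tilde h_N\}$, which hinges on the isometry $\K^*\K=I$; once in hand, the remainder is a routine uniform-bound / resolvent-perturbation computation.
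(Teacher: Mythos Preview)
Your argument is correct, but it follows a genuinely different route from the paper's proof. The paper does not separate the $M$- and $N$-limits via an intermediate infinite-data operator $\K_N$. Instead, it invokes a lemma (quoted from the mpEDMD paper) asserting the joint convergence
\[
\lim_{N\to\infty}\limsup_{M\to\infty}\|\K^{l}g-\Psiv\Kv^{l}\gv_N\|=0
\quad\text{and}\quad
\lim_{N\to\infty}\limsup_{M\to\infty}\|(\K^*)^{l}g-\Psiv\Kv^{-l}\gv_N\|=0
\]
for every $l\geq 0$, and then expands the resolvent as a Neumann series: for $|z|>1$ one writes $(\K-zI)^{-1}=-z^{-1}\sum_{l\geq 0}z^{-l}\K^{l}$ and similarly for $(\Kv-z\Iv)^{-1}$, bounding the tails uniformly because $\|\Gv^{1/2}\Kv\Gv^{-1/2}\|=1$, and handling $|z|<1$ symmetrically with $\K^*$ and $\Kv^{-1}$. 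Term-by-term convergence plus the uniform tail bound gives the result directly.

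Your approach trades the Neumann series for a single resolvent identity and a uniform bound $\|(\K_N-zI_{V_N})^{-1}\|\leq\mathrm{dist}(z,\mathbb{T})^{-1}$, which has the pleasant feature of treating $|z|>1$ and $|z|<1$ at once. The price is that you must essentially reprove the $l=1$ case of the cited lemma: your polar-decomposition computation showing $\|(I-A_N)\tilde h_N\|\to 0$ and hence $\K_N\tilde h_N\to\K h$ is precisely the mechanism by which unitarity of $\K$ forces strong convergence of the mpEDMD unitary part, which the paper outsources. One minor soft spot in your write-up is the $M$-limit step: asserting that ``QR, SVD, and polar-type steps'' are continuous is only safe once you know the limiting matrix $\mathcal{P}_{V_N}\K|_{V_N}$ is invertible (so the unitary polar factor is unique); this is fine generically but deserves a sentence. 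Overall, your proof is self-contained and slightly more hands-on, while the paper's is more modular, leaning on the external lemma and a power-series argument.
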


\begin{proof}
See \cref{sec:mpEDMD_app}.
\end{proof}

As we saw in \cref{sec:EDMD_not_converge}, the convergence in \cref{mpEDMD_res_convergence} need not hold for methods such as EDMD. In particular, convergence in the strong operator topology (the convergence of EDMD) need not imply convergence of the resolvent. The fact that mpEDMD provides a unitary approximation is crucial to \cref{cor_mpEDMD_res_conv}.

There are many natural settings where the dictionary does not span all of $L^2(\Omega,\omega)$ (for example, see~\cref{sec:cat_map}). Rather, in this setting the observable of interest belongs to a closed $\mathcal{K}$-invariant subspace of $L^2(\Omega,\omega)$ and the dictionary spans this space in an appropriate limit. \cref{cor_mpEDMD_res_conv} still holds if $L^2(\Omega,\omega)$ is replaced by any closed $\mathcal{K}$-invariant subspace thereof in both the hypotheses and the conclusions. That is, if $V_N$ spans this subspace in the limit $N\rightarrow\infty$, then the mpEDMD resolvent must also converge to the resolvent restricted to that subspace in the strong operator topology. Consequently, we can use the mpEDMD resolvent to approximate generalized Koopman modes for any observable in a closed $\mathcal{K}$-invariant subspace.

\section{Smooth wave-packet approximations to generalized eigenfunctions}
\label{sec:stones_formula}

To recover generalized eigenfunctions of $\K$ by sampling the resolvent $\smash{(\K-zI)^{-1}}$, we link $\smash{(\K-zI)^{-1}}$ to generalized eigenfunction in two steps. First, generalized eigenfunctions diagonalize the spectral measure on the rigged Hilbert space. For each Borel subset $S\subset\pp$~\cite{gadella2005measure},
\begin{equation}\label{eq:diag_pvm}
\mathcal{E}(S)g = \sum_{k\in \mathcal{J}}\int_S  \langle g_{\varphi,k}^*|g\rangle g_{\varphi,k} \dd\xi_{g_k}(\varphi) \qquad \text{for all} \qquad g\in\mathcal{S}.
\end{equation}
Second, the action of the projection-valued spectral measure $\mathcal{E}$ on observables can be recovered from the resolvent via the \textit{Carathéodory function} of $\mathcal{E}$, defined as
\begin{equation}
\label{eq:carath_def}
F_{\mathcal{E}}(z)=\int_{\pp}\frac{e^{i\varphi}+z}{e^{i\varphi}-z}\dd\mathcal{E}(\varphi)=(\K+zI)(\K-zI)^{-1},\quad |z|\neq 1.
\end{equation}
The second equality follows from the functional calculus of $\K$.\footnote{Carathéodory functions play a key role in the theory of orthogonal polynomials on the unit circle \cite{simon2005orthogonal}, where they are the CMV analogue of the so called $m$-function ($z\mapsto\langle (J-z)^{-1}e_1,e_1\rangle$ for a matrix $J$) in the theory of Jacobi matrices. For connections between the two, see \cite[Section 2.3]{simon2010szegHo}.} Writing $z=re^{i\theta}$ in modulus-argument form and assuming that $r=1/(1+\epsilon)<1$, one finds that (c.f.~\cref{eqn:wave_packet,eq:poisson_kernel,eq:poisson_packet}) 
\begin{equation}
\label{eq:carath_poisson}
\frac{1}{4\pi}\left[F_{\mathcal{E}}(re^{i\theta})-F_{\mathcal{E}}(r^{-1}e^{i\theta})\right]=\frac{1}{2\pi}\int_{\pp}\frac{(1-r^2)\dd\mathcal{E}(\varphi)}{1+r^2-2r\cos(\theta-\varphi)}=[K^{\rm (P)}_\epsilon*\mathcal{E}](\theta).
\end{equation}
The right-hand side is a convolution of $\mathcal{E}$ with the Poisson kernel defined in \cref{eq:poisson_kernel}. \Cref{fig:stone} illustrates the geometry of the left-hand side in the complex spectral domain.

A measure on $\pp$ is the weak limit of the real part of its Carathéodory function as $r\uparrow 1$ and Stone's classic formula~\cite{stone1932linear} recovers $\mathcal{E}$ by taking $r\uparrow 1$.
 Similarly, we can combine~\cref{eq:diag_pvm} and~\cref{eq:carath_poisson} to recover the generalized Koopman modes and eigenfunctions from $F_{\mathcal{E}}(z)$. We obtain convergence rates for wave-packet approximations to Koopman modes in the continuous spectrum under mild regularity conditions on spectral measures of $\mathcal{K}$. We denote the interior of a closed interval $\mathcal{I}$ by ${\rm int}(\mathcal{I})$ and write $g\in\mathcal{C}^{n,\alpha}(\mathcal{I})$ for a function with $n\in\mathbb{N}\cup\{0\}$ continuous derivatives on $\mathcal{I}$ with an $\alpha$-H\"older continuous $n$th derivative.

\begin{theorem}\label{thm:poisson_rates}
Suppose that the spectrum of $\K$ is absolutely continuous on the closed interval $\mathcal{I}\subset\pp$. Given $g,\phi\in\mathcal{S}$, let $\rho_{g,\phi}$ be the Radon--Nikodym derivative of the absolutely continuous component of $\xi_{g,\phi}$, and suppose that $\rho_{g,\phi}\in\mathcal{C}^{n,\alpha}(\mathcal{I})$ with $n\in\mathbb{N}\cup\{0\}$ and $\alpha\in[0,1]$. Denoting $g_\theta^\epsilon=\smash{[K^{\rm (P)}_\epsilon*\mathcal{E}](\theta)g}$, for any fixed $\theta_0\in{\rm int}(\mathcal{I})$, it holds that
$$
\left|\langle g_{\theta_0}^\epsilon|\overline{\phi}\rangle - \rho_g(\theta_0)\langle u_{\theta_0} | \overline{\phi}\rangle\right| = \mathcal{O}(\epsilon\,\llog) + \mathcal{O}(\epsilon^\alpha) \qquad\text{as}\qquad \epsilon\downarrow 0.
$$
Here, $u_\theta\in\mathcal{S}^*$ is a generalized eigenfunction of $\mathcal{K}$ satisfying~\cref{eqn:def_geneig} with $\lambda=\exp(i\theta)$. Moreover, in general, the convergence rate $\mathcal{O}(\epsilon\,\llog)$ is optimal for the Poisson kernel.
\end{theorem}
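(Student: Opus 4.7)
The plan is to convert the left-hand side into a scalar Poisson-kernel convolution against the complex spectral measure $\xi_{g,\phi}$ and then exploit the absolute continuity of the spectrum on $\mathcal{I}$ together with the H\"older regularity of the density. First, since $g_{\theta_0}^\epsilon\in L^2(\Omega,\omega)$, the rigged pairing coincides with the $L^2$ inner product, and by the functional calculus of $\K$ together with \cref{eq:diag_pvm},
\[
\langle g_{\theta_0}^\epsilon\,|\,\overline{\phi}\rangle \;=\; \int_\pp K^{(\mathrm{P})}_\epsilon(\theta_0-\varphi)\dd\xi_{g,\phi}(\varphi).
\]
The rigged expansion~\cref{eq:rigged_expansion} applied to $\langle\mathcal{E}(\cdot)g,\phi\rangle$, under the normalization implicit in $\{g_{\varphi,k}\}$, pins down $\rho_{g,\phi}(\theta_0)=\rho_g(\theta_0)\langle u_{\theta_0}|\overline{\phi}\rangle$. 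The theorem therefore reduces to estimating $[K^{(\mathrm{P})}_\epsilon*\xi_{g,\phi}](\theta_0)-\rho_{g,\phi}(\theta_0)$.

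For the upper bound, I would decompose $\xi_{g,\phi}=\rho_{g,\phi}(\varphi)\dd\varphi+\dd\xi^{\mathrm{s}}_{g,\phi}$ and pick $\delta>0$ with $[\theta_0-\delta,\theta_0+\delta]\subset\mathrm{int}(\mathcal{I})$. The hypothesis that the spectrum is absolutely continuous on $\mathcal{I}$ forces $\xi^{\mathrm{s}}_{g,\phi}$ to sit outside the window, and $K^{(\mathrm{P})}_\epsilon(\psi)=\mathcal{O}(\epsilon)$ uniformly for $|\psi|\geq\delta$; hence the singular part together with the absolutely continuous contribution from outside the window contribute at most $\mathcal{O}(\epsilon)\cdot|\xi_{g,\phi}|(\pp)=\mathcal{O}(\epsilon)$. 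On the window I would split
\[
\int_{|\psi|<\delta}\!K^{(\mathrm{P})}_\epsilon(\psi)\rho_{g,\phi}(\theta_0+\psi)\dd\psi = \rho_{g,\phi}(\theta_0)\!\!\int_{|\psi|<\delta}\!\!K^{(\mathrm{P})}_\epsilon(\psi)\dd\psi + \!\int_{|\psi|<\delta}\!\!K^{(\mathrm{P})}_\epsilon(\psi)\big[\rho_{g,\phi}(\theta_0+\psi)-\rho_{g,\phi}(\theta_0)\big]\dd\psi.
\]
The first summand equals $\rho_{g,\phi}(\theta_0)(1-\mathcal{O}(\epsilon))$ since $K^{(\mathrm{P})}_\epsilon$ has unit mass on $\pp$ and $\mathcal{O}(\epsilon)$ tails beyond $[-\delta,\delta]$. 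For the difference term, the H\"older bound $|\rho_{g,\phi}(\theta_0+\psi)-\rho_{g,\phi}(\theta_0)|\leq C|\psi|^\alpha$ reduces matters to the Poisson moment $\int_{|\psi|<\delta}K^{(\mathrm{P})}_\epsilon(\psi)|\psi|^\alpha\dd\psi$. Using $K^{(\mathrm{P})}_\epsilon(\psi)\asymp\epsilon/(\epsilon^2+\psi^2)$ for small arguments and the substitution $\psi=\epsilon u$, this moment evaluates to $\mathcal{O}(\epsilon^\alpha)$ for $\alpha\in[0,1)$ and $\mathcal{O}(\epsilon\llog)$ for $\alpha=1$, yielding the stated bound.

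For optimality I would construct $\K$, $g$, and $\phi$ (e.g., $\K$ a multiplication operator on $L^2(\pp)$ by $e^{i\varphi}$ with a suitable product $g\overline{\phi}$) so that $\rho_{g,\phi}(\varphi)=\rho_{g,\phi}(\theta_0)+c|\varphi-\theta_0|$ locally. Since $K^{(\mathrm{P})}_\epsilon$ is even, the first moment $\int \psi\,K^{(\mathrm{P})}_\epsilon(\psi)\dd\psi$ vanishes but $\int|\psi|\,K^{(\mathrm{P})}_\epsilon(\psi)\dd\psi\sim(2/\pi)\,\epsilon\log(1/\epsilon)$, so cancellation fails and the leading error is exactly $\sim(2c/\pi)\,\epsilon\log(1/\epsilon)$, matching the upper bound.

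The main obstacle I anticipate is the identification $\rho_g(\theta_0)\langle u_{\theta_0}|\overline{\phi}\rangle=\rho_{g,\phi}(\theta_0)$ in the first paragraph in the presence of spectral multiplicity: it requires consistent normalization of the generalized eigenfunctions $\{g_{\varphi,k}\}$ and careful bookkeeping of the sum over $k\in\mathcal{I}$ in~\cref{eq:rigged_expansion}. The remaining Poisson-kernel moment computations are routine. The optimality example must also be chosen so that no accidental extra regularity (e.g., $\rho_{g,\phi}\in\mathcal{C}^{1,\alpha}(\mathcal{I})$ with $\alpha>0$) suppresses the logarithmic factor — which is why a bona fide Lipschitz corner is used at $\theta_0$.
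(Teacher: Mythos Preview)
Your proposal is correct and follows essentially the same route as the paper: reduce $\langle g_{\theta_0}^\epsilon|\overline{\phi}\rangle$ to the scalar convolution $[K^{(\mathrm{P})}_\epsilon*\xi_{g,\phi}](\theta_0)$, identify the target $\rho_g(\theta_0)\langle u_{\theta_0}|\overline{\phi}\rangle$ with $\rho_{g,\phi}(\theta_0)$, and then estimate the Poisson-kernel error. The paper dispatches the last step by invoking its general $m$th-order pointwise bound (\cref{thm:unitary_pointwise_convergence}) specialized to $m=1$, whereas you carry out the near/far split and moment computation by hand; and the multiplicity obstacle you flag is exactly what the paper resolves via the normalization $\langle u_\varphi^*|g\rangle=1$ (so that $\dd\xi_{g,\phi}=\langle u_\varphi|\overline{\phi}\rangle\dd\xi_g$ on $\mathcal{I}$), which collapses the sum over $k$ into a single generalized eigenfunction depending on $g$.
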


\begin{proof}
See~\cref{sec:conv_WPA_app}.
\end{proof}

The factor of $\rho_g(\theta)$ in~\cref{thm:poisson_rates} is equivalent to a renormalization of the generalized eigenfunctions in~\cref{eq:rigged_expansion} so that $\{g_\theta^\epsilon\}$ are orthonormal with respect to Lebesgue measure on the continuous spectrum of $\mathcal{K}$, rather than with respect to a spectral measure of $\mathcal{K}$, in the limit $\epsilon\downarrow 0$. If desired, $g_\theta^\epsilon$ can easily be rescaled with the smoothed measure computed in~\cref{alg:RiggedDMD}.

\begin{figure}
\centering
\raisebox{-0.5\height}{\includegraphics[width=0.35\textwidth]{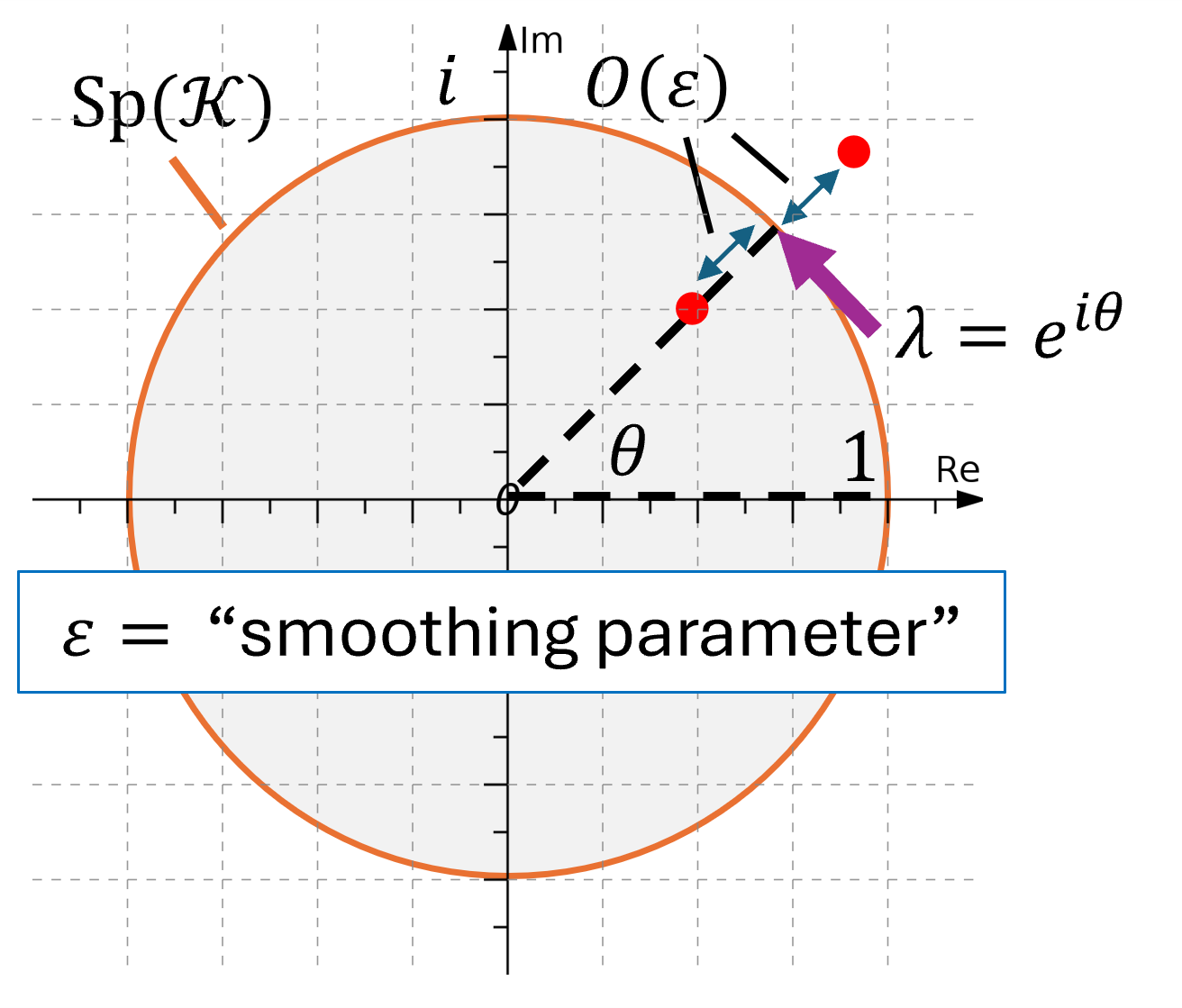}}\hspace{10mm}
\raisebox{-0.5\height}{\includegraphics[width=0.45\textwidth]{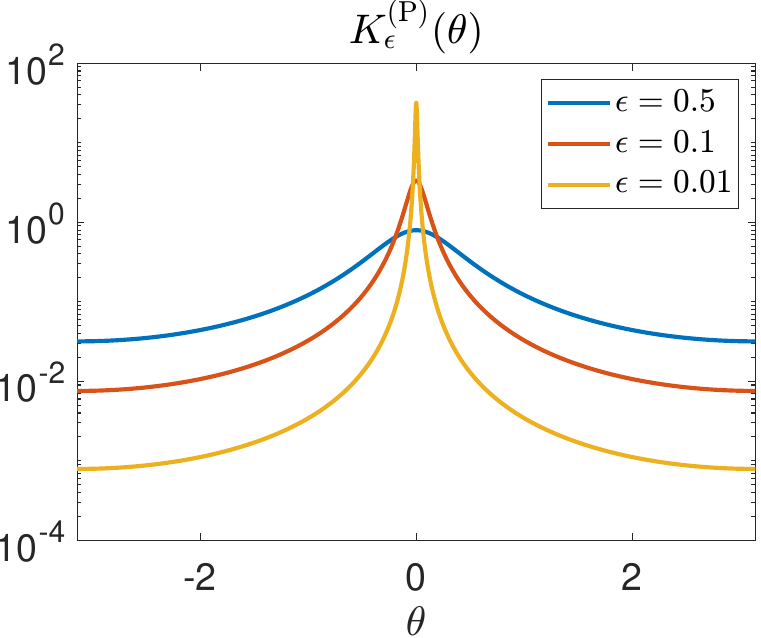}}
\caption{The essence of Stone's formula. Left: The points $z$ marked with {\color[rgb]{1,0,0}{$\bullet$}} are  where we evaluate $(\K-zI)^{-1}$, at a distance $\epsilon$ and $1-1/(1+\epsilon)$ from the unit circle. Right: The Poisson kernel for the unit disc. As $\epsilon\downarrow 0$, the Poisson kernel converges in the sense of distributions to a Dirac delta distribution.}
\label{fig:stone}
\end{figure}

In practice, the sublinear convergence rates associated with the Poisson kernel present a computational bottleneck. As noted in \cite[Figure 10]{colbrook2021rigorous}, as $\epsilon\downarrow 0$, the computational cost typically increases because a larger dictionary of observables is required to compute the resolvent $(\K-zI)^{-1}$ and, consequently, a larger collection of snapshot data is required. Therefore, keeping $\epsilon$ from becoming too small is crucial.

\subsection{High-order kernels}
\label{sec:high_order_kernels}

To alleviate the computational burden associated with slow convergence in the smoothing parameter $\epsilon$, we can replace the Poisson kernel with generalizations that achieve faster convergence rates as $\epsilon\downarrow 0$. Building on \cite{colbrook2021rigorous}, we now use Carathéodory functions to develop a general, streamlined framework for high-order convolution kernels on the unit circle. Our starting point is defining an $m$th order \textit{periodic kernel}.

\begin{definition}[$m$th order periodic kernel]
\label{def:unit_mth_kern_def}
Let $m\in\mathbb{N}$ and $\left\{K_{\epsilon}^\mathbb{T}:0<\epsilon\leq 1\right\}=\{K_{\epsilon}^\mathbb{T}\}$ be a family of continuous functions on the periodic interval $\pp$. We say that $\{K_{\epsilon}^\mathbb{T}\}$ is an $m$th order kernel for $\pp$ if it satisfies the following three properties:
\begin{itemize}
	\item[(i)] Normalized: $\int_{-\pi}^{\pi} K_{\epsilon}^\mathbb{T}(\theta)\dd\theta=1$.
	\item[(ii)] Approximately constant Fourier coefficients: There exists a constant $C_K$ such that for any integer $n=1,\ldots, m-1$,
\begin{equation}\label{eq:fourier_cond}
\left|\int_{-\pi}^{\pi}K_{\epsilon}^\mathbb{T}(-\theta)e^{in\theta}\dd\theta-1\right|\leq C_K \epsilon^m \llog.
\end{equation}
	\item[(iii)] Concentration around zero: For any $\theta\in\pp$ and $0<\epsilon\leq 1$,
	\begin{equation}\label{def:unitary_decay}
	\left|K_{\epsilon}^\mathbb{T}(\theta)\right|\leq \frac{C_K\epsilon^m}{(\epsilon+|\theta|)^{m+1}}.
	\end{equation}
\end{itemize}
\end{definition}

The name of this class of kernels is justified in \cref{sec:convergence_theorems}, where we provide high-order convergence theorems. We consider continuous kernels since this implies that for any complex Borel measure $\xi$ on $\pp$, the function
$$
\pp\ni\varphi\mapsto \int_{\pp}K_{\epsilon}^\mathbb{T}(\varphi-\theta)\dd\xi(\theta)=\int_{\pp}K_{\epsilon}^\mathbb{T}(\theta)\dd\xi(\varphi-\theta)
$$
is continuous. While the Poisson kernel for the unit disc in~\cref{eq:poisson_kernel} is a first-order kernel, our goal is to build high-order kernels that can be expressed in terms of the Carathéodory function. We construct these from high-order rational kernels designed for the \textit{real line} $\mathbb{R}$~\cite{colbrook2021computing}.

\begin{definition}[$m$th order kernel for $\mathbb{R}$]
\label{def:mth_order_kernel}
Let $m\in\mathbb{N}$. We say that a continuous bounded function $K$ on $\mathbb{R}$ is an $m$th order kernel (for $\mathbb{R}$) if it satisfies the following three properties:
\begin{itemize}
	\item[(i)] Normalized: $K$ is integrable and $\int_{\mathbb{R}}K(x)\dd x=1$.
	\item[(ii)]  Zero moments: $K(x)x^j$ is integrable and $\int_{\mathbb{R}}K(x)x^j\dd x=0$ for $j=1,\ldots,m-1$.
	\item[(iii)] Decay at $\pm\infty$: There is a constant $C_K$, independent of $x$, such that
	\begin{equation}\label{eq:decay_bound}
	\left|K(x)\right|\leq \frac{C_K}{(1+\left|x\right|)^{m+1}} \quad \forall x\in \mathbb{R}.
	\end{equation}
\end{itemize}
\end{definition}

The connection between \cref{def:unit_mth_kern_def} and \cref{def:mth_order_kernel} is the following proposition.

\begin{proposition}[Periodic summation of kernels]
\label{prop:per_summation}
Let $K$ be an $m$th order kernel for $\mathbb{R}$ and
$$
K_\epsilon^{\mathbb{T}}(\theta)=\sum_{n\in\mathbb{Z}} K_\epsilon(\theta+2\pi n)=\frac{1}{\epsilon}\sum_{n\in\mathbb{Z}} K\left(\frac{\theta+2\pi n}{\epsilon}\right), \quad 0<\epsilon\leq 1,
$$
be its periodic summation. Then $\{K_{\epsilon}^\mathbb{T}\}$ is an $m$th order kernel for $\pp$.
\end{proposition}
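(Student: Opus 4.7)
The plan is to verify the three defining properties in \cref{def:unit_mth_kern_def} in turn. All three arguments rely on unfolding the periodization via the substitution $u = \theta + 2\pi n$ (or $u = -\theta + 2\pi k$), which converts a sum of integrals over $\pp$ into a single integral over $\mathbb{R}$; the interchange of sum and integral is justified throughout by the integrable envelope $|K_\epsilon(x)| \leq C_K \epsilon^m/(\epsilon+|x|)^{m+1}$ inherited from \cref{eq:decay_bound}.

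For (i), this unfolding immediately gives $\int_{-\pi}^{\pi} K_\epsilon^{\mathbb{T}}(\theta)\dd\theta = \int_{\mathbb{R}} K_\epsilon(u)\dd u = \int_{\mathbb{R}} K(y)\dd y = 1$ by \cref{def:mth_order_kernel}(i). For (iii), I would bound $|K_\epsilon^{\mathbb{T}}(\theta)|$ term-by-term. The $n=0$ summand is exactly $C_K \epsilon^m/(\epsilon+|\theta|)^{m+1}$, the desired main term. For $n \neq 0$, the shift satisfies $|\theta + 2\pi n| \geq \pi(2|n|-1) \geq \pi$, so $\sum_{n\neq 0} \epsilon^m/(\epsilon + |\theta+2\pi n|)^{m+1} = O(\epsilon^m)$ by comparison with $\sum_{n\geq 1} n^{-(m+1)}$. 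Since $\theta \in \pp$ and $\epsilon \leq 1$ imply $(\epsilon+|\theta|)^{m+1} \leq (1+\pi)^{m+1}$, this $O(\epsilon^m)$ tail can be absorbed into the main term at the cost of enlarging the constant.

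The main work is (ii). The same unfolding, combined with $e^{2\pi i n k} = 1$ for $n,k\in\mathbb{Z}$, yields the Poisson-summation-style identity
\begin{equation*}
\int_{-\pi}^{\pi} K_\epsilon^{\mathbb{T}}(-\theta)e^{in\theta}\dd\theta \;=\; \int_{\mathbb{R}} K_\epsilon(u)e^{-inu}\dd u \;=\; \int_{\mathbb{R}} K(y) e^{-in\epsilon y}\dd y.
\end{equation*}
The normalization and zero-moment conditions in \cref{def:mth_order_kernel}(i)--(ii) ensure that the degree-$(m-1)$ Taylor polynomial of $e^{-in\epsilon y}$ contributes exactly $1$ against $K(y)$, so proving \cref{eq:fourier_cond} reduces to estimating the remainder integral $\int_{\mathbb{R}} K(y) R_m(y)\dd y$ with $R_m(y) = e^{-in\epsilon y} - \sum_{j=0}^{m-1}(-in\epsilon y)^j/j!$.

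The obstacle, and the source of the $\llog$ factor in \cref{eq:fourier_cond}, is that $K(y)y^m$ is borderline non-integrable under the decay \cref{eq:decay_bound}, so one cannot just use the pointwise Taylor bound globally. I plan to split the remainder integral at $|y| = 1/\epsilon$. On $\{|y|\leq 1/\epsilon\}$ I use the sharp bound $|R_m(y)| \leq (n\epsilon|y|)^m/m!$; the resulting integral $\int_{|y|\leq 1/\epsilon}|y|^m/(1+|y|)^{m+1}\dd y$ is $O(\llog)$ because the integrand decays like $1/|y|$ for large $|y|$, producing the $\epsilon^m\llog$ contribution. On $\{|y|>1/\epsilon\}$ I use the triangle-inequality bound $|R_m(y)| \leq C_m(1+|n\epsilon y|^{m-1})$ on the Taylor sum, after which the tail $\int_{|y|>1/\epsilon}|y|^{m-1}/(1+|y|)^{m+1}\dd y$ is $O(\epsilon)$, giving an overall $O(\epsilon^m)$ contribution. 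Summing the two pieces and using $1 \leq n \leq m-1$ to absorb $n$-dependent factors into a new constant yields the estimate \cref{eq:fourier_cond}.
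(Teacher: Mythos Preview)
Your argument is correct and matches the paper's proof for (i) and (iii) essentially line for line. For (ii), the paper rewrites the left side as $[K_\epsilon\ast\rho](0)$ with $\rho(x)=e^{inx}-1$ and then appeals to an external pointwise-convergence theorem for $m$th-order kernels on $\mathbb{R}$, whereas you carry out the underlying Taylor-expansion-plus-split estimate directly; the content is the same, and your version has the advantage of being self-contained.
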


\begin{proof}
See \cref{sec:periodic_kernels_app}.
\end{proof}

Suppose now that $K$ is an $m$th order kernel for $\mathbb{R}$ of the form
\begin{equation}
\label{eq:rat_kern_R}
K(x)=\frac{1}{2\pi i}\sum_{j=1}^{m}\left(\frac{\alpha_j}{x-a_j}-\frac{\beta_j}{x-b_j}\right),
\end{equation}
where $a_1,\ldots,a_{m}$ are any distinct points in the upper half-plane and $b_1,\ldots,b_{m}$ are any distinct points in the lower half-plane. In \cite{colbrook2021computing}, it is shown that $K$ is an $m$th order kernel for $\mathbb{R}$ if
\begin{equation}\label{eq:vandermonde_condition}
\begin{pmatrix}
1 & \dots & 1 \\
a_1 & \dots & a_{m} \\
\vdots & \ddots & \vdots \\
a_1^{m-1} &  \dots & a_{m}^{m-1}
\end{pmatrix}
\begin{pmatrix}
\alpha_1 \\ \alpha_2\\ \vdots \\ \alpha_{m}
\end{pmatrix}\!
=
\begin{pmatrix}
1 & \dots & 1 \\
b_1 & \dots & b_{m} \\
\vdots & \ddots & \vdots \\
b_1^{m-1} &  \dots & b_{m}^{m-1}
\end{pmatrix}
\begin{pmatrix}
\beta_1 \\ \beta_2\\ \vdots \\ \beta_{m}
\end{pmatrix}
=\begin{pmatrix}
1 \\ 0 \\ \vdots \\0
\end{pmatrix}.
\end{equation}
The following proposition expresses the periodic summation of these kernels in terms of the Carathéodory function. This result provides the basis for stage B in \cref{alg:RiggedDMD}.

\begin{proposition}
\label{prop:carath_form}
Let $\{K_{\epsilon}^\mathbb{T}\}$ be the periodic summation (\cref{prop:per_summation}) of the $m$th order kernel for $\mathbb{R}$ in \cref{eq:rat_kern_R}, where the residues $\{\alpha_j,\beta_j\}$ satisfy \cref{eq:vandermonde_condition}. Then
\begin{equation}
\label{carathform1}
\left[K_\epsilon^{\mathbb{T}}\ast \mathcal{E}\right](\theta)=\frac{-1}{4\pi}\sum_{j=1}^{m}\left(\alpha_jF_\mathcal{E}(e^{i\theta-i\epsilon a_j})-\beta_jF_\mathcal{E}(e^{i\theta-i\epsilon b_j})\right).
\end{equation}
Moreover,  if $b_j = \overline{a_j}$ and $\beta_j=\overline{\alpha_j}$, then
\begin{align*}
\left[K_\epsilon^{\mathbb{T}}\ast\xi_g\right](\theta_0)&=\frac{-1}{2\pi}\sum_{j=1}^{m}{\rm Re}\left(\alpha_j \left\langle (\K-e^{i\theta-i\epsilon a_j}I)^{-1}g,(\K+e^{i\theta-i\epsilon a_j}I)^*g\right\rangle  \right).
\end{align*}
\end{proposition}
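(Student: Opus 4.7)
The plan is to turn the periodic summation of the partial-fraction kernel \cref{eq:rat_kern_R} into a sum of Carath\'eodory-function evaluations via the Mittag-Leffler expansion for cotangent, and then to fold conjugate pairs into real parts for the scalar-measure formula.

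Starting from \cref{prop:per_summation}, I substitute \cref{eq:rat_kern_R} into
\[
K_\epsilon^{\mathbb{T}}(\theta) \;=\; \frac{1}{\epsilon}\sum_{n\in\mathbb{Z}} K\!\left(\tfrac{\theta+2\pi n}{\epsilon}\right)
\]
and, after interchanging the (symmetric) sums over $j$ and $n$, obtain
\[
K_\epsilon^{\mathbb{T}}(\theta) \;=\; \frac{1}{2\pi i}\sum_{j=1}^{m}\!\left[\alpha_j \!\sum_{n\in\mathbb{Z}}\!\frac{1}{\theta - \epsilon a_j + 2\pi n} \,-\, \beta_j\!\sum_{n\in\mathbb{Z}}\!\frac{1}{\theta - \epsilon b_j + 2\pi n}\right].
\]
Each inner sum equals $\tfrac{1}{2}\cot\!\big((\theta - \epsilon a_j)/2\big)$ (resp.\ $\tfrac{1}{2}\cot((\theta - \epsilon b_j)/2)$) by the classical cotangent Mittag-Leffler expansion; the complex shifts ${\rm Im}(\epsilon a_j)>0$ and ${\rm Im}(\epsilon b_j)<0$ keep the cotangent analytic on the real $\theta$-axis. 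Using $\cot(x/2) = i(e^{ix}+1)/(e^{ix}-1)$, a short manipulation gives
\[
-i\cot\!\big((\theta - \varphi - \epsilon a_j)/2\big) \;=\; -\frac{e^{i\varphi} + e^{i\theta - i\epsilon a_j}}{e^{i\varphi} - e^{i\theta - i\epsilon a_j}},
\]
which is the negative of the integrand in $F_\mathcal{E}(e^{i\theta - i\epsilon a_j})$. Integrating $K_\epsilon^{\mathbb{T}}(\theta-\varphi)$ against $\mathrm{d}\mathcal{E}(\varphi)$ and tracking signs then yields \cref{carathform1}.

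For the scalar formula, set $z = e^{i\theta - i\epsilon a_j}$ (so $|z|<1$ because ${\rm Im}(a_j)>0$), and note that with $b_j = \overline{a_j}$ we have $e^{i\theta - i\epsilon b_j} = 1/\overline{z}$. A short computation—either from $\mathcal{K}^* = \mathcal{K}^{-1}$ or directly from the spectral theorem using $(\int f\,\mathrm{d}\mathcal{E})^* = \int \overline{f}\,\mathrm{d}\mathcal{E}$—gives the symmetry
\[
F_\mathcal{E}(1/\overline{z}) = -F_\mathcal{E}(z)^*.
\]
With $\beta_j = \overline{\alpha_j}$, each $j$-pair in \cref{carathform1} therefore collapses to $\alpha_j F_\mathcal{E}(z) + \overline{\alpha_j}\,F_\mathcal{E}(z)^*$. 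Taking $\langle\,\cdot\, g,g\rangle$ and rewriting $\langle F_\mathcal{E}(z)g,g\rangle = \langle(\mathcal{K}-zI)^{-1}g,(\mathcal{K}+zI)^*g\rangle$ produces the announced $2\,{\rm Re}(\cdot)$ expression.

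The main subtlety is that a single inner $n$-sum converges only conditionally, so both the Mittag-Leffler step and the interchange of the $j$- and $n$-sums must be interpreted symmetrically as $\lim_{N\to\infty}\sum_{n=-N}^{N}$. This is consistent with \cref{prop:per_summation}, which independently guarantees the absolute convergence of the full doubly-indexed sum under the $\mathcal{O}(|x|^{-(m+1)})$ decay of $K$; equivalently, for each fixed $j$ one may first combine the two partial-fraction pieces (so the $\mathcal{O}(|x|^{-2})$ cancellation at infinity kicks in) before performing the $n$-sum, after which all manipulations become absolutely convergent.
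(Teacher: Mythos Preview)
Your proposal is correct and follows essentially the same route as the paper: periodic summation of the partial fractions, the Mittag--Leffler cotangent identity $\sum_{n\in\mathbb{Z}}(z+2\pi n)^{-1}=\tfrac12\cot(z/2)$, the exponential rewriting of $\cot$, and then the conjugation symmetry of the Carath\'eodory function to collapse the $b_j=\overline{a_j}$ terms into a real part. Your treatment of the conditional convergence of the individual $n$-sums is in fact more careful than the paper's one-line ``justified by absolute convergence'' remark, which tacitly relies on the same pairing-before-summing that you make explicit.
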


\begin{proof}
See \cref{sec:periodic_kernels_app}.
\end{proof}

For computations, we use equally spaced poles in $[-1,1]$ with
\begin{equation}\label{eq:equi_poles}
a_j=\frac{2j}{m+1}-1+i, \quad b_j=\overline{a_j}, \quad 1\leq j\leq m.
\end{equation}
We determine $\alpha_j=\overline{\beta_j}$ by solving \cref{eq:vandermonde_condition}. \cref{fig:kernels} shows these kernels for $m=1,\ldots,6$.

\begin{figure}
\centering
\includegraphics[width=0.45\textwidth]{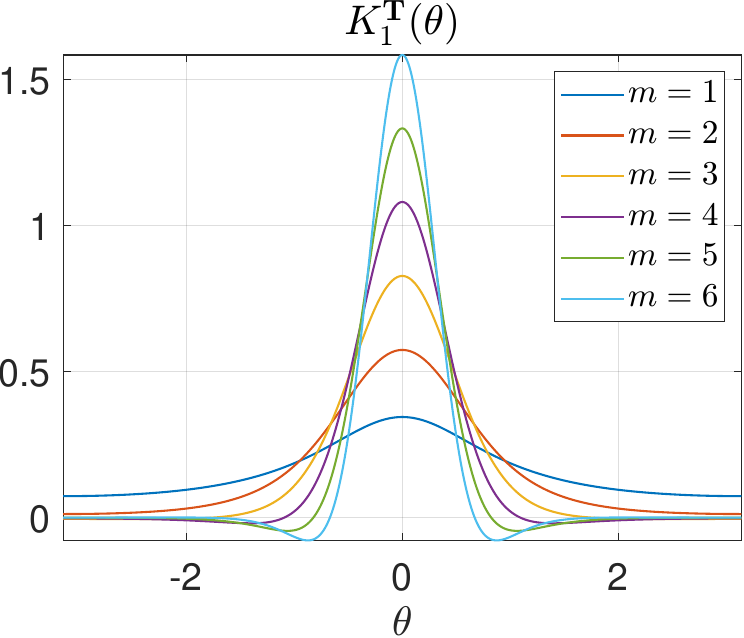}\hfill
\includegraphics[width=0.45\textwidth]{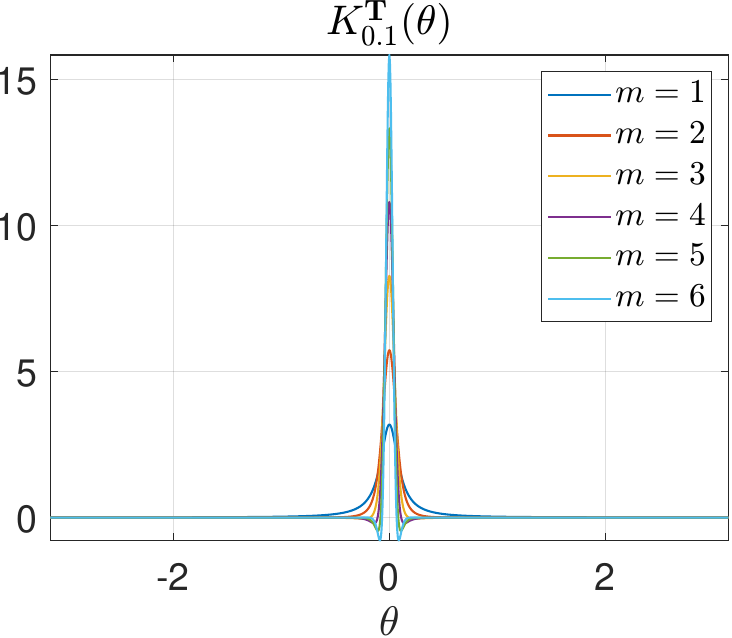}
 \caption{The $m$th order kernels on $\pp$ constructed via periodic summation of rational kernels with equispaced poles in $[-1,1]$. They localize around the origin as $\epsilon\downarrow 0$, approximating Dirac's delta measure.\label{fig:kernels}}
\end{figure}

\subsection{Convergence theorems}
\label{sec:convergence_theorems}

Returning to the generalized eigenfunction expansion in \cref{eq:rigged_expansion}, the $m$th-order kernel $K_\epsilon^{\mathbb{T}}$ induces a smoothed approximation of the KMD:
\begin{equation}
\label{eq:smoothedKMD}
\begin{split}
g_\theta^\epsilon=\left[K_\epsilon^{\mathbb{T}}\ast \mathcal{E}\right](\theta)g&=
\sum_{k\in \mathcal{J}}\int_\pp  \underbrace{K_\epsilon^{\mathbb{T}}(\theta-\varphi)}_{\text{sharp peak at $\theta$}}\,\,\underbrace{{\langle g_{\varphi,k}^*|g\rangle}}_{\text{K. mode}}\,\, \underbrace{g_{\varphi,k}}_{\text{gen. efun.}}\dd\xi_{g_k}(\varphi)\quad \forall g\in\mathcal{S}.
\end{split}
\end{equation}
The observable on the left-hand side is a wave-packet approximation of a generalized eigenfunction corresponding to the spectral parameter $\lambda=\exp(i\theta)$, as outlined in \cref{eqn:wave_packet}. Unlike Poisson wave packets, these wave packets are capable of exploiting smoothness in the spectral measures of $\mathcal{K}$ to provide high-order accurate approximations to the Koopman modes and generalized eigenfunctions associated with the continuous spectrum of $\K$. Beyond improved convergence, there are as least two other advantages of high-order kernels \cite[Section 5.1]{colbrook2021rigorous}: an improved stability to noise with suitably selected $\epsilon$; and improved localization of the kernel.

Throughout this subsection, we assume that $\mathcal{K}$ is unitary on a rigged Hilbert space $\mathcal{S}\subset L^2(\Omega,\omega)\subset\mathcal{S}^*$ with $\mathcal{K}\mathcal{S}\subset\mathcal{S}$. We use the notation $a\lesssim b$ to mean that $a\leq Cb$, where $C$ is a constant that can be made explicit and depends only on the constant $C_K$ in \cref{def:unit_mth_kern_def}, the order of the kernel $m$, and parameters $n+\alpha$ when considering the H\"older space $\mathcal{C}^{n,\alpha}(\mathcal{I})$.

\subsubsection{Convergence of generalized eigenfunctions}

The wave-packet approximations converge to generalized eigenfunctions of the Koopman operator in the sense of distributions on $\mathcal{S}$. This means that the \textit{approximate Koopman modes converge}. When the associated spectral measures are smooth, we can also characterize the rate of convergence. Given a complex Borel measure $\xi$ with finite variation, we let $\|\xi\|$ denote its total variation norm.

\begin{theorem}[Convergence of Koopman modes]\label{thm:conv_rates}
Let $\{K_{\epsilon}^\mathbb{T}\}$ be an $m$th order kernel for $\pp$. Suppose that for some $\theta_0\in\pp$ and $\eta\in(0,\pi)$, the spectrum of $\mathcal{K}$ is absolutely continuous on the closed interval $\mathcal{I}=[\theta_0-\eta,\theta_0+\eta]$. Given $g,\phi\in\mathcal{S}$, let $\rho_{g,\phi}$ be the Radon--Nikodym derivative of the absolutely continuous component of $\xi_{g,\phi}$, and suppose that $\rho_{g,\phi}\in\mathcal{C}^{n,\alpha}(\mathcal{I})$ with $n\in\mathbb{N}\cup\{0\}$ and $\alpha\in[0,1]$. Denoting $g_\theta^\epsilon=\smash{\left[K_\epsilon^{\mathbb{T}}\ast \mathcal{E}\right](\theta)g}$, it holds that
$$
\left|\langle g_{\theta_0}^\epsilon|\overline{\phi}\rangle {-} \rho_g(\theta_0)\langle u_{\theta_0} | \overline{\phi}\rangle\right| \!\lesssim\!
\frac{\epsilon^m\|\xi_{g,\phi}\|}{(\epsilon+\eta)^{m+1}}{+}\begin{cases}
\|\rho_{g,\phi}\|_{\mathcal{C}^{n,\alpha}(\mathcal{I})}(1+\eta^{-(n+\alpha)})\epsilon^{n+\alpha},\quad&\text{if }n+\alpha<m,\\
\|\rho_{g,\phi}\|_{\mathcal{C}^{m}(\mathcal{I})}(1+\eta^{-m})\epsilon^{m}\llog,\quad &\text{if }n+\alpha\geq m.
\end{cases}
$$
Here, $u_\theta\in\mathcal{S}^*$ is a generalized eigenfunction of $\mathcal{K}$ satisfying~\cref{eqn:def_geneig} with $\lambda=\exp(i\theta)$.
\end{theorem}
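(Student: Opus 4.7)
The first step is to rewrite $\langle g_{\theta_0}^\epsilon \mid \overline{\phi}\rangle$ as a kernel-weighted integral against the complex scalar spectral measure $\xi_{g,\phi}$. Since $g_{\theta_0}^\epsilon = [K_\epsilon^{\mathbb{T}} \ast \mathcal{E}](\theta_0)\, g \in L^2(\Omega,\omega)$, the spectral theorem and the duality pairing convention yield
\begin{equation*}
\langle g_{\theta_0}^\epsilon \mid \overline{\phi}\rangle = \langle g_{\theta_0}^\epsilon, \phi\rangle = \int_{\pp} K_\epsilon^{\mathbb{T}}(\theta_0 - \varphi)\, \mathrm{d}\xi_{g,\phi}(\varphi).
\end{equation*}
Applying the diagonalization of $\mathcal{E}$ in~\cref{eq:diag_pvm} and collecting the multiplicity sum into a generalized eigenfunction $u_{\theta_0}$ with the natural normalization, the absolutely continuous density of $\xi_{g,\phi}$ at $\theta_0$ satisfies $\rho_{g,\phi}(\theta_0) = \rho_g(\theta_0)\langle u_{\theta_0} \mid \overline{\phi}\rangle$. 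The theorem thus reduces to estimating $|[K_\epsilon^{\mathbb{T}} \ast \xi_{g,\phi}](\theta_0) - \rho_{g,\phi}(\theta_0)|$.

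\textbf{Splitting the measure and the far-field estimate.} Decompose $\xi_{g,\phi} = \xi_{g,\phi}|_{\mathcal{I}} + \xi_{g,\phi}|_{\pp \setminus \mathcal{I}}$; by hypothesis, the first piece is purely absolutely continuous with density $\rho_{g,\phi}$. For the far-field contribution, $|\theta_0 - \varphi| \geq \eta$ for any $\varphi \notin \mathcal{I}$, so the concentration bound~\cref{def:unitary_decay} gives
\begin{equation*}
\left| \int_{\pp \setminus \mathcal{I}} K_\epsilon^{\mathbb{T}}(\theta_0 - \varphi)\, \mathrm{d}\xi_{g,\phi}(\varphi) \right| \lesssim \frac{\epsilon^m \|\xi_{g,\phi}\|}{(\epsilon + \eta)^{m+1}},
\end{equation*}
which is exactly the first term in the stated bound. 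The remaining task is to bound the localized integral $\int_{\mathcal{I}} K_\epsilon^{\mathbb{T}}(\theta_0 - \varphi) \rho_{g,\phi}(\varphi)\, \mathrm{d}\varphi - \rho_{g,\phi}(\theta_0)$.

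\textbf{Taylor expansion and effective moment vanishing.} Using the normalization property $\int_{\pp} K_\epsilon^{\mathbb{T}} = 1$ and Taylor-expanding $\rho_{g,\phi}$ about $\theta_0$ to degree $\min(n, m-1)$, the local error splits into polynomial moments $\int K_\epsilon^{\mathbb{T}}(\theta_0 - \varphi)(\varphi - \theta_0)^k\, \mathrm{d}\varphi$ for $k = 1, \ldots, \min(n,m-1)$ plus a Taylor remainder. The crucial observation is that~\cref{prop:per_summation} exhibits $K_\epsilon^{\mathbb{T}}$ as the periodic summation of an $m$th-order $\mathbb{R}$-kernel whose $\mathbb{R}$-moments vanish exactly up to degree $m-1$; the fundamental copy therefore annihilates these polynomial terms, while the translated copies contribute negligibly under the decay in~\cref{def:unitary_decay}. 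When $n + \alpha < m$, the Hölder remainder of size $|\varphi - \theta_0|^{n+\alpha}$ integrates against the kernel concentration profile to yield the $\epsilon^{n+\alpha}$ rate; when $n + \alpha \geq m$, truncating the Taylor expansion at degree $m-1$ leaves a remainder of size $|\varphi - \theta_0|^{m}$, and the direct computation
\begin{equation*}
\int_{-\eta}^{\eta} \frac{\epsilon^m |\theta|^m}{(\epsilon + |\theta|)^{m+1}}\, \mathrm{d}\theta \lesssim \epsilon^m \log(1 + \epsilon^{-1})
\end{equation*}
produces the claimed logarithmic factor. The $(1 + \eta^{-(n+\alpha)})$ (respectively $(1 + \eta^{-m})$) prefactor arises from the boundary treatment: to apply the kernel-moment identity on all of $\pp$ one extends $\rho_{g,\phi}$ across $\partial \mathcal{I}$ using a cutoff, at a Hölder-norm cost proportional to $\eta^{-(n+\alpha)}$.

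\textbf{Main obstacle.} The principal technical difficulty lies in the third step, namely translating the Fourier-coefficient condition~\cref{eq:fourier_cond} into an effective moment-vanishing statement usable for Taylor remainder analysis on the periodic domain $\pp$. The cleanest route, as sketched above, factors through \cref{prop:per_summation}: the genuine moment vanishing of the underlying $\mathbb{R}$-kernel (guaranteed by \cref{def:mth_order_kernel}(ii)) supplies the cancellation, and the sum over periodic translates is controlled pointwise by~\cref{def:unitary_decay}. A secondary challenge is ensuring that the boundary $\eta$-dependence is tracked uniformly—rather than merely as a qualitative convergence statement—in order to match the explicit form of the bound with the $\eta^{-(n+\alpha)}$ and $\eta^{-m}$ factors.
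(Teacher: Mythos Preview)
Your reduction step is exactly the paper's argument: rewrite $\langle g_{\theta_0}^\epsilon\mid\overline{\phi}\rangle$ as $[K_\epsilon^{\mathbb{T}}\ast\xi_{g,\phi}](\theta_0)$, identify $\rho_{g,\phi}(\theta_0)=\rho_g(\theta_0)\langle u_{\theta_0}\mid\overline{\phi}\rangle$ via the rigged-space diagonalization, and thereby reduce the whole theorem to a pointwise convolution estimate for the complex scalar measure $\xi_{g,\phi}$. At that point the paper simply \emph{invokes} \cref{thm:unitary_pointwise_convergence} (which is quoted from \cite{colbrook2021rigorous}) and is done. You instead attempt to reprove \cref{thm:unitary_pointwise_convergence} from scratch, which is where the comparison becomes substantive.

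Your far-field estimate is fine, but the near-field argument has a genuine gap. You write that ``\cref{prop:per_summation} exhibits $K_\epsilon^{\mathbb{T}}$ as the periodic summation of an $m$th-order $\mathbb{R}$-kernel,'' and then use the exact polynomial-moment vanishing of that underlying $\mathbb{R}$-kernel. This misreads \cref{prop:per_summation}: it is a one-way implication (periodic summation of an $\mathbb{R}$-kernel \emph{yields} a periodic kernel), not a representation theorem. The statement of \cref{thm:conv_rates} assumes only that $\{K_\epsilon^{\mathbb{T}}\}$ satisfies \cref{def:unit_mth_kern_def}, and such a family need not arise from any $\mathbb{R}$-kernel. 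Consequently you have no polynomial-moment identity available; all you have is the approximate Fourier-coefficient condition~\cref{eq:fourier_cond}, i.e., $\int K_\epsilon^{\mathbb{T}}(-\theta)e^{in\theta}\dd\theta=1+\mathcal{O}(\epsilon^m\llog)$ for $n=1,\dots,m-1$. A correct direct argument must work with this trigonometric condition---for instance by expanding the (suitably extended) density $\rho_{g,\phi}$ in exponentials $e^{ik(\varphi-\theta_0)}$ rather than in monomials $(\varphi-\theta_0)^k$, or by passing from the Fourier condition to an approximate polynomial-moment statement with the same $\mathcal{O}(\epsilon^m\llog)$ error. Your outline does flag this as the ``main obstacle,'' but the resolution you propose (factoring through \cref{prop:per_summation}) is not valid at the stated level of generality.
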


\begin{proof}
See~\cref{sec:conv_WPA_app}.
\end{proof}

The generalized eigenfunctions of $\mathcal{K}$ can often be identified with genuine functions on $\Omega$ that are smooth apart from certain singular features that prevent them from being in $L^2(\Omega,\omega)$. We now provide sufficient conditions for locally uniform convergence and convergence rates for wave packets computed with an $m$th order kernel. Given a measure $\xi$ on an interval $\mathcal{I}\subset\pp$ and a compact set $\mathcal{A}\subset\Omega$, we say that a function $u:\mathcal{A}\times \mathcal{I}\rightarrow \mathbb{C}$ is $\mathcal{A}$-uniformly $\xi$-integrable if $\int_\mathcal{I} |u(\xv,\theta)|\dd\xi(\theta)$ is uniformly bounded on $\mathcal{A}$. Similarly, we say that $u(\xv,\cdot)\in\mathcal{C}^{n,\alpha}(\mathcal{I})$ is $\mathcal{A}$-uniformly $(n,\alpha)$-H\"older if $\|u(\xv,\cdot)\|_{\mathcal{C}^{n,\alpha}(\mathcal{I})}$ is uniformly bounded on $\mathcal{A}$.

\begin{theorem}[Pointwise convergence of generalized eigenfunctions]
\label{thm:conv_rates2}
Suppose that the hypotheses of~\cref{thm:conv_rates} hold and let $\mathcal{A}\subset\Omega$ be compact. If $\rho_g u_\theta(\xv)$ is $\mathcal{A}$-uniformly $\xi_g$-integrable and $\rho_g(\theta) u_\theta(\xv)$ is continuous on $\mathcal{A}\times\mathcal{I}$, then
$$
\sup_{\xv\in\mathcal{A}}\left| g_{\theta_0}^\epsilon(\xv) - \rho_g(\theta_0) u_{\theta_0}(\xv)\right| \rightarrow 0 \qquad\text{as}\qquad \epsilon\downarrow 0.
$$
Moreover, if $\rho_g u_\theta(\xv)$ is $\mathcal{A}$-uniformly $(n,\alpha)$-H\"older with $n\in\mathbb{N}\cup\{0\}$ and $\alpha\in[0,1]$, then 
$$
\sup_{\xv\in\mathcal{A}}\left| g_{\theta_0}^\epsilon(\xv) - \rho_g(\theta_0) u_{\theta_0}(\xv)\right|=\mathcal{O}(\epsilon^m\llog) + \mathcal{O}(\epsilon^{n+\alpha}) \qquad \text{as}\qquad \epsilon\downarrow 0.
$$ 
\end{theorem}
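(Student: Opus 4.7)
The plan is to reduce \cref{thm:conv_rates2} to the strategy of \cref{thm:conv_rates} applied pointwise in $\xv$, by writing the pointwise value of the wave packet as a convolution of the kernel $K_\epsilon^\mathbb{T}$ against a $\varphi$-dependent amplitude. Starting from \cref{eq:smoothedKMD} and using the absolute continuity of the spectral data on $\mathcal{I}$, I would decompose
$$
g_{\theta_0}^\epsilon(\xv) = \int_\mathcal{I} K_\epsilon^\mathbb{T}(\theta_0-\varphi)\, \rho_g(\varphi)\, u_\varphi(\xv)\, d\varphi + R_\epsilon(\xv),
$$
where $R_\epsilon(\xv)$ gathers the contributions from $\pp \setminus \mathcal{I}$ (possibly including singular parts of the spectral measures). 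Since $|\theta_0 - \varphi| \geq \eta$ on $\pp \setminus \mathcal{I}$, the concentration bound \cref{def:unitary_decay} gives $|K_\epsilon^\mathbb{T}(\theta_0 - \varphi)| \lesssim \epsilon^m/\eta^{m+1}$ there, so $R_\epsilon(\xv) = \mathcal{O}(\epsilon^m)$ uniformly on $\mathcal{A}$, and this term is absorbed into the stated error in both claims.

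For the first assertion (continuity only), the dominant local term is the convolution of $K_\epsilon^\mathbb{T}$ with the function $(\varphi, \xv) \mapsto \rho_g(\varphi) u_\varphi(\xv)$, which is continuous on the compact set $\mathcal{A} \times \mathcal{I}$ and therefore equicontinuous in $\varphi$ uniformly in $\xv \in \mathcal{A}$ by Heine--Cantor. Combining the normalization $\int K_\epsilon^\mathbb{T} = 1$ with the concentration bound \cref{def:unitary_decay} shows that $K_\epsilon^\mathbb{T}$ is an approximation of the identity at the interior point $\theta_0$, and a standard mollifier argument yields uniform convergence to $\rho_g(\theta_0) u_{\theta_0}(\xv)$ on $\mathcal{A}$.

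For the H\"older rate, I would mimic the proof of \cref{thm:conv_rates} pointwise in $\xv$: Taylor-expand $\varphi \mapsto \rho_g(\varphi) u_\varphi(\xv)$ around $\varphi = \theta_0$ to order $n$ with a remainder controlled by the $\mathcal{A}$-uniform $(n,\alpha)$-H\"older norm. The vanishing-moment condition \cref{eq:fourier_cond} in \cref{def:unit_mth_kern_def} annihilates the polynomial terms up to an $\mathcal{O}(\epsilon^m \log(1+\epsilon^{-1}))$ error coming from the periodized kernel, while the H\"older remainder integrated against $|K_\epsilon^\mathbb{T}|$ contributes $\mathcal{O}(\epsilon^{n+\alpha})$ by the decay \cref{def:unitary_decay}; uniformity in $\xv \in \mathcal{A}$ follows from the $\mathcal{A}$-uniform regularity hypothesis.

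The main obstacle I anticipate is the clean control of $R_\epsilon(\xv)$ pointwise: the $\mathcal{A}$-uniform $\xi_g$-integrability hypothesis is stated on $\mathcal{I}$ and does not directly control the contribution of the generalized eigenfunction expansion outside $\mathcal{I}$. I expect to circumvent this either by slightly enlarging $\mathcal{I}$ so the shrunken bad region still has kernel bound $\lesssim \epsilon^m/\eta^{m+1}$, or by exploiting that $g_{\theta_0}^\epsilon$ is constructed from the resolvent via \cref{prop:carath_form}, so that its pointwise evaluation on the compact $\mathcal{A}$ is well-defined and continuous in $\xv$ and can be controlled through the feature map. A related technical point is that the Taylor expansion is carried out on $\mathcal{I}$ rather than all of $\pp$, so boundary corrections $\int_{\pp \setminus \mathcal{I}} K_\epsilon^\mathbb{T}$ appear; each is $\mathcal{O}(\epsilon^m / \eta^{m+1})$ by \cref{def:unitary_decay} and is harmlessly absorbed.
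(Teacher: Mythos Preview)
Your approach is correct and follows the same underlying idea as the paper, but the paper's proof is much shorter: it simply observes that for each fixed $\xv\in\mathcal{A}$, the wave packet $g_{\theta_0}^\epsilon(\xv)=\int_\pp K_\epsilon^\mathbb{T}(\theta_0-\varphi)\,u_\varphi(\xv)\dd\xi_g(\varphi)$ is the convolution of $K_\epsilon^\mathbb{T}$ with a complex Borel measure on $\pp$, and then applies \cref{thm:unitary_pointwise_convergence} directly. The $\mathcal{A}$-uniform $\xi_g$-integrability hypothesis bounds the total variation $\|\xi\|$ in that theorem uniformly in $\xv$, and the continuity/H\"older hypotheses bound the relevant $\mathcal{C}^{n,\alpha}(\mathcal{I})$ norms of the Radon--Nikodym derivative $\rho_g(\varphi)u_\varphi(\xv)$ uniformly in $\xv$. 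What you propose---splitting into $\mathcal{I}$ versus $\pp\setminus\mathcal{I}$, Taylor expanding, and estimating the remainder via the moment and decay conditions---is precisely the content of \cref{thm:unitary_pointwise_convergence}, so you are re-deriving that result rather than invoking it.

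Your worry about $R_\epsilon(\xv)$ is exactly the term $\epsilon^m\|\xi\|/(\epsilon+\eta)^{m+1}$ in \cref{thm:unitary_pointwise_convergence}; the uniform integrability assumption is what makes this uniform over $\mathcal{A}$, so no enlargement of $\mathcal{I}$ or appeal to the resolvent formula is needed.
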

\begin{proof}
See~\cref{sec:conv_WPA_app}.
\end{proof}

\paragraph{Convergence results without the rigged Hilbert space structure}

Even without the rigged Hilbert space structure, two convergence theorems are of interest. The functions $([K_{\epsilon}^\mathbb{T}\ast \mathcal{E}](\theta_0))g$ form an approximate eigenvector sequence. This means that in the limit $\epsilon\downarrow 0$, the approximate coherency in \cref{approx_cohernecy} is satisfied for arbitrarily small $\delta>0$.

\begin{theorem}[Approximate eigenfunction convergence]
\label{prop:evc_approx_atom}
Let $\{K_{\epsilon}^\mathbb{T}\}$ be an $m$th order kernel for $\pp$, $\theta_0\in\pp$ and $g\in L^2(\Omega,\omega)$. Let $g_\theta^\epsilon=\smash{\left[K_\epsilon^{\mathbb{T}}\ast \mathcal{E}\right](\theta)g}$ and suppose that
\begin{equation}
\label{eq:concentration_approx_evec_needed}
\liminf_{\epsilon\downarrow 0}\inf_{|\theta|<\eta\epsilon}{\epsilon|K_\epsilon^\mathbb{T}(\theta)|}>0\text{ for some $\eta>0$}\quad\text{and}\quad
\liminf_{c\downarrow 0}\frac{\xi_g([\theta_0-c,\theta_0+c])}{c^2}> 0.
\end{equation}
Then
$
\lim_{\epsilon\downarrow 0}{\|(\K\!-\!e^{i\theta_0}I)g_{\theta_0}^\epsilon\|}/{\|g_{\theta_0}^\epsilon\|}\!=\!0,
$
i.e., $g_{\theta_0}^\epsilon/\|g_{\theta_0}^\epsilon\|$ is an approximate eigenvector sequence.
\end{theorem}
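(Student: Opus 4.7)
The plan is to apply the functional calculus for the unitary operator $\K$ to rewrite the ratio of squared norms as a ratio of scalar integrals against the spectral measure $\xi_g$:
$$
\frac{\|(\K-e^{i\theta_0}I)g_{\theta_0}^\epsilon\|^2}{\|g_{\theta_0}^\epsilon\|^2} \;=\; \frac{\int_\pp |e^{i\varphi}-e^{i\theta_0}|^2\,|K_\epsilon^\mathbb{T}(\theta_0-\varphi)|^2\,\mathrm{d}\xi_g(\varphi)}{\int_\pp |K_\epsilon^\mathbb{T}(\theta_0-\varphi)|^2\,\mathrm{d}\xi_g(\varphi)}.
$$
Since $|e^{i\varphi}-e^{i\theta_0}|^2\leq (\varphi-\theta_0)^2$, the numerator is essentially the second moment of $\varphi-\theta_0$ under the weight $|K_\epsilon^\mathbb{T}(\theta_0-\varphi)|^2\,\mathrm{d}\xi_g(\varphi)$. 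The goal is to show that this weight concentrates near $\theta_0$ fast enough that the weighted second moment vanishes relative to the total mass.

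The first step is a constant lower bound on the denominator. The first hypothesis yields $c'>0$ such that, for all sufficiently small $\epsilon$, $|K_\epsilon^\mathbb{T}(\theta_0-\varphi)|\geq c'/\epsilon$ whenever $|\varphi-\theta_0|<\eta\epsilon$. The quadratic nondegeneracy hypothesis on $\xi_g$ then gives $\xi_g([\theta_0-\eta\epsilon,\theta_0+\eta\epsilon])\gtrsim \epsilon^2$ for all small $\epsilon$. Restricting the denominator integral to this window and multiplying the two estimates, the factors of $\epsilon^{\pm 2}$ cancel exactly, yielding $\|g_{\theta_0}^\epsilon\|^2\geq c_0>0$ for all small $\epsilon$, independently of $\epsilon$.

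For the numerator, I would perform a near/far split at a scale $\delta=\delta(\epsilon)\downarrow 0$ to be chosen at the end. On $|\varphi-\theta_0|<\delta$, bound $|e^{i\varphi}-e^{i\theta_0}|^2\leq \delta^2$ and control the remaining integral by $\|g_{\theta_0}^\epsilon\|^2$, giving a near-field contribution of at most $\delta^2\|g_{\theta_0}^\epsilon\|^2$. On $|\varphi-\theta_0|\geq\delta$, the decay bound in Definition~\ref{def:unit_mth_kern_def}(iii) yields $|K_\epsilon^\mathbb{T}(\theta_0-\varphi)|^2\leq C_K^2\,\epsilon^{2m}/\delta^{2m+2}$, and $|e^{i\varphi}-e^{i\theta_0}|^2\leq 4$, so the far-field contribution is at most $4C_K^2\,\|g\|^2\,\epsilon^{2m}/\delta^{2m+2}$. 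Dividing by the denominator and using $\|g_{\theta_0}^\epsilon\|^2\geq c_0$,
$$
\frac{\|(\K-e^{i\theta_0}I)g_{\theta_0}^\epsilon\|^2}{\|g_{\theta_0}^\epsilon\|^2}\;\leq\; \delta^2 + \frac{4C_K^2\,\|g\|^2}{c_0}\cdot \frac{\epsilon^{2m}}{\delta^{2m+2}}.
$$
Picking $\delta=\epsilon^{a}$ with any $0<a<m/(m+1)$ drives both terms to zero as $\epsilon\downarrow 0$.

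The main obstacle is the delicate balance of the two standing hypotheses in the denominator estimate: the $\mathcal{O}(1/\epsilon)$ peak height of the kernel is exactly compensated by the $\mathcal{O}(\epsilon^2)$ mass of $\xi_g$ near $\theta_0$, producing a nontrivial constant lower bound. Neither hypothesis can be dispensed with: because $K_\epsilon^\mathbb{T}$ is not assumed nonnegative, property (iii) alone does not bound the denominator from below, and without the quadratic mass condition $\xi_g$ could be too light near $\theta_0$ to anchor the denominator. Once this lower bound is in hand, the remaining steps rely only on the crude inequalities $|e^{i\varphi}-e^{i\theta_0}|\leq |\varphi-\theta_0|$ and the decay bound of Definition~\ref{def:unit_mth_kern_def}(iii), and are routine.
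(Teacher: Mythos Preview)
Your proof is correct and follows essentially the same approach as the paper: both use the functional calculus to express the ratio as a quotient of integrals against $\xi_g$, obtain a positive lower bound on the denominator by combining the kernel peak estimate on the window $|\varphi-\theta_0|<\eta\epsilon$ with the quadratic mass hypothesis, and control the numerator via a near/far split using Definition~\ref{def:unit_mth_kern_def}(iii). The only cosmetic difference is that the paper keeps $\delta$ fixed, shows the $\limsup$ of the ratio is $\lesssim\delta^2$, and then lets $\delta\downarrow 0$, whereas you take $\delta=\epsilon^a$ with $a<m/(m+1)$ and send everything to zero in one pass; both are equivalent.
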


\begin{proof}
See \cref{sec:conv_WPA_app}.
\end{proof}

The two conditions in \cref{eq:concentration_approx_evec_needed} are natural. The first condition ensures that the kernel is uniformly concentrated around $\theta=0$ and is satisfied by any kernel constructed from periodic summation in the manner of \cref{prop:per_summation}. The second condition guarantees that $\xi_g$ maintains a suitable non-vanishing concentration near $\theta_0$. In particular, this is satisfied if $\xi_g$ is absolutely continuous at $\theta_0$ with a continuous, non-zero Radon--Nikodym derivative there.

The following result establishes that if $\lambda = e^{i\theta}$ is an eigenvalue of $\mathcal{K}$, the wave-packet approximations converge to an appropriate eigenfunction after rescaling.

\begin{theorem}[Eigenfunction convergence]
\label{prop:evc_atom}
Let $\{K_{\epsilon}^\mathbb{T}\}$ be an $m$th order kernel for $\pp$ with
$
\liminf_{\epsilon\downarrow 0}{\epsilon|K_\epsilon^\mathbb{T}(0)|}>0.
$
Let $\theta_0\in\pp$ and $g\in L^2(\Omega,\omega)$. Then
$$
\lim_{\epsilon\downarrow 0}\frac{1}{K_\epsilon^\mathbb{T}(0)}[K_{\epsilon}^\mathbb{T}\ast \mathcal{E}](\theta_0)g=\begin{cases}
\mathcal{P}_{e^{i\theta_0}} g,\quad&\text{if }e^{i\theta_0}\text{ is an eigenvalue of }\K,\\
0,\quad&\text{otherwise},
\end{cases}
$$
where the convergence is in $L^2(\Omega,\omega)$.
\end{theorem}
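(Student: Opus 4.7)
The plan is to use the spectral theorem to reduce the statement to a scalar statement about the spectral measure, then isolate the contribution of any eigenvalue at $\lambda = e^{i\theta_0}$ from the rest. Writing $[K_\epsilon^{\mathbb{T}}\ast\mathcal{E}](\theta_0)g = \int_\pp K_\epsilon^{\mathbb{T}}(\theta_0-\varphi)\,\mathrm{d}\mathcal{E}(\varphi)g$ via functional calculus, I would split the projection-valued measure as $\mathcal{E} = \mathcal{P}_{e^{i\theta_0}}\delta_{\theta_0} + \mathcal{E}'$, where $\mathcal{E}'$ is the residual spectral measure with no mass at $\theta_0$ (it equals $\mathcal{E}$ itself when $e^{i\theta_0}$ is not an eigenvalue). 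This yields
\begin{equation*}
\frac{1}{K_\epsilon^{\mathbb{T}}(0)}[K_\epsilon^{\mathbb{T}}\ast\mathcal{E}](\theta_0)g = \mathcal{P}_{e^{i\theta_0}}g + \frac{1}{K_\epsilon^{\mathbb{T}}(0)}\int_\pp K_\epsilon^{\mathbb{T}}(\theta_0-\varphi)\,\mathrm{d}\mathcal{E}'(\varphi)g,
\end{equation*}
so the whole argument reduces to showing the second term vanishes in $L^2(\Omega,\omega)$.

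By the spectral theorem, the squared $L^2$ norm of the second term equals $\int_\pp |K_\epsilon^{\mathbb{T}}(\theta_0-\varphi)/K_\epsilon^{\mathbb{T}}(0)|^2\,\mathrm{d}\xi_g'(\varphi)$, where $\xi_g'$ is the scalar spectral measure associated with $\mathcal{E}'$ and $g$. The decay condition \cref{def:unitary_decay} gives $|K_\epsilon^{\mathbb{T}}(\theta)|\leq C_K/\epsilon$ uniformly in $\theta$, while the hypothesis $\liminf_{\epsilon\downarrow 0}\epsilon|K_\epsilon^{\mathbb{T}}(0)|>0$ gives $|K_\epsilon^{\mathbb{T}}(0)|\geq c/\epsilon$ for small $\epsilon$. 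Together these bound the integrand by a constant independent of $\epsilon$ and $\varphi$. For fixed $\varphi\neq\theta_0$, \cref{def:unitary_decay} also gives pointwise convergence $|K_\epsilon^{\mathbb{T}}(\theta_0-\varphi)/K_\epsilon^{\mathbb{T}}(0)|^2\to 0$ as $\epsilon\downarrow 0$, since the numerator is $\mathcal{O}(\epsilon^m)$ while the denominator is bounded below by $c/\epsilon$. Because $\xi_g'$ is a finite Borel measure with $\xi_g'(\{\theta_0\})=0$, dominated convergence yields $\int_\pp |K_\epsilon^{\mathbb{T}}(\theta_0-\varphi)/K_\epsilon^{\mathbb{T}}(0)|^2\,\mathrm{d}\xi_g'(\varphi)\to 0$.

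Combining the two observations gives the claimed $L^2$-convergence: the limit equals $\mathcal{P}_{e^{i\theta_0}}g$ when $e^{i\theta_0}$ is an eigenvalue, and equals $0$ otherwise (since then $\mathcal{P}_{e^{i\theta_0}}=0$ and $\mathcal{E}'=\mathcal{E}$). The only mild subtlety I expect is verifying that the residual measure $\mathcal{E}'$ really has no atom at $\theta_0$ by construction, so that dominated convergence actually applies; this is immediate from the definition but worth stating explicitly. Everything else reduces to the uniform upper bound and pointwise decay supplied directly by \cref{def:unitary_decay} together with the non-degeneracy hypothesis on $K_\epsilon^{\mathbb{T}}(0)$.
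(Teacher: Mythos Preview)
Your proposal is correct and follows essentially the same approach as the paper: both isolate the atom at $\theta_0$, reduce via the functional calculus to a scalar integral against a measure with no mass at $\theta_0$, and then apply dominated convergence using the uniform bound from \cref{def:unitary_decay} together with the hypothesis $\liminf_{\epsilon\downarrow 0}\epsilon|K_\epsilon^{\mathbb{T}}(0)|>0$. The only cosmetic difference is that the paper phrases the decomposition by replacing $g$ with $g-\mathcal{P}_{e^{i\theta_0}}g$ rather than splitting $\mathcal{E}$, but the resulting integral and its analysis are identical.
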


\begin{proof}
See \cref{sec:conv_WPA_app}.
\end{proof}

\subsubsection{Convergence of spectral measures}

For completeness, we restate two theorems from \cite{colbrook2021rigorous} concerning the convergence of smoothed spectral measures. The first result, stated for a general, complex-valued Borel measure with finite variation, demonstrates that the spectral measures computed via Rigged DMD converge at the expected rates.

\begin{theorem}[Pointwise convergence]
\label{thm:unitary_pointwise_convergence}
Let $\{K_{\epsilon}^\mathbb{T}\}$ be an $m$th order kernel for $\pp$ and $\xi$ be a (possibly complex-valued) Borel measure on $\pp$ with total variation $\|\xi\|$. Suppose that for some $\theta_0\in\pp$ and $\eta\in(0,\pi)$, $\xi$ is absolutely continuous on the closed interval $\mathcal{I}=[\theta_0-\eta,\theta_0+\eta]$. Let $\rho$ be the Radon--Nikodym derivative of the absolutely continuous component of $\xi$, and suppose that $\rho\in\mathcal{C}^{n,\alpha}(\mathcal{I})$ with $n\in\mathbb{N}\cup\{0\}$ and $\alpha\in[0,1]$. Then,
$$
\left|\rho(\theta_0)-[K_\epsilon^\mathbb{T}\ast\xi](\theta_0)\right|\lesssim
\frac{\epsilon^m\|\xi\|}{(\epsilon+\eta)^{m+1}}+\begin{cases}
\|\rho\|_{\mathcal{C}^{n,\alpha}(\mathcal{I})}(1+\eta^{-(n+\alpha)})\epsilon^{n+\alpha},\quad&\text{if }n+\alpha<m,\\
\|\rho\|_{\mathcal{C}^{m}(\mathcal{I})}(1+\eta^{-m})\epsilon^{m}\llog,\quad &\text{if }n+\alpha\geq m.
\end{cases}
$$
\end{theorem}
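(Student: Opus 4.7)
The plan is to split the convolution
$$[K_\epsilon^\mathbb{T}\ast\xi](\theta_0)=\int_{\pp}K_\epsilon^\mathbb{T}(\theta_0-\varphi)\dd\xi(\varphi)$$
into an interior piece on $\mathcal{I}=[\theta_0-\eta,\theta_0+\eta]$, where $\xi$ has the smooth density $\rho$, and an exterior piece on $\pp\setminus\mathcal{I}$, where only the total variation $\|\xi\|$ is available. The exterior piece is handled directly by the decay estimate in \cref{def:unit_mth_kern_def}(iii): for $|\theta_0-\varphi|\geq\eta$ one has $|K_\epsilon^\mathbb{T}(\theta_0-\varphi)|\lesssim \epsilon^m/(\epsilon+\eta)^{m+1}$, and integrating against $|\xi|$ produces the first summand on the right-hand side.

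For the interior piece, I would write
$$\int_{\mathcal{I}}K_\epsilon^\mathbb{T}(\theta_0-\varphi)\rho(\varphi)\dd\varphi=\rho(\theta_0)\int_{\mathcal{I}}K_\epsilon^\mathbb{T}(\theta_0-\varphi)\dd\varphi+\int_{\mathcal{I}}K_\epsilon^\mathbb{T}(\theta_0-\varphi)\bigl[\rho(\varphi)-\rho(\theta_0)\bigr]\dd\varphi,$$
and note that the coefficient multiplying $\rho(\theta_0)$ equals $1$ up to the tail $\int_{\pp\setminus\mathcal{I}}K_\epsilon^\mathbb{T}$, which is absorbed by combining the normalization \cref{def:unit_mth_kern_def}(i) with the exterior bound already obtained. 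It then remains to estimate the convolution of $K_\epsilon^\mathbb{T}$ against the centered density $\rho(\cdot)-\rho(\theta_0)$ on $\mathcal{I}$.

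Here I Taylor-expand $\rho$ about $\theta_0$ to order $\min(n,m-1)$ and exploit the moment structure of the kernel. Using \cref{prop:per_summation}, I reduce to the underlying $\mathbb{R}$-kernel $K$ via the change of variables $\varphi=\theta_0+\epsilon t$: the polynomial terms of the expansion are annihilated by the vanishing-moment property in \cref{def:mth_order_kernel}(ii), modulo a truncation defect arising from restricting the moment identity to $|t|\leq \eta/\epsilon$ that is itself an exterior-type tail controlled by \cref{def:mth_order_kernel}(iii). When $n+\alpha<m$, the Hölder remainder bound $|R_n(\varphi)|\lesssim \|\rho\|_{\mathcal{C}^{n,\alpha}(\mathcal{I})}|\varphi-\theta_0|^{n+\alpha}$ paired with the integrable weight $|t|^{n+\alpha}(1+|t|)^{-(m+1)}$ yields the $\epsilon^{n+\alpha}$ contribution. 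When $n+\alpha\geq m$, moments up to order $m-1$ vanish outright, and the $m$-th Taylor term generates the logarithmic factor via $\int_{|t|\leq 1/\epsilon}|t|^m(1+|t|)^{-(m+1)}\dd t\asymp \llog$.

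The main obstacle is that \cref{def:unit_mth_kern_def}(ii) controls Fourier coefficients, whereas the Taylor argument naturally calls for polynomial moment identities on $\mathcal{I}$. For kernels arising via \cref{prop:per_summation} (including those used by Rigged DMD) this is painless, since the pre-periodic $\mathbb{R}$-kernel supplies vanishing polynomial moments and the non-principal periodic images sit at distance $\gtrsim 1$ from $\theta_0$ for small $\epsilon$, contributing to the interior integral only through the same decay estimate that governs the exterior tail. For a general kernel satisfying only the abstract axioms of \cref{def:unit_mth_kern_def}, one can instead translate (ii) into polynomial-moment bounds on $\mathcal{I}$ by Fourier-expanding the truncated monomials $(\varphi-\theta_0)^j$ against $K_\epsilon^\mathbb{T}$, using the low-mode control from (ii) together with the $L^1$ bound implied by (iii) for the tail Fourier modes. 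Either route produces the stated estimate with constants depending only on $C_K$, $m$, and $n+\alpha$, consistent with the $\lesssim$ convention introduced just before the theorem statement.
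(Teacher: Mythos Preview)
The paper does not prove this theorem; it is explicitly restated from \cite{colbrook2021rigorous} (see the sentence introducing \cref{thm:unitary_pointwise_convergence,thm:unitary_weak_convergence}). Your proposed argument---interior/exterior split, exterior tail via the decay bound (iii), interior via Taylor expansion plus moment cancellation---is the standard route for such estimates and is essentially what the cited reference carries out. Your identification of the Fourier-versus-polynomial moment mismatch is the only nontrivial point, and both of your suggested workarounds are viable.

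A slightly cleaner way to handle the general abstract kernel is to Taylor-expand $\rho$ about $\theta_0$ in the trigonometric basis $\{e^{ij(\varphi-\theta_0)}\}_{j=0}^{\min(n,m-1)}$ rather than in monomials $(\varphi-\theta_0)^j$. The change of basis between these two families (matching derivatives at the origin) is triangular and invertible with constants depending only on $m$, so the H\"older remainder retains the same order $|\varphi-\theta_0|^{n+\alpha}$, while the polynomial part now pairs directly against the Fourier-moment condition (ii): since $\int K_\epsilon^\mathbb{T}(-\theta)e^{ij\theta}\dd\theta=1+\mathcal{O}(\epsilon^m\llog)$ for $0\le j\le m-1$, any linear combination $\sum_j c_j e^{ij\theta}$ with $\sum_j c_j=0$ (i.e., matching the centering $\rho(\cdot)-\rho(\theta_0)$) integrates to $\mathcal{O}(\epsilon^m\llog)$ with coefficient bounded by $\|\rho\|_{\mathcal{C}^{n,\alpha}(\mathcal{I})}$. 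This avoids the Fourier-expansion-of-truncated-monomials detour you sketch at the end.
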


To understand the next theorem, recall that if $\Phi:\mathbb{T}\rightarrow\mathbb{C}$ is a continuous function and $\K$ is unitary, then one can define a normal operator $\Phi(\K)$ via
$$
\Phi(\K)=\int_{\pp} \Phi(e^{i\varphi})\dd \mathcal{E}(\varphi)=\int_\mathbb{T}\Phi(\lambda)\dd \mathcal{E}(\lambda).
$$
The following theorem says that integration against $K_\epsilon^\mathbb{T}\ast\mathcal{E}$ approximates the functional calculus of $\K$. For example, the choice $\Phi(\lambda)=\lambda^n$ (with associated function $\phi(\theta)=\exp(in\theta)$ in angle coordinates) recovers $\K^n$ and implies convergence of the smoothed KMD in \cref{eq:smoothedKMD}.

\begin{theorem}[Convergence to the functional calculus]
\label{thm:unitary_weak_convergence}
Let $\{K_{\epsilon}^\mathbb{T}\}$ be an $m$th order kernel for $\pp$ and $\phi\in\mathcal{C}^{n,\alpha}(\pp)$ with $n\in\mathbb{N}\cup\{0\}$ and $\alpha\in[0,1]$. Then
$$
\left\|\int_{\pp}\phi(\varphi)\left(\dd\mathcal{E}(\varphi){-} [K_\epsilon^\mathbb{T}\ast\mathcal{E}](\varphi)\dd\varphi\right)\right\|{\lesssim}\begin{cases}
\|\phi\|_{\mathcal{C}^{n,\alpha}(\pp)}\epsilon^{n+\alpha},\quad&\text{if }n+\alpha<m,\\
\|\phi\|_{\mathcal{C}^{m}(\pp)}\epsilon^{m}\llog,\quad &\text{if }n+\alpha\geq m.
\end{cases}
$$
\end{theorem}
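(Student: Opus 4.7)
The plan is to reduce the operator norm to a uniform scalar estimate via the functional calculus of the unitary $\mathcal{K}$, and then to apply \cref{thm:unitary_pointwise_convergence} to an absolutely continuous scalar measure.

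First, I would apply Fubini to swap the order of the two integrals. Writing $[K_\epsilon^\mathbb{T}\ast\mathcal{E}](\varphi)=\int_\pp K_\epsilon^\mathbb{T}(\varphi-\theta)\,d\mathcal{E}(\theta)$ and using the continuity of $K_\epsilon^\mathbb{T}$ together with the finite total variation of the scalar measures $\langle\mathcal{E}(\cdot)g,h\rangle$ for $g,h\in L^2(\Omega,\omega)$, I obtain
$$
\int_\pp \phi(\varphi)[K_\epsilon^\mathbb{T}\ast\mathcal{E}](\varphi)\,d\varphi=\int_\pp \phi_\epsilon(\theta)\,d\mathcal{E}(\theta),\qquad \phi_\epsilon(\theta):=\int_\pp K_\epsilon^\mathbb{T}(\varphi-\theta)\phi(\varphi)\,d\varphi.
$$
Therefore the operator inside the norm equals $(\phi-\phi_\epsilon)(\mathcal{K})$ via the continuous functional calculus, and the spectral theorem for the unitary $\mathcal{K}$ yields
$$
\left\|(\phi-\phi_\epsilon)(\mathcal{K})\right\|\leq\sup_{\theta\in\pp}|\phi(\theta)-\phi_\epsilon(\theta)|.
$$

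Next, I would bound this supremum by invoking \cref{thm:unitary_pointwise_convergence} applied to the absolutely continuous scalar Borel measure $\xi:=\phi(\theta)\,d\theta$ on $\pp$, whose Radon--Nikodym density is $\phi$ itself. Since $\phi\in\mathcal{C}^{n,\alpha}(\pp)$ globally, I may fix any $\eta\in(0,\pi)$, say $\eta=\pi/2$, so that on every sub-interval $\mathcal{I}=[\theta_0-\eta,\theta_0+\eta]$ the local H\"older norm is bounded by $\|\phi\|_{\mathcal{C}^{n,\alpha}(\pp)}$, the total variation $\|\xi\|\leq 2\pi\|\phi\|_\infty$ is dominated by the same quantity, and the prefactors $(1+\eta^{-(n+\alpha)})$ and $(\epsilon+\eta)^{-(m+1)}$ become absolute constants. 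The first term in the bound of \cref{thm:unitary_pointwise_convergence} is then $\lesssim\epsilon^m\|\phi\|_{\mathcal{C}^{n,\alpha}(\pp)}$, which is absorbed by the H\"older term in both cases of the dichotomy. Because the resulting pointwise bound does not depend on the base point $\theta_0$, it passes to the supremum over $\pp$ and delivers exactly the rates asserted.

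The main technical point to verify is matching the convolution convention: $\phi_\epsilon(\theta)$ is a convolution against the reflected kernel $\check K_\epsilon^\mathbb{T}(\psi):=K_\epsilon^\mathbb{T}(-\psi)$ relative to the convention in \cref{thm:unitary_pointwise_convergence}. For the real-valued kernels produced by \cref{prop:carath_form} under the reality choice $b_j=\overline{a_j}$, $\beta_j=\overline{\alpha_j}$, $\check K_\epsilon^\mathbb{T}$ satisfies all three conditions of \cref{def:unit_mth_kern_def} with the same order $m$ and constant $C_K$ (property~(ii) transfers because $|M_n-1|=|\overline{M_{-n}}-1|=|M_{-n}-1|$ when $K_\epsilon^\mathbb{T}$ is real). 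This is purely bookkeeping, with all analytic content residing in \cref{thm:unitary_pointwise_convergence}.
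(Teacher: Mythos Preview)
The paper does not supply its own proof of \cref{thm:unitary_weak_convergence}; as noted in the text immediately preceding \cref{thm:unitary_pointwise_convergence}, both theorems are restated from \cite{colbrook2021rigorous}. Your approach---reducing the operator norm to $\|\phi-\phi_\epsilon\|_\infty$ via the continuous functional calculus for the unitary $\mathcal{K}$, then bounding the convolution error by applying \cref{thm:unitary_pointwise_convergence} to the absolutely continuous scalar measure $\xi=\phi\,d\theta$ with a fixed $\eta$---is correct and is essentially the argument of the original reference.

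The reflection issue you flag is real but minor. Your treatment for real-valued kernels is sound and covers every kernel actually constructed and used in the paper (the periodic summations of \cref{prop:per_summation} under the choice $b_j=\overline{a_j}$, $\beta_j=\overline{\alpha_j}$). For a general complex-valued $m$th order kernel, property (ii) of \cref{def:unit_mth_kern_def} is stated only for $n=1,\ldots,m-1$ and need not automatically transfer to $\check K_\epsilon^\mathbb{T}$; in that generality one either repeats the proof of \cref{thm:unitary_pointwise_convergence} with the sign of the convolution variable reversed, or observes that the underlying Taylor-remainder estimates are sign-symmetric. This does not affect the statement as applied anywhere in the paper.
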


\Cref{thm:unitary_weak_convergence} implies similar bounds for the weak convergence of $K_\epsilon^\mathbb{T}\ast\xi_v$ to $\xi_v$, because
$$
\left|\int_{\pp}\!\!\phi(\varphi)\left(\!\dd\xi_v(\varphi)- [K_\epsilon^\mathbb{T}\ast\xi_v](\varphi)\dd\varphi\!\right)\right|\leq\left\|\int_{\pp}\!\!\phi(\varphi)\left(\!\dd\mathcal{E}(\varphi)- [K_\epsilon^\mathbb{T}\ast\mathcal{E}](\varphi)\dd\varphi\!\right)\right\|\!\|v\|^2.
$$

\section{A general and natural method to construct nuclear spaces}
\label{sec:nuclear_construction}

To apply the nuclear spectral theorem and Rigged DMD, we do not need to explicitly know what $\mathcal{S}$ is;
it suffices to know it exists with $g\in \mathcal{S}$. We now show that a natural nuclear space exists for Koopman operators. Suppose that $\{q_n:n\in\mathbb{N}\}$ is an orthonormal basis of $L^2(\Omega,\omega)$. This basis could be constructed using the Gram--Schmidt process, but we do not need it explicitly. For example, in \cref{sec:delay_embedd_example}, we implicitly use such a basis obtained via delay-embedding.

Associated with $\mathcal{K}$ is a unitary matrix $A$ acting on $l^2(\mathbb{N})$, where the canonical basis of $l^2(\mathbb{N})$ is identified with $\{q_n:n\in\mathbb{N}\}$. To construct $\mathcal{S}$, we consider weighted $l^2$ spaces
$$
l^2_\tau(\mathbb{N})=\left\{(x_j)_{j\in\mathbb{N}}:\sum_{j=1}^\infty |x_j|^2 \tau(j)<\infty\right\}\qquad
\text{where $\tau:\mathbb{N}\rightarrow\mathbb{R}_{>0}$ is a positive weight function.}
$$
The space $l^2_\tau(\mathbb{N})$ has an associated inner product and norm
$$
\langle u,v\rangle_\tau = \sum_{j=1}^\infty u_j\overline{v_j} \tau(j), \qquad \|u\|_\tau=\sqrt{\sum_{j=1}^\infty |u_j|^2 \tau(j)}.
$$
The following lemma will allow us to choose an appropriate sequence of weighted $l^2$ spaces.

\begin{lemma}
\label{lemma:weightedl2}
Suppose that the unitary matrix $A$ is sparse, meaning that for any $q_n$, $\mathcal{K}q_n$ can be written as a finite linear combination of basis vectors in $\{q_n:n\in\mathbb{N}\}$. Given a weight function $\tau:\mathbb{N}\rightarrow\mathbb{R}_{>0}$, there exists a weight function $\tau':\mathbb{N}\rightarrow\mathbb{R}_{>0}$ with $\tau'\geq \tau$ such that:
\begin{itemize}
	\item[(i)] The inclusion map
$
\iota:l^2_{\tau'}(\mathbb{N})\hookrightarrow l^2_\tau(\mathbb{N})
$
is nuclear;
\item[(ii)] If $u\in l^2_{\tau'}(\mathbb{N})$, then $A u \in l^2_\tau(\mathbb{N})$ with $\|A u\|_\tau\leq \|u\|_{\tau'}$.
\end{itemize}
\end{lemma}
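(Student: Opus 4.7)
The plan is a direct construction that decouples the two requirements on $\tau'$ into summability conditions on a single scalar sequence.

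First, I would dispatch condition (i). The inclusion $\iota : l^2_{\tau'} \hookrightarrow l^2_\tau$ is diagonal in the natural bases $\{e_j/\sqrt{\tau'(j)}\}$ of $l^2_{\tau'}$ and $\{e_j/\sqrt{\tau(j)}\}$ of $l^2_\tau$, which are orthonormal in their respective inner products. A short computation gives $\iota(e_j/\sqrt{\tau'(j)}) = \sqrt{\tau(j)/\tau'(j)}\,(e_j/\sqrt{\tau(j)})$, so the singular values of $\iota$ are exactly $\sigma_j = \sqrt{\tau(j)/\tau'(j)}$. Since nuclear equals trace class between Hilbert spaces, (i) is equivalent to $\sum_j \sqrt{\tau(j)/\tau'(j)} < \infty$.

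Next, for condition (ii), I would obtain an a priori bound on a finitely supported $u$ via Cauchy--Schwarz applied to each row,
$$
|(Au)_i|^2 = \Bigl|\sum_j \tfrac{A_{ij}}{\sqrt{\tau'(j)}}\cdot\sqrt{\tau'(j)}\,u_j\Bigr|^2 \leq \Bigl(\sum_j \tfrac{|A_{ij}|^2}{\tau'(j)}\Bigr)\|u\|_{\tau'}^2,
$$
then multiply by $\tau(i)$ and apply Tonelli to get
$$
\|Au\|_\tau^2 \leq \|u\|_{\tau'}^2 \sum_{j=1}^\infty \frac{\beta(j)}{\tau'(j)}, \qquad \beta(j) := \sum_i \tau(i)\,|A_{ij}|^2.
$$
The sparsity hypothesis says that each column of $A$ has finite support $C_j$; combined with $|A_{ij}|\leq 1$ (since $A$ is unitary with unit-norm columns), this yields $\beta(j) \leq \sum_{i\in C_j}\tau(i) < \infty$, so $\beta$ is a well-defined positive weight on $\mathbb{N}$.

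It remains to choose $\tau'$ to make both series summable simultaneously. I would fix any positive sequence $\{c(j)\}$ with $\sum_j c(j) \leq 1$ (for instance $c(j) = 2^{-j}$) and set
$$
\tau'(j) := \max\Bigl(\tau(j),\ \tau(j)/c(j)^2,\ \beta(j)/c(j)\Bigr).
$$
Then $\tau'(j)\geq \tau(j)$ by construction, while $\sqrt{\tau(j)/\tau'(j)}\leq c(j)$ and $\beta(j)/\tau'(j)\leq c(j)$, so both summability conditions hold with sum at most $1$. The bound $\|Au\|_\tau\leq\|u\|_{\tau'}$ on finitely supported vectors extends to all $u\in l^2_{\tau'}$ by a standard density and Cauchy-sequence argument in $l^2_\tau$, and continuity of coordinate evaluation on $l^2_\tau$ identifies the limit with the natural matrix-vector product $Au$.

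\emph{Main obstacle.} Once the two requirements are parameterized by a common summable sequence $\{c(j)\}$, the construction is almost automatic. The real content of the lemma is the finiteness of $\beta(j)$, which is where sparsity of $A$ is essential: without it, the rearrangement of $\tau$ into column sums can diverge, and no single choice of $\tau'(j)$ can simultaneously ensure nuclearity of the inclusion and a norm bound on $A$. I would therefore structure the writeup to highlight this step explicitly, with everything else flowing from it.
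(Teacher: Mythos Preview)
Your proposal is correct and follows essentially the same strategy as the paper: characterize nuclearity of $\iota$ via its diagonal singular values $\sqrt{\tau(j)/\tau'(j)}$, control $\|Au\|_\tau$ via Cauchy--Schwarz together with sparsity and unitarity of $A$, and then choose $\tau'$ large enough to satisfy both conditions.

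There is one minor but genuine difference in how the Cauchy--Schwarz step is organized. The paper applies an \emph{unweighted} Cauchy--Schwarz to each row, bounds $\sum_k|A_{jk}|^2\leq\|A^*\|^2=1$, and after swapping the double sum arrives at a \emph{pointwise} constraint $\tau'(k)\geq\sum_{j\leq f(k)}\tau(j)$ (where $f(k)$ bounds the support of column $k$). You instead apply a $\tau'$-\emph{weighted} Cauchy--Schwarz, pull out $\|u\|_{\tau'}^2$ immediately, and reduce (ii) to a \emph{global} summability constraint $\sum_j\beta(j)/\tau'(j)\leq 1$ with $\beta(j)=\sum_i\tau(i)|A_{ij}|^2$. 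Your explicit choice $\tau'(j)=\max\bigl(\tau(j),\,\tau(j)/c(j)^2,\,\beta(j)/c(j)\bigr)$ with $\sum_j c(j)\leq 1$ is a clean way to satisfy both constraints simultaneously; the paper simply asserts that such a $\tau'$ can be selected. Both routes hinge on the same point you correctly flagged as the main content: sparsity is precisely what makes $\beta(j)$ (equivalently, $\sum_{j\leq f(k)}\tau(j)$) finite.
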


\begin{proof}
See \cref{sec:unitary_RHS_app}.
\end{proof}

We repeatedly apply \cref{lemma:weightedl2} to construct a nuclear space. We begin with
$$
\mathcal{S}_0=L^2(\Omega,\omega)=\left\{u=\sum_{j=1}^\infty u_jq_j:(u_j)\in l^2(\mathbb{N})\right\}.
$$
Given
$$
\mathcal{S}_n=\left\{u=\sum_{j=1}^\infty u_jq_j:(u_j)\in l^2_{\tau_n}(\mathbb{N})\right\}\subset L^2(\Omega,\omega),
$$
we apply \cref{lemma:weightedl2} to obtain $\tau_{n+1}=\tau_n'$ and set
$$
\mathcal{S}_{n+1}=\left\{u=\sum_{j=1}^\infty u_jq_j:(u_j)\in l^2_{\tau_{n+1}}(\mathbb{N})\right\}\subset\mathcal{S}_n.
$$
By part (i) of \cref{lemma:weightedl2}, each inclusion map
$
\mathcal{S}_{n+1}\hookrightarrow \mathcal{S}_{n}
$
is nuclear.
Setting
$
\mathcal{S}=\cap_{n=1}^\infty\mathcal{S}_n,
$
we have
$
\mathrm{span}\{q_n:n\in\mathbb{N}\}\subset \mathcal{S}.
$
Moreover, $\mathcal{K}:\mathcal{S}\rightarrow \mathcal{S}$ is continuous due to part (ii) of \cref{lemma:weightedl2}.

\subsection{Example: Time-delay embedding}
\label{sec:construct_nuclear_space}

\cref{lemma:weightedl2} holds when we use time-delay embedding. Time-delay embedding is a popular method for DMD algorithms \cite{arbabi2017ergodic,brunton2017chaos,das2019delay,kamb2020time,pan2020structure} and corresponds to using a Krylov subspace. The technique is justified through Takens' embedding theorem \cite{takens2006detecting}, which says that under certain conditions, delay embedding a signal coordinate of the system can reconstruct the attractor of the original system up to a diffeomorphism.

For example, let $g_1,g_2,\ldots,g_k\in L^2(\Omega,\omega)$ and assume that $\mathcal{K}^ng_j$ are linearly independent for $n\in\mathbb{Z}_{\geq 0}$ and $j=1,\ldots,k$. We order the observables $g_1,\ldots,g_k,\mathcal{K}g,\ldots,\mathcal{K}g_k,\ldots$ as $\hat q_1,\hat q_2,\ldots$ and consider the subspace of $L^2(\Omega,\omega)$ generated by these observables. Let $q_1,q_2,\ldots$ be an orthonormal sequence such that
$$
\mathrm{span}\{q_1,\ldots,q_N\}= \mathrm{span}\{\hat q_1,\ldots,\hat q_N\}\quad \forall N\in\mathbb{N}.
$$
For example, if we consider time delays of a single variable, then the corresponding matrix $A$ is upper Hessenberg. For time-delays of $k$ variables, the matrix representation $A$ of $\mathcal{K}$ is sparse with $A_{ij}=\langle\mathcal{K}q_j,q_i\rangle=0$ if $i>j+k$. The proof of \cref{lemma:weightedl2} shows that given a weight function $\tau_n$, we must select $\tau_{n+1}$ such that
$$
\sum_{j=1}^\infty\sqrt{\frac{\tau_n(j)}{\tau_{n+1}(j)}}<\infty,\qquad\text{and}\qquad\sum_{j=1}^{l+k}\tau_n(j)\leq\tau_{n+1}(l).
$$
Beginning with $\tau_0\equiv 1$, we may select $\tau_n(j)=c_n j^{3n}$ for some constants $c_n$. It follows that
$$
\mathcal{S}=\left\{f=\sum_{j=1}^\infty f_jq_j:\lim_{j\rightarrow\infty}j^mf_j=0\,\,\forall m\in\mathbb{N}\right\}.
$$
In particular, $\mathcal{K}^ng_j\in\mathcal{S}$ for $n\in\mathbb{Z}_{\geq 0}$ and $j=1,\ldots,k$. It follows that $\mathcal{S}$ corresponds to observables that can be expanded in our Krylov subspace with sufficiently rapidly decaying expansion coefficients according to the powers of $\mathcal{K}$.

\section{Examples}
\label{sec:examples}

We consider four examples of Rigged DMD:
\begin{itemize}
	\item Systems with Lebesgue spectrum. These systems decompose into a direct sum of shift operators, the canonical unitary operators, an example where we can write down the generalized eigenfunctions analytically. The spectrum is absolutely continuous on $\mathbb{T}\backslash\{1\}$. We consider the cat map as an example, where the smoothed generalized eigenfunctions become increasingly oscillatory as $\epsilon \downarrow 0$, and we demonstrate the convergence of Rigged DMD.
	\item Integrable Hamiltonian systems. The associated Koopman operators decompose with a Kronecker structure. The spectrum is continuous on $\mathbb{T}\backslash\{1\}$. Again, we provide an explicit generalized eigenfunction expansion, where the generalized eigenfunctions are plane waves with singular support along hyperplanes of constant energy (with connections to the Radon and Fourier transforms for the action and angle parts, respectively). As an example, we treat the nonlinear pendulum.
	\item The Lorenz system. The spectrum is continuous on $\mathbb{T}\backslash\{1\}$ but the generalized eigenfunction expansion for this example is unknown. We utilize the method outlined in \cref{sec:construct_nuclear_space} to construct a rigged Hilbert space via time-delay embedding and also demonstrate the increased coherency provided by high-order kernels.
	\item A high-Reynolds-number flow where a discrete DMD expansion presents significant challenges. This system also illustrates the application of Rigged DMD to noisy datasets with large state-space dimensions.
\end{itemize}

\subsection{Systems with Lebesgue spectrum}

As our first example, we consider systems with Lebesgue spectrum. These systems generalize the bilateral shift in \cref{sec:EDMD_not_converge}. Just as differential operators such as the Schr\"odinger operator are canonical self-adjoint operators, the bilateral shift operator can be considered the canonical example of a unitary operator.

\begin{definition}
The dynamical system $(\Omega,\omega,\Fv)$ with $\omega(\Omega)=1$ has Lebesgue spectrum if there exists an orthonormal basis $\{1,f_{i,j}:i\in\mathcal{J},j\in\mathbb{Z}\}$ of $L^2(\Omega,\omega)$ such that
$$
\mathcal{K}f_{i,j}=f_{i,j-1},\quad i\in\mathcal{J},j\in\mathbb{Z}.
$$
Here, $\mathcal{J}$ is an index set whose cardinality is called the multiplicity of the Lebesgue spectrum.
\end{definition}

Many dynamical systems have Lebesgue spectrum, such as geodesic flows on manifolds of constant negative curvature \cite{gel1952geodesic} and K-automorphisms (e.g., Bernoulli schemes) \cite{cornfeld2012ergodic}. Having a Lebesgue spectrum implies that the system is mixing \cite[Theorem 10.4]{arnold1968ergodic}.

If the system $(\Omega,\omega,\Fv)$ has Lebesgue spectrum, $\mathcal{K}$ acts as a direct sum of bilateral shift operators on the space $\mathrm{span}\{1\}^\perp$. Hence, to compute the decomposition in~\cref{eq:rigged_expansion}, it is enough to study $\mathcal{K}$ restricted to each of the invariant subspaces $\mathcal{H}_i=\mathrm{Cl}(\mathrm{span}\{f_{i,j}:j\in\mathbb{Z}\})$. Hence, it is enough to study the bilateral shift operator on $l^2(\mathbb{Z})$.

\subsubsection{Generalized eigenfunctions of the bilateral shift operator}
\label{sec:gen_efun_shift}

We can give an explicit rigged Hilbert space for the bilateral shift and, hence, for systems with Lebesgue spectrum. Consider $\mathcal{H}=l^2(\mathbb{Z})$ and the bilateral shift operator $\mathcal{L}$ defined by its action on the canonical basis vectors $\mathcal{L}e_j=e_{j-1}$, for each $j\in\mathbb{Z}$. As our nuclear space, we take the space of rapidly decreasing sequences~\cite[p.~85]{gel4generalized}
$$
\mathcal{S}=\left\{a=\sum_{n\in\mathbb{Z}}a_ne_n:\lim_{|n|\rightarrow\infty}|n|^ma_n=0\,\,\forall m\in\mathbb{N}\right\}.
$$
Given $\lambda=e^{i\theta}\in\mathbb{T}$, the corresponding generalized eigenfunction is $u_\theta=\sum_{j\in\mathbb{Z}}e^{ij\theta}e_j$. Note that the generalized eigenfunctions are non-normalizable with expansion coefficients of constant amplitude.

The rigged Koopman mode decomposition in~\cref{eq:rigged_expansion} takes the explicit form
$$
a = \frac{1}{2\pi}\int_{\pp} \left\langle \sum_{n\in\mathbb{Z}}a_ne_n  ,u_\varphi\right\rangle\sum_{j\in\mathbb{Z}}e^{ij\varphi}e_j \dd \varphi=\frac{1}{2\pi}\int_{\pp} \sum_{n\in\mathbb{Z}}a_n e^{-in\varphi}\sum_{j\in\mathbb{Z}}e^{ij\varphi}e_j \dd \varphi.
$$
For example, for $a=e_n$, Rigged DMD computes the smoothed generalized eigenfunction
$$
\sum_{j\in\mathbb{Z}}e_j\frac{1}{2\pi}\int_\pp K_\epsilon^{\mathbb{T}}(\theta-\varphi) e^{i(j-n)\varphi} \dd\varphi.
$$
As $\epsilon\downarrow 0$, we recover $e^{-in\theta}u_\theta$, with the $j$th coefficient in the sum converging as follows:
$$
\frac{1}{2\pi}\int_\pp K_\epsilon^{\mathbb{T}}(\theta-\varphi) e^{i(j-n)\varphi} \dd\varphi\rightarrow e^{i(j-n)\theta}.
$$
Notice that, in the limit $\epsilon\downarrow 0$, the expansion of the smoothed generalized eigenfunction in the canonical basis decays more slowly, approximating the non-normalizable discrete Fourier modes that characterize the generalized eigenfunctions of the bilateral shift operator. 

\subsubsection{Example: Arnold's cat map}\label{sec:cat_map}

As an example of a system with Lebesgue spectrum, let $\Omega=[-\pi,\pi]_{\mathrm{per}}^2$, $\omega$ be the uniform measure and consider Arnold's cat map \cite{arnold1968ergodic}
$$
\Fv(\xv)=(2x+y,x+y)=\begin{pmatrix}
2&1\\
1&1
\end{pmatrix}\xv.
$$
The functions $\{\phi_{m,n}(\xv)=\exp(imx+iny):m,n\in\mathbb{Z}\}$ form an orthonormal basis of $L^2(\Omega,\omega)$. The constant function is left invariant under $\mathcal{K}$, whereas in general,
$$
\mathcal{K}\phi_{m,n}=\phi_{2m+n,m+n}.
$$
Since the matrix $(2,1;1,1)$ is invertible over $\mathbb{Z}$, we can partition $\{\phi_{m,n}:(m,n)\neq(0,0)\}$ into distinct orbits, which one can show are infinite. Consider $\phi_{p,0}$ with $p$ prime, then any basis function $\phi_{m,n}$ in the orbit $\{\mathcal{K}^j\phi_{p,0}:j\in\mathbb{Z}\}$ must have indices $m$ and $n$ divisible by $p$. It follows that there must be a countable infinity of orbits. Therefore, the cat map has a countably infinite Lebesgue spectrum. Moreover, as the basis $\smash{\{\phi_{m,n}(\xv)\}}$ consists of Fourier modes on $\Omega$, the analysis of the bilateral shift operator in \cref{sec:gen_efun_shift} shows that the smoothed generalized eigenfunctions computed by Rigged DMD become more oscillatory as $\epsilon\downarrow 0$. 

\begin{figure}
\centering
\includegraphics[width=0.32\textwidth]{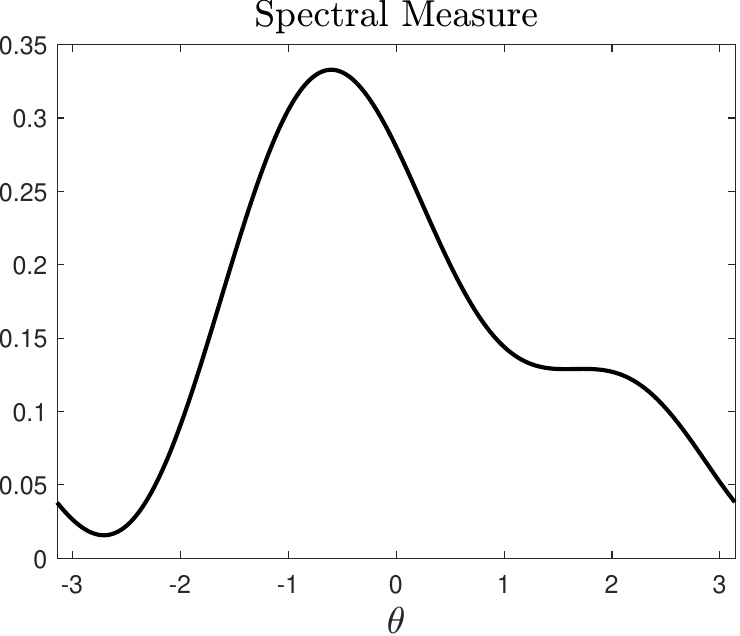}\hfill
\includegraphics[width=0.32\textwidth]{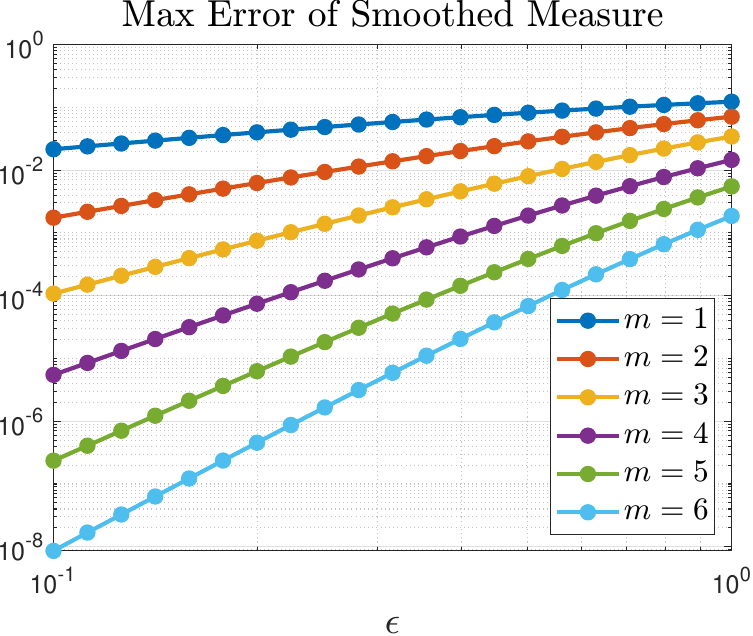}\hfill
\includegraphics[width=0.32\textwidth]{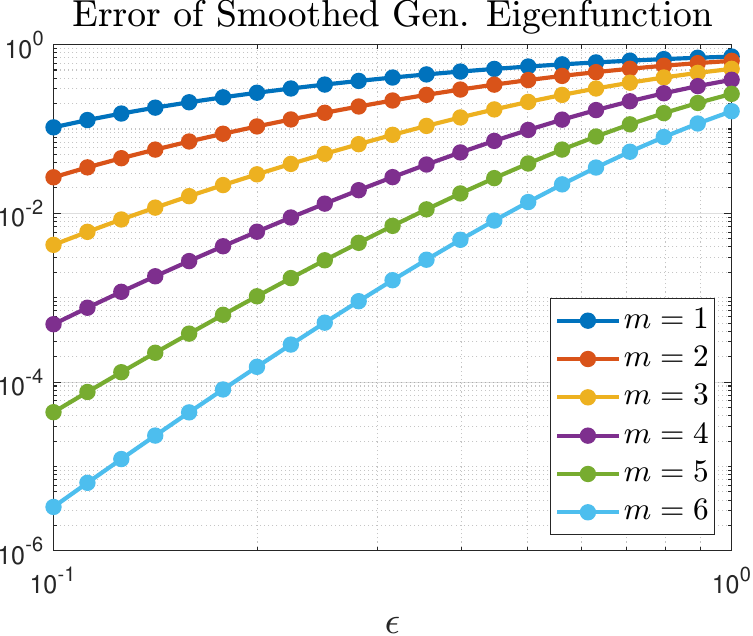}
\caption{Left: The density of the spectral measure $\xi_g$ for the cat map example. Middle: The $L^\infty(\pp)$ error of the smoothed approximation computed using the kernels in \cref{fig:kernels}. Right: The error of the smoothed generalized eigenfunctions of the cat map computed using \cref{alg:RiggedDMD}.\label{fig:catmap1}}
\end{figure}

\begin{figure}
\centering
\includegraphics[width=0.93\textwidth]{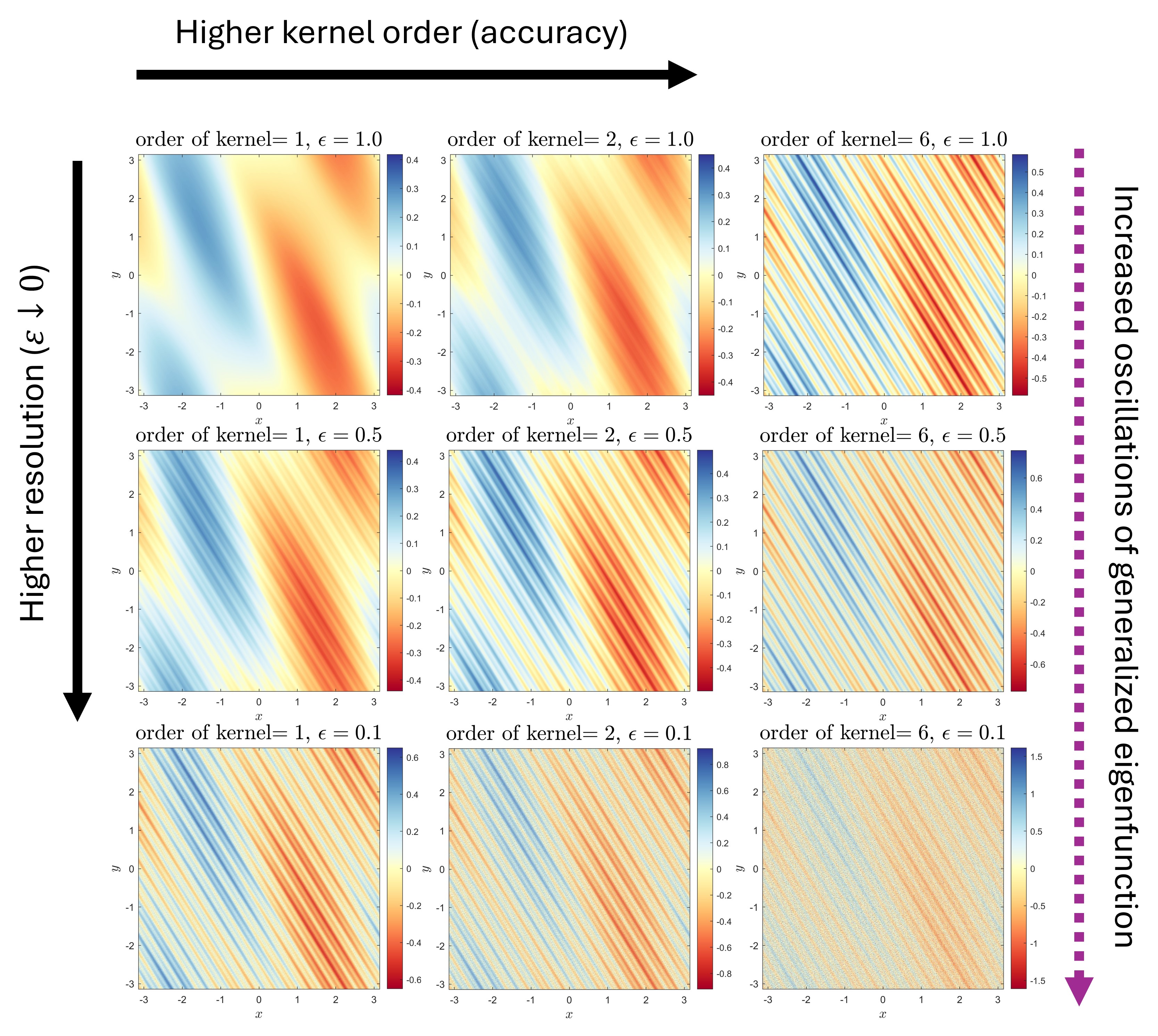}
\caption{The smoothed generalized eigenfunctions of the cat map computed using \cref{alg:RiggedDMD}.\label{fig:catmap3}}
\end{figure}

We collect data $\{\xv^{(m)},\yv^{(m)}\}$ along an equispaced $50\times 50$ grid for $\xv^{(m)}$ over $\Omega$, corresponding to $M=2500$. As a dictionary, we employ time-delay embedding with the function
$$
g=\sin(x) + \frac{1}{2}\sin(2x+y) + \frac{i}{4}\sin(5x+3y).
$$
This function is chosen because we can analytically compute the spectral measure and generalized eigenfunctions. \cref{fig:catmap1} shows the spectral measure and the maximum errors of the smoothed approximations computed using \cref{alg:RiggedDMD}. We see the convergence rates $\mathcal{O}(\epsilon^{m}\llog)$ predicted by the theorems in \cref{sec:convergence_theorems}. To demonstrate the convergence of the smoothed generalized eigenfunctions, we represent the eigenfunction at $\theta=1$ in the basis $\{g,\mathcal{K}g,\K^2g,\ldots\}$ and consider the relative $l^2$ norm error of the first five coefficients. These are also shown in \cref{fig:catmap1} and again demonstrate the high-order convergence. \cref{fig:catmap3} shows the corresponding smoothed generalized eigenfunctions. As $\epsilon\downarrow 0$, these functions become more oscillatory (as predicted by the above analysis). For the higher-order kernels, we see an increased resolution of the approximation for a given $\epsilon$.

\subsection{Integrable Hamiltonian systems}
\label{sec:example_integrable_systems}

The nonlinear pendulum has continuous spectra and is a well-known challenge for DMD methods \cite{lusch2018deep}. We consider general integrable Hamiltonian systems, before specializing to this example.
The Arnold--Liouville theorem states that if a Hamiltonian system with $n$ degrees of freedom has $n$ independent, Poisson commuting first integrals of motion, and if the constant energy manifolds are compact, then there exists a transformation to local action-angle coordinates in which (a) the transformed Hamiltonian is dependent only upon the action coordinates and (b) the angle coordinates evolve linearly in time \cite{arnol2013mathematical}. Suppose that our system can be transformed to global action-angle coordinates $\xv=(\Iv,\boldsymbol{\theta})\in\mathbb{R}^n\times[-\pi,\pi]_\mathrm{per}^n=\Omega$ such that $\dot{\Iv}=0, \dot{\boldsymbol{\theta}}=\Iv$. We sample the continuous dynamical system at discrete time steps of $\Delta t>0$. The associated discrete dynamical system has the map $\Fv(\Iv,\boldsymbol{\theta})=(\Iv,\boldsymbol{\theta}+\Delta t\Iv)$ and the Koopman operator is
$$
[\mathcal{K}g](\Iv,\boldsymbol{\theta}) = g(\Iv,\boldsymbol{\theta}+\Delta t\Iv), \qquad\text{where}\qquad g\in L^2(\Omega,\omega),
$$
and $\omega$ is the Lebesgue measure normalized by $(2\pi)^{-n}$.
We equip $L^2(\Omega,\omega)$ with the nuclear space $\mathcal{S}=S(\mathbb{R}^n)\times C^\infty([-\pi,\pi]_\mathrm{per}^n)$, where $S(\mathbb{R}^n)$ denotes the Schwartz space on $\mathbb{R}^n$.

Given $g\in\mathcal{S}$, we denote the $n$-dimensional Fourier series in the angle coordinates of $g$ by
$$
g(\Iv,\boldsymbol{\theta})=\sum_{\kv\in\mathbb{Z}^n} \hat g_\kv(\Iv) e^{i\kv\cdot\boldsymbol{\theta}}, \qquad\text{where}\qquad \hat g_\kv(\Iv) = \frac{1}{(2\pi)^n}\int_{[-\pi,\pi]_\mathrm{per}^n} g(\Iv,\boldsymbol{\theta})e^{-i\kv\cdot\boldsymbol{\theta}}\dd{\boldsymbol{\theta}}.
$$
In the Fourier basis for the angle coordinates, $\mathcal{K}$ acts as a multiplication operator, because
\begin{equation}
\label{FT_diag_ham}
\widehat{[\mathcal{K}g]}_\kv(\Iv) = \exp(-i\Delta t \kv\cdot\Iv) \hat g_\kv(\Iv), \qquad \text{for each} \qquad \kv \in\mathbb{Z}^n.
\end{equation}
Examining the resolvent of this multiplication operator, it is straightforward to see that the spectrum of $\mathcal{K}$ is continuous for all $\lambda\in\mathbb{T}$ except for an eigenvalue of (countably) infinite multiplicity at $\lambda=1$, corresponding to observables with no angular dependence. We, therefore, study each Fourier mode separately.

When $\kv=0$, the Koopman operators acts as the identity and, consequently, every angle-independent observable is an eigenfunction. Let $\{\phi_j\}_{j=1}^\infty\subset S(\mathbb{R}^n)$ be an orthonormal basis of $L^2(\mathbb{R}^n)$. The zeroth mode can be recovered by the generalized eigenfunction expansion
$$
g_0(\Iv)=\sum_{j=1}^\infty \underbrace{\int_{\mathbb{R}^n}g_0(\Iv')\overline{\phi_j(\Iv')}\dd \Iv'}_{=\langle \phi_j^*|g\rangle}\phi_j(\Iv)=\sum_{j=1}^\infty\int_\pp \langle \phi_j^*|g\rangle\phi_j(\Iv)\delta(\varphi)\dd\varphi.
$$
When $\kv\neq 0$ and $\tau\in\mathbb{R}$, we define the $(n-1)$-dimensional hyperplane
$$
H_{\kv}(\tau)=\{\Iv\in\mathbb{R}^n:\Delta t \Iv\cdot\kv=\tau\},
$$
and $Q_\kv(\Iv)$ the projection of $\Iv$ onto its component perpendicular to $\kv$. The Koopman operator acts as multiplication by a constant along each hyperplane $H_{\kv}(\tau)$ but is a nontrivial multiplication operator along their orthogonal complements.
Let $\{\psi_j^{(\kv)}\}_{j=1}^\infty\subset S(H_{\kv}(0))$ be an orthonormal basis of the space $L^2(H_{\kv}(0))$ equipped with the standard surface measure $\sigma$. For $\lambda=e^{i\theta}\in\mathbb{T}$, $m\in\mathbb{Z}$ and $j\in\mathbb{N}$, we define the distribution $u_\theta^{(\kv,m,j)}$ via
\begin{equation}
\label{meaning_of_u}
\left\langle {u_\theta^{(\kv,m,j)}}^*|g\right\rangle=\int_{H_{\kv}(\theta+2\pi m)} \hat g_\kv(\Iv)\overline{\psi_j^{(\kv)}(Q_\kv(\Iv))}\dd \sigma(\Iv).
\end{equation}
This integral is the Radon transform of $\hat g_\kv(\Iv)$ \cite[Chapter 6]{epstein2007introduction} with an additional integration against the function $\overline{\psi_j^{(\kv)}(Q_\kv(\Iv))}$.
Using the relation \eqref{FT_diag_ham}, we see that $u_\theta^{(\kv,m,j)}$ is a generalized eigenfunction corresponding to $\lambda=e^{i\theta}$. Formally, we have
\begin{equation}
\label{Ham_sing_supp}
u_\theta^{(\kv,m,j)}(\Iv,\boldsymbol{\theta})=\frac{1}{(2\pi)^n}\delta(\theta+2\pi m-\Delta t\kv\cdot\Iv)\psi_j^{(\kv)}(Q_\kv(\Iv))e^{i\kv\cdot\boldsymbol{\theta}},
\end{equation}
where the Dirac-delta distribution should be understood in terms of the integration along hyperplanes in \cref{meaning_of_u}. It is straightforward to show that
$$
\hat g_\kv(\Iv)e^{i\kv\cdot\boldsymbol{\theta}}=\sum_{m=-\infty}^\infty\sum_{j=1}^\infty \int_\pp \langle u_\varphi^{(\kv,m,j)*}|g\rangle u_\varphi^{(\kv,m,j)}(\Iv,\boldsymbol{\theta}) \dd \varphi.
$$
Hence, the Koopman mode decomposition of $g\in\mathcal{S}$ is
$$
g(\Iv,\boldsymbol{\theta}) \!=\! \sum_{j=1}^\infty\!\int_\pp\!\!\! \langle \phi_j^*|g\rangle\phi_j(\Iv)\delta(\varphi)\dd\varphi+\!\!\!\sum_{\kv\in\mathbb{Z}^n\backslash\{0\}}\sum_{m=-\infty}^\infty\sum_{j=1}^\infty \int_\pp\!\!\! \langle u_\varphi^{(\kv,m,j)*}|g\rangle u_\varphi^{(\kv,m,j)}(\Iv,\boldsymbol{\theta}) \dd \varphi.
$$
In particular, the generalized eigenfunctions consist of eigenfunctions in the usual sense (in $L^2(\Omega,\omega)$) corresponding to $\lambda=1$ that depend only on $\Iv$, and generalized eigenfunctions of the form in \cref{Ham_sing_supp} corresponding to any $\lambda\in\mathbb{T}$. These latter generalized eigenfunctions are plane waves with singular support along hyperplanes of constant directions of action.

\subsubsection{Nonlinear pendulum}\label{sec:nonlinear_pendulum}

We consider the dynamical system of the nonlinear pendulum. The state variables $\xv=(x_1,x_2)$ are governed by the following coupled ODEs: 
$$
\dot{x_1}=x_2,\quad \dot{x_2}=-\sin(x_1),\quad\text{ with}\quad \Omega=[-\pi,\pi]_{\mathrm{per}}\times \mathbb{R},
$$
where $\omega$ is the standard Lebesgue measure on $\Omega$. We consider the corresponding discrete-time dynamical system by sampling with a time-step $\Delta_t=1$. This system is Hamiltonian with one degree of freedom. A special case of the spectral decomposition we derived in \cref{sec:example_integrable_systems} was derived in \cite{mezic2020spectrum} for this system.

We employ time-delay embedding $\{g,\mathcal{K}g,\ldots, \mathcal{K}^{299}g\}$ with $g(x_1,x_2)=\exp(ix_1)/\cosh(x_2)$, and collect data on a 500-point equispaced grid in both $x_1$ and $x_2$ directions. Trajectory data is generated using MATLAB's \texttt{ode45}, based on initial conditions from this grid. We apply a trapezoidal quadrature rule with super-algebraic convergence in mpEDMD~\cite{trefethen2014exponentially,trefethen2022exactness}. \Cref{fig:pendulum1} shows the spectral measure and generalized eigenfunctions computed using Rigged DMD with smoothing parameter $\epsilon=0.05$ and the $6$th order kernel from \cref{fig:kernels}. The singular support and plane wave structure of the generalized eigenfunctions for $\lambda\neq 0$ are clearly visible.

\begin{figure}
\centering
\includegraphics[width=0.9\textwidth]{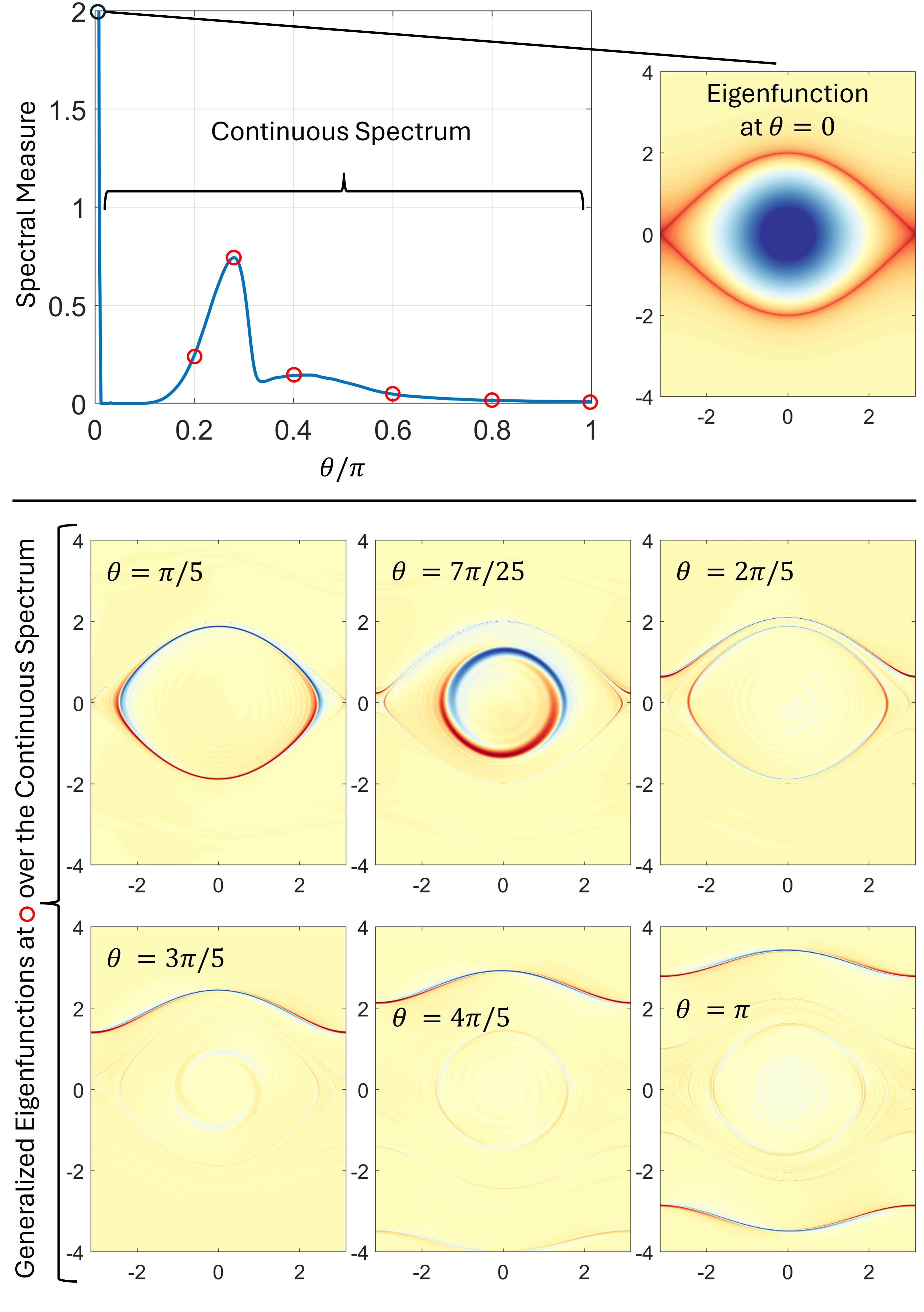}
\caption{The smoothed generalized eigenfunctions of the nonlinear pendulum computed using \cref{alg:RiggedDMD}.\label{fig:pendulum1}}
\end{figure}

\subsection{The Lorenz system}
\label{sec:delay_embedd_example}
The Lorenz (63) system with classical parameter values \cite{lorenz1963deterministic} is the following three coupled ODEs:
$$
\dot{x}=10\left(y-x\right),\quad\dot{y}=x\left(28-z\right)-y,\quad \dot{z}=xy-8z/3.
$$
We consider the dynamics of $\xv=(x,y,z)$ on the Lorenz attractor $\Omega$.
This system has a unique SRB measure\footnote{For a survey of these measures and their definitions, see \cite{young2002srb}.} $\omega$ on $\Omega$ \cite{Tucker2002}, meaning that for Lebesgue-almost every initial condition $\xv_0$ in the basin of attraction of $\Omega$ and compactly supported function $g:\mathbb{R}^3\rightarrow\mathbb{C}$,
\begin{equation}
\label{eq:SRB_conv}
\lim_{M\rightarrow\infty}\frac{1}{M}\sum_{m=0}^{M-1}[\K^{m}g](\xv_0)=\int_{\Omega}g(\xv)\dd \omega(\xv).
\end{equation}
The system is chaotic and strong mixing \cite{luzzatto2005lorenz}. It follows that $\lambda=1$ is the only eigenvalue of $\K$, corresponding to a constant eigenfunction, and that this eigenvalue is simple.

We consider a discrete-time dynamical system by sampling with a time-step $\Delta t=0.05$ and collecting $M=10^4$ snapshot pairs over a single trajectory. This quadrature rule is justified by \cref{eq:SRB_conv}. We use the \texttt{ode45} command in MATLAB to collect the data after an initial burn-in time to ensure that the initial point $\xv(0)$ is (approximately) on the Lorenz attractor. The system is chaotic, so we cannot hope to accurately numerically integrate for long periods. However, convergence is still obtained in the large data limit $M\rightarrow\infty$ due to an effect known as shadowing. As our dictionary, we use time delays of the function
$$
\psi_1(\xv)=g(\xv)=\tanh((xy-5z)/10)-c, \quad \psi_j=\mathcal{K}^{j-1}g,\quad j=2,\ldots,N,
$$
where $c$ is a constant chosen so that $g$ is orthogonal to the one-dimensional space spanned by constant functions. The value of $c\approx -0.5382$ is computed by taking the average of $\tanh((xy-5z)/10)$ over our trajectory. Shifting by $c$ ensures that the spectral measure $\xi_g$ is purely continuous. \cref{fig:lorenz1} shows the spectral measure computed using $N=1000$, the $6$th order kernel in \cref{fig:kernels} and a smoothing parameter $\epsilon=0.05$. There are four noticeable peaks at $\theta\approx0.414,0.497,0.828,$ and $0.994$. Since the spectral measure is continuous, these do not correspond to eigenvalues. Instead, we can compute generalized eigenfunctions corresponding to these values to obtain approximately coherent structures. These are also shown in \cref{fig:lorenz1}. The function labeled (a) is similar to the local spectral projections in \cite[Figure 13]{korda2020data} (see also \cite[Figure 4]{colbrook2023multiverse}), which the authors attributed to an almost-periodic motion of the $z$ component during the time that the state resides in either of the two lobes of the Lorenz attractor. The function labeled (c) corresponds to double the angle $\theta$ of that of (a) and roughly the square of the corresponding function. The structures in (b) and (d) are different and include oscillations within each lobe.

\begin{figure}
\centering
\includegraphics[width=0.45\textwidth]{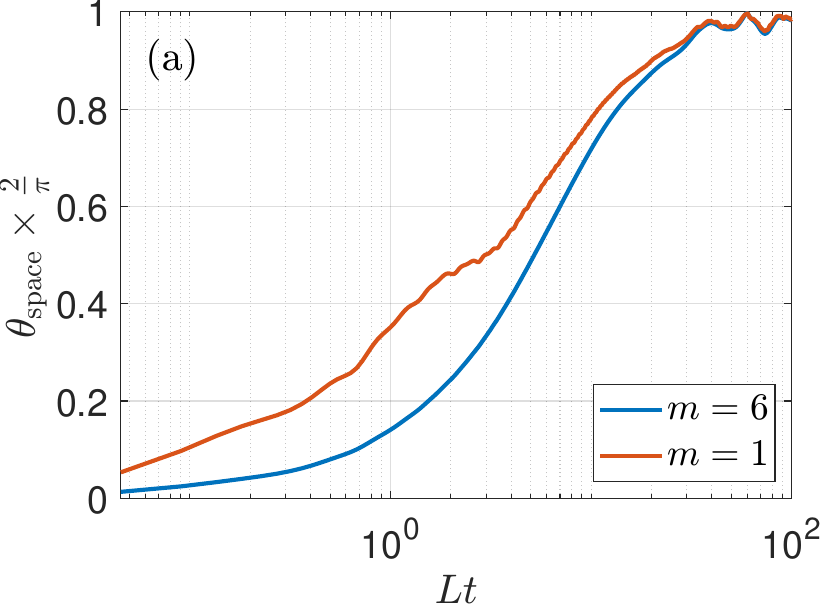}\hfill
\includegraphics[width=0.45\textwidth]{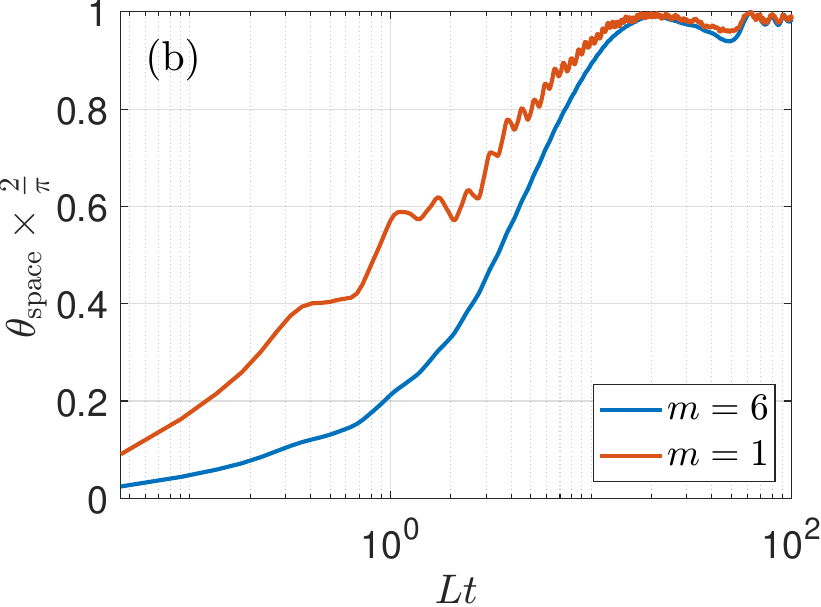}\\
\includegraphics[width=0.45\textwidth]{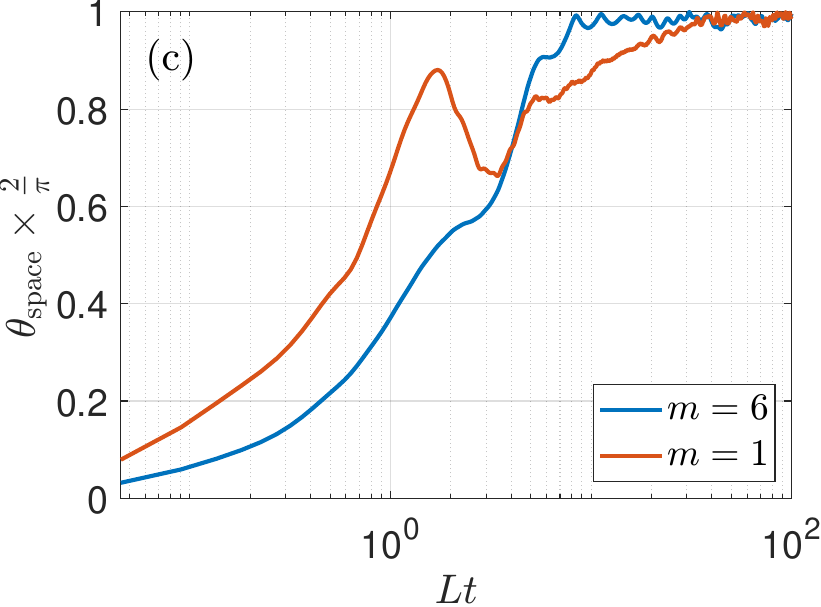}\hfill
\includegraphics[width=0.45\textwidth]{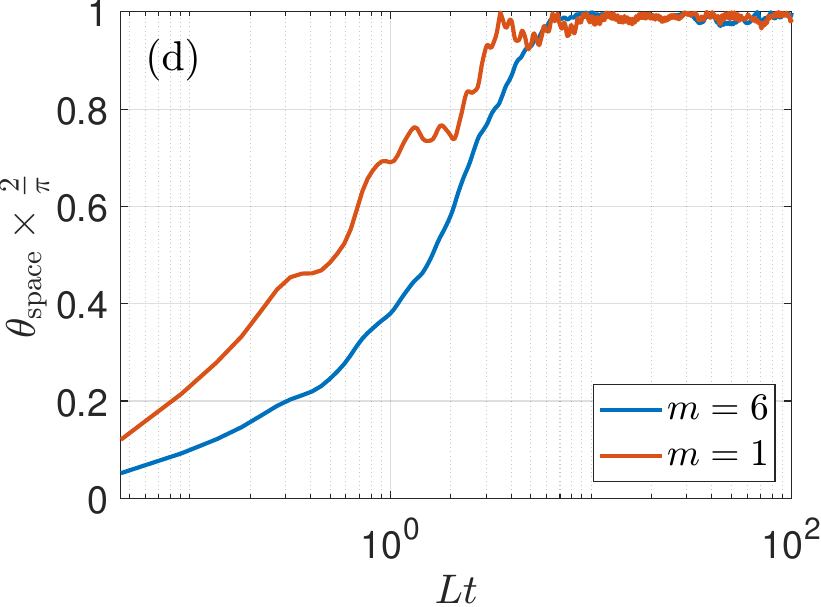}
 \caption{The subspace angle between $g_\theta^\epsilon$ and $\mathcal{K}^ng_\theta^\epsilon$, plotted against $Lt$ with $L$ the maximal Lyapunov exponent of the Lorenz system. The labels (a), (b), (c) and (d) correspond to \cref{fig:lorenz1}. The angle $\theta_{\mathrm{space}}\in[0,\pi/2]$ measures the angle between subspaces, with a value of zero corresponding to perfect coherency and a value of $\pi/2$ corresponding to perfect incoherency.\label{fig:lorenz2}}
\end{figure}

To measure the coherency of each approximate generalized eigenfunction $g_\theta^\epsilon$, we measure the subspace angle between $g_\theta^\epsilon$ and $\mathcal{K}^ng_\theta^\epsilon$. We do this over a larger data set of $10^5$ trajectory points to ensure that the subspace angle is accurate and does not correspond to fitting our original snapshot data. \cref{fig:lorenz2} shows these angles, where we have plotted against $Lt$ with $L$ the maximal Lyapunov exponent of the Lorenz system. In agreement with \cref{approx_cohernecy} and \cref{prop:evc_approx_atom}, we see that our computed generalized eigenfunctions are highly coherent over several Lyapunov time scales. Moreover, higher-order kernels lead to larger levels of coherency.

\subsection{Lid-driven cavity flow}

As our final example, we consider a 2D lid-driven cavity flow at various Reynolds numbers. The flow is a simple model of an incompressible viscous fluid confined to a rectangular box with a moving lid and is a standard benchmark \cite{tseng2001mixing,chella1985fluid,koseff1984lid}. As the Reynolds number increases, the Koopman spectrum undergoes a series of bifurcations from pure point to mixed to pure continuous spectrum \cite{shen1991hopf,arbabi2017study}. It was also shown in \cite[e.g., Figure 7]{arbabi2017study} that for high Reynolds number, it is challenging to capture the flow with discrete DMD expansions. We use the data from \cite{arbabi2017study}, namely, the vorticity field within the cavity at $Re=13000$, $16000$, $19000$ and $30000$ collected over 10000 snapshots. We have also added 20\% Gaussian random noise (SNR of 14) to demonstrate the robustness of Rigged DMD.

\cref{fig:cavity1} shows the spectral measures of the total kinetic energy, computed using \cref{alg:RiggedDMD} with a 10th order kernel, $\epsilon=0.001$ and a dictionary formed by time delays. At $Re=13000$, we see pure point spectrum built from a base frequency. As the Reynolds number increases, the spectrum becomes mixed and then continuous. \cref{fig:cavity2a,fig:cavity2b,fig:cavity2c,fig:cavity2d} show the generalized Koopman modes computed using \cref{alg:RiggedDMD2} with a 6th order kernel and $\epsilon=0.01$ and a dictionary of $10$, $100$, $200$ and $500$ POD modes for $Re=13000$, $16000$, $19000$ and $30000$, respectively. As our values of $\theta$, we pick the Strouhal numbers corresponding to the four peaks visible in the spectral measure at $Re=13000$. We have shown the results with the noisy data and the noise-free data. As expected, larger frequencies are more affected by noise. Nevertheless, we are able to compute these modes over the continuous spectrum in the presence of large noise. There are three reasons for robustness of Rigged DMD. The first is that mpEDMD is equivalent to a constrained total least squares problem (see the discussion in \cite[Section 6.2]{colbrook2023mpedmd}). The second is the smoothing parameter (high-order kernels help, see the discussion in \cite[Section 5.1.2]{colbrook2021rigorous}). The third is the averaging procedure in \cref{alg:RiggedDMD2}.

\begin{figure}
\centering
\includegraphics[width=0.45\textwidth]{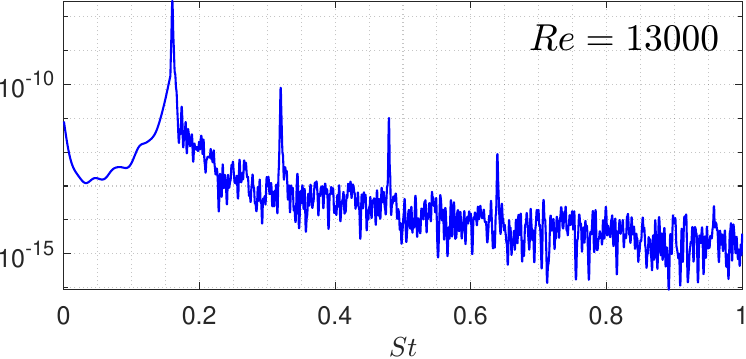}\hfill
\includegraphics[width=0.45\textwidth]{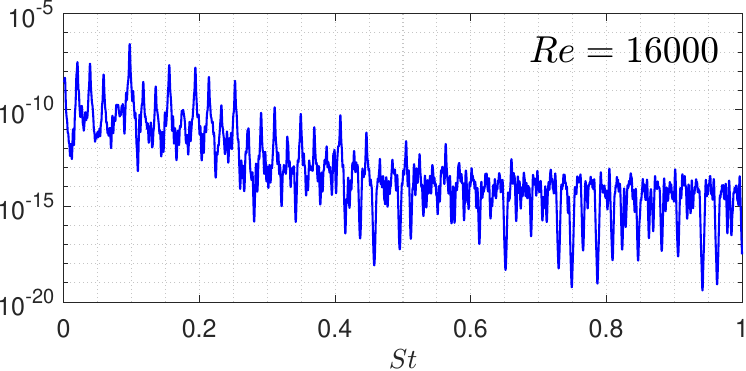}\\
\includegraphics[width=0.45\textwidth]{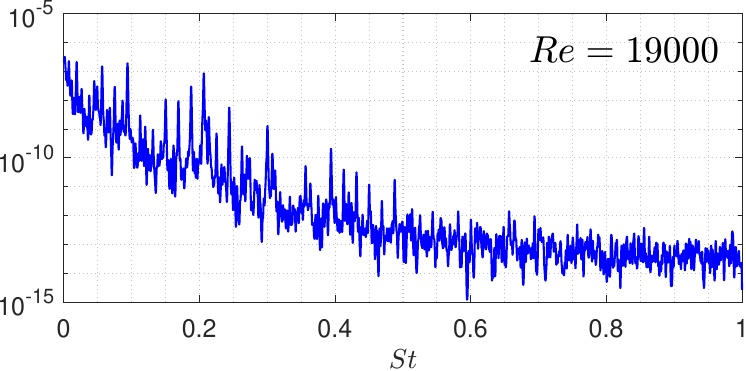}\hfill
\includegraphics[width=0.45\textwidth]{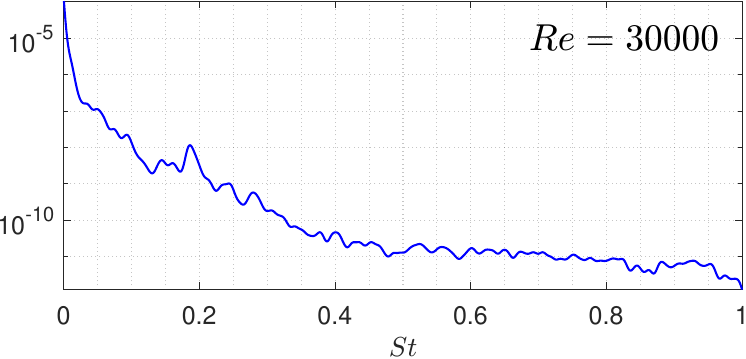}\\
 \caption{Spectral measures of the kinetic energy of the lid-driven cavity flow.\label{fig:cavity1}}
\end{figure}

\begin{figure}
\centering
\includegraphics[width=0.8\textwidth]{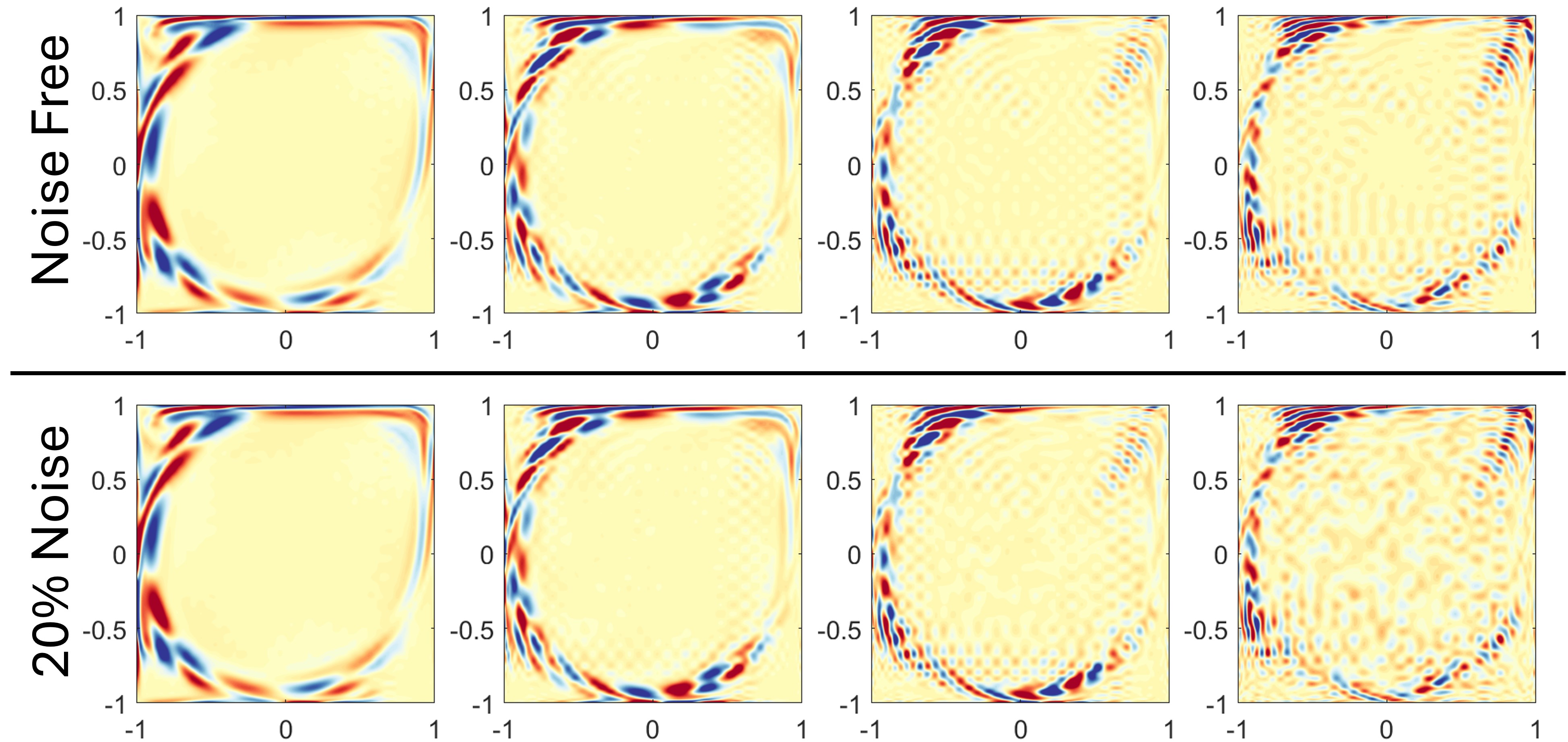}
\caption{Generalized Koopman modes for the cavity flow at $Re=13000$.\label{fig:cavity2a}}
\end{figure}

\begin{figure}
\centering
\includegraphics[width=0.8\textwidth]{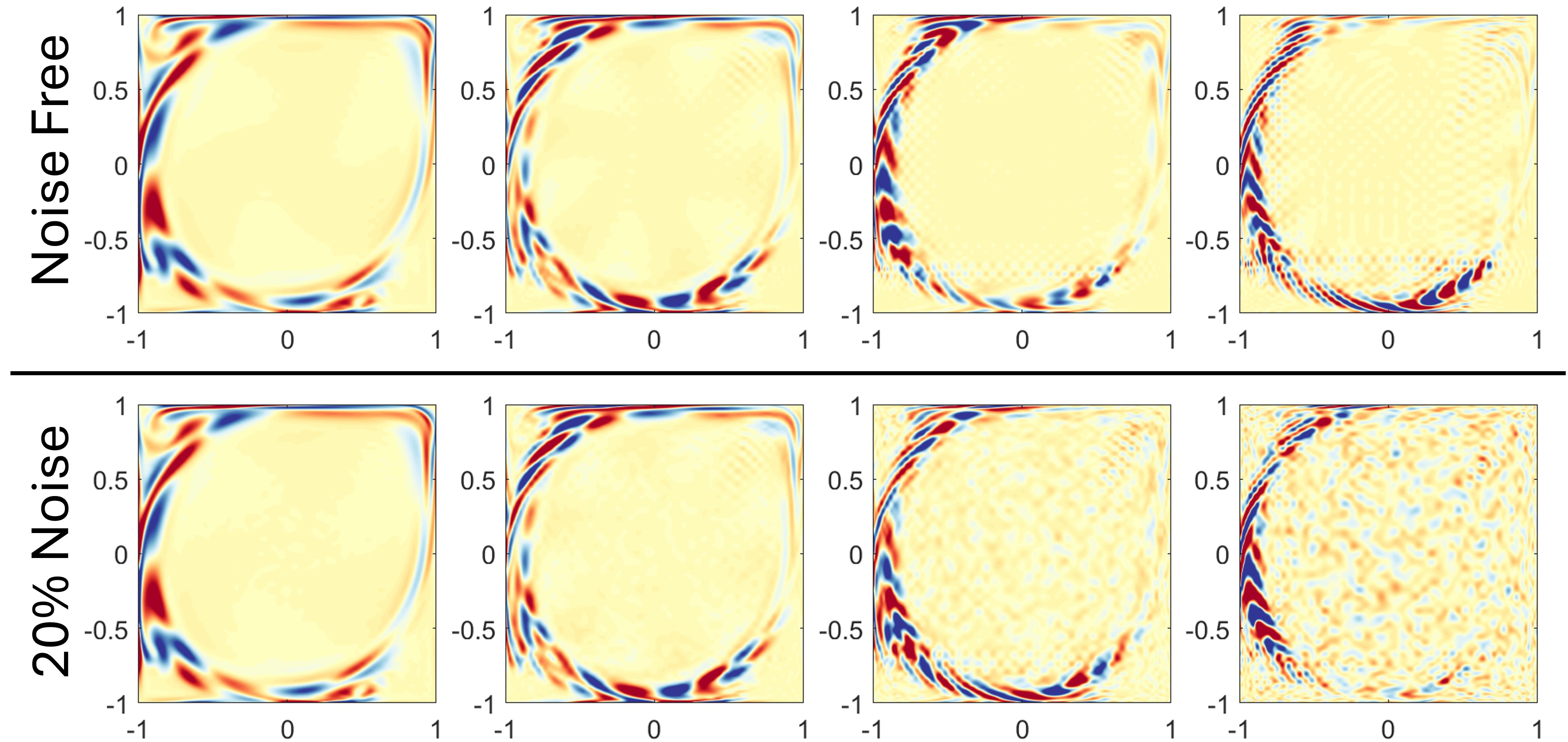}
\caption{Generalized Koopman modes for the cavity flow at $Re=16000$.\label{fig:cavity2b}}
\end{figure}

\begin{figure}
\centering
\includegraphics[width=0.8\textwidth]{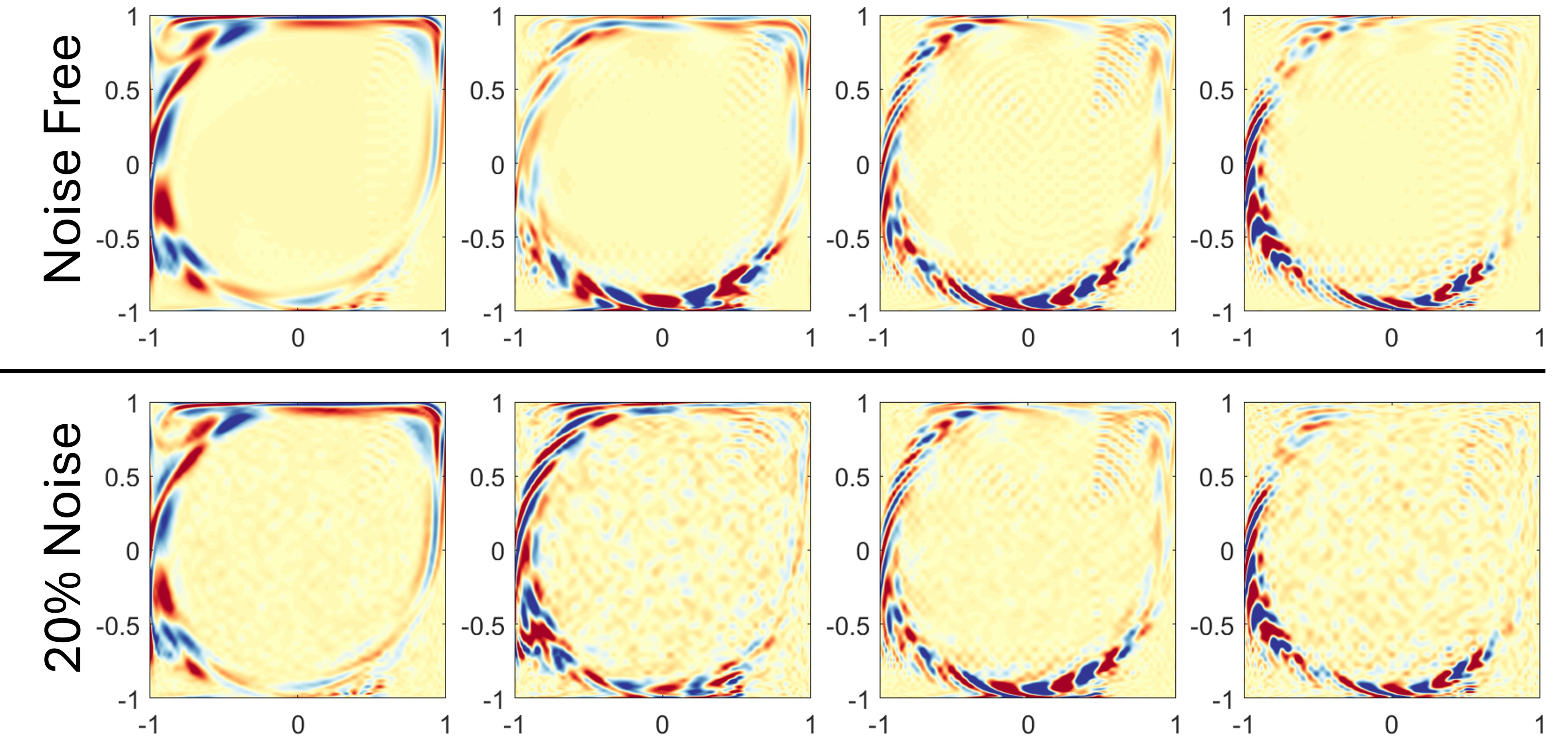}
\caption{Generalized Koopman modes for the cavity flow at $Re=19000$.\label{fig:cavity2c}}
\end{figure}

\begin{figure}
\centering
\includegraphics[width=0.8\textwidth]{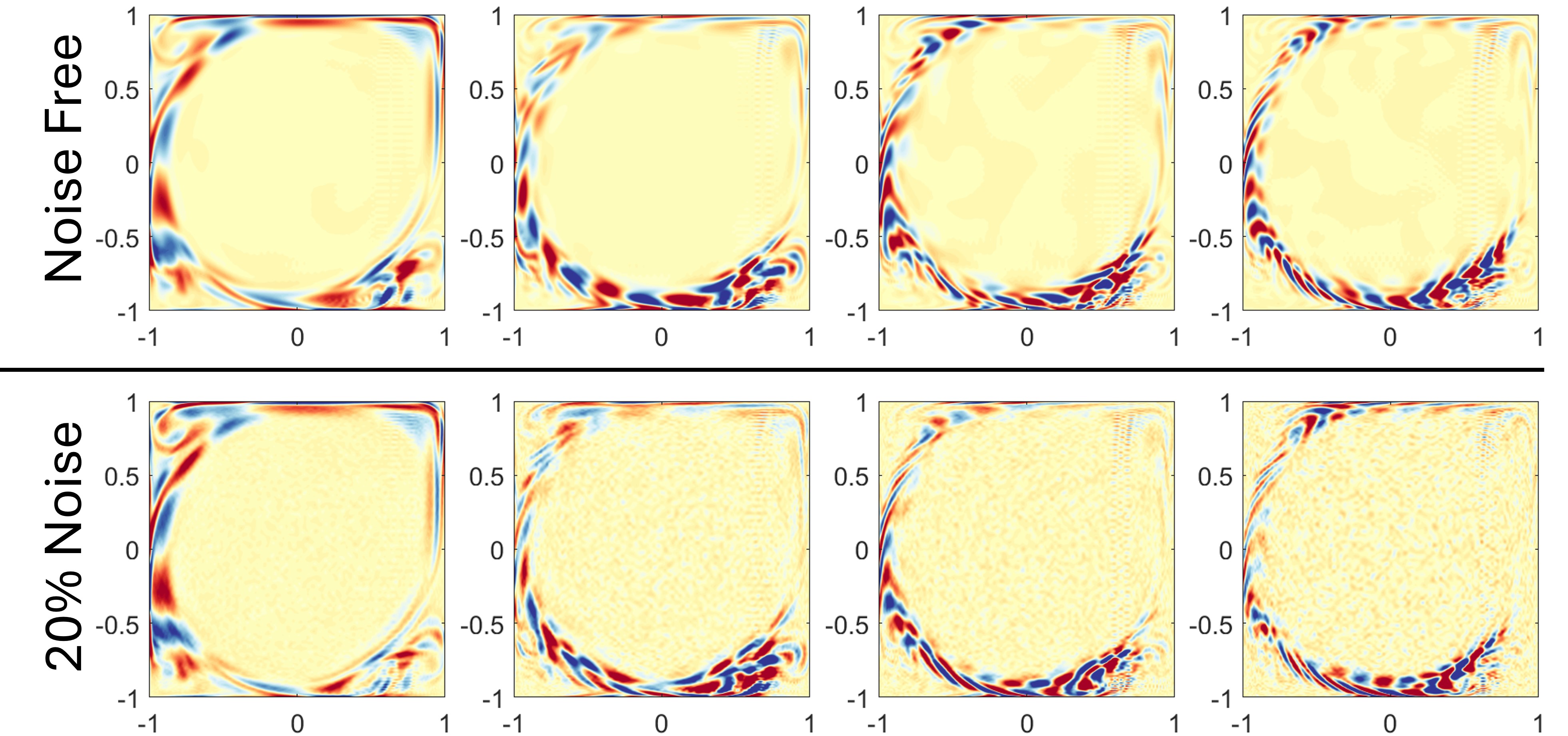}
\caption{Generalized Koopman modes for the cavity flow at $Re=30000$.\label{fig:cavity2d}}
\end{figure}

\section{Conclusions}\label{sec_conclusions}

We have developed an algorithm that computes generalized eigenfunction expansions (mode decompositions) of Koopman operators, even in the presence of continuous spectra. Named Rigged DMD, the algorithm leverages Gelfand's theorem, which asserts that a complete set of generalized eigenfunctions exists within a rigged Hilbert space. Rigged DMD employs unitary approximations of Koopman operators, computed using mpEDMD, to determine the Koopman resolvent. This resolvent is then used to compute smoothed approximations of generalized eigenfunctions for any spectral parameter. We have not only shown examples of rigged Hilbert spaces but also demonstrated how to realize this assumption for general unitary Koopman operators using appropriately weighted vectors derived from delay embedding. In addition to proving convergence theorems for generalized eigenfunctions and spectral measures, we have demonstrated the algorithm's applicability and robustness through several examples.

There is considerable potential for future work, and we briefly discuss two avenues. The first involves applying Rigged DMD to reduced order models and control problems involving continuous spectra. Specifically, Rigged DMD opens the door to control over the interactions between modes corresponding to the continuous and discrete spectra. Second, we relied on the Koopman operator being unitary (i.e., the system is measure-preserving and invertible). We aim to extend Rigged DMD to non-unitary and non-normal Koopman operators, a development that would require adapting these techniques to scenarios where the nuclear spectral theorem does not apply.

Finally, several methods now exist for computing spectral measures of Koopman operators. It would be interesting to explore which of these methods can be extended to compute generalized eigenfunctions. Furthermore, we believe the community is at a stage where comparing these approaches for spectral measures would be beneficial, helping to establish a guideline for identifying the most suitable methods for different types of problems.

\appendix

\section{Strong convergence of the mpEDMD resolvent}\label{sec:mpEDMD_app}
To prove \cref{cor_mpEDMD_res_conv}, we use the following lemma from \cite{colbrook2023mpedmd}. We stress that the second part of the lemma need not hold for methods such as EDMD. In particular, convergence in the strong operator topology (the convergence of EDMD) need not imply convergence of adjoints in the strong operator topology, even if the desired limit is unitary. The polar decomposition in mpEDMD is crucial for strong convergence of the adjoint of the Koopman operator.

\begin{lemma}
\label{lemma_SOT_conv}
Suppose that $\lim_{N\rightarrow\infty}\!\mathrm{dist}(h,V_{N})=0$ for all $h\in L^2(\Omega,\omega)$ and~\cref{quad_convergence} holds.  Then for any $g\in L^2(\Omega,\omega)$ and $\gv_N\in\mathbb{C}^N$ with $\lim_{N\rightarrow\infty}\|g-\Psiv \gv_N\|=0$,
\begin{align}
\lim_{N\rightarrow\infty}\limsup_{M\rightarrow\infty} \|\mathcal{K}^lg-\Psiv\Kv^l\gv_N\|&=0\quad \forall l\in\mathbb{N}\cup\{0\},\label{limit1}\\
\lim_{N\rightarrow\infty}\limsup_{M\rightarrow\infty} \|(\mathcal{K}^*)^lg-\Psiv\Kv^{-l}\gv_N\|&=0\quad \forall l\in\mathbb{N}\cup\{0\}.\label{limit2}
\end{align}
\end{lemma}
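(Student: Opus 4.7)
The argument proceeds by induction on $l$, with the base case $l=0$ given by hypothesis. Once the case $l=1$ is established for both limits, higher powers follow by iterating: given convergence $\Psiv\Kv^l\gv_N \to \mathcal{K}^l g$ in the double-limit sense, set $\hv_N = \Kv^l \gv_N$ and apply the $l=1$ result with $\hv_N$ playing the role of the approximating sequence for $\mathcal{K}^l g$; this yields $\Psiv \Kv^{l+1}\gv_N = \Psiv\Kv\hv_N \to \mathcal{K}^{l+1}g$, and analogously $\Psiv\Kv^{-(l+1)}\gv_N \to (\mathcal{K}^*)^{l+1}g$. It therefore suffices to treat $l=1$.

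For \cref{limit1} with $l=1$, the approach is to compare $\Kv$ to the raw EDMD matrix $\widetilde{\Kv} = (\Wv^{1/2}\Psiv_X)^\dagger\Wv^{1/2}\Psiv_Y$, exploiting that mpEDMD takes precisely the unitary factor in the polar decomposition $\widetilde{\Kv} = \Kv Q$ with respect to the inner product induced by $\Gv = \Psiv_X^*\Wv\Psiv_X$. Standard EDMD arguments combined with \cref{quad_convergence} give $\Psiv\widetilde{\Kv}\gv_N \to \mathcal{P}_{V_N}\mathcal{K}g$ in $L^2(\Omega,\omega)$ as $M\to\infty$, and density of $\bigcup_N V_N$ together with $\|\mathcal{K}\|=1$ gives $\mathcal{P}_{V_N}\mathcal{K}g\to \mathcal{K}g$. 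To bridge mpEDMD and EDMD, use the $\Gv$-unitarity relation $\Kv^*\Gv\Kv=\Gv$ to obtain
$$
\|\Psiv(\widetilde{\Kv}-\Kv)\gv_N\|^2 \;\approx\; \|(\widetilde{\Kv}-\Kv)\gv_N\|_\Gv^2 \;=\; \|(I-Q)\gv_N\|_\Gv^2.
$$
Since $\widetilde{\Kv}$ approximates a compression of the isometry $\mathcal{K}$, one has $\|\widetilde{\Kv}\|_\Gv\le 1$ in the double limit, so $0\le Q\le I$ and hence $(I-Q)^2\le I-Q$; therefore
$$
\|(I-Q)\gv_N\|_\Gv^2 \;\le\; \|\gv_N\|_\Gv^2 - \|\widetilde{\Kv}\gv_N\|_\Gv^2,
$$
with both terms on the right converging to $\|g\|^2$ in the double limit (the second via the isometry $\|\mathcal{K}g\|=\|g\|$ and the already-established convergence of $\Psiv\widetilde{\Kv}\gv_N$), yielding the desired cancellation.

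For \cref{limit2} with $l=1$, the decisive leverage is that $\Gv$-unitarity gives the isometric inverse $\Kv^{-1} = \Gv^{-1}\Kv^*\Gv$. Pairing with any test observable $\Psiv\fv\in V_N$, one computes
$$
\langle \Psiv\Kv^{-1}\gv_N,\Psiv\fv\rangle \;\approx\; \fv^*\Gv\Kv^{-1}\gv_N \;=\; (\Kv\fv)^*\Gv\gv_N \;\approx\; \langle\Psiv\gv_N,\Psiv\Kv\fv\rangle,
$$
which tends to $\langle g,\mathcal{K}f\rangle = \langle\mathcal{K}^*g,f\rangle$ by the $l=1$ case of \cref{limit1} applied to $\fv$. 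Density of $\bigcup_N V_N$ in $L^2(\Omega,\omega)$ together with the uniform norm bound $\|\Psiv\Kv^{-1}\gv_N\|\approx\|\gv_N\|_\Gv\to\|g\|$ promotes this to weak convergence $\Psiv\Kv^{-1}\gv_N \rightharpoonup \mathcal{K}^*g$. Simultaneously, $\|\Psiv\Kv^{-1}\gv_N\|\to\|g\|=\|\mathcal{K}^*g\|$ by the same isometry, and weak convergence together with convergence of norms promotes to strong convergence in $L^2(\Omega,\omega)$.

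The principal obstacle lies in \cref{limit2}: the analogous statement genuinely fails for EDMD, as the Jordan-block calculation in \cref{sec:EDMD_not_converge} shows that $\|\widetilde{\Kv}^{-1}\|$ may blow up with $N$. The mpEDMD construction circumvents this precisely because the polar projection forces $\Kv$ onto the unitary group, rendering $\Kv^{-1}$ a $\Gv$-isometry uniformly in both $N$ and $M$. A secondary subtlety is the careful handling of the order of limits: the bound $Q\le I$ holds exactly only in the $M\to\infty$ limit, so at finite $M$ a vanishing discrepancy must be absorbed into the $\limsup$ before sending $N\to\infty$.
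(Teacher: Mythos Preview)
The paper does not actually prove this lemma here; it is quoted from \cite{colbrook2023mpedmd} and used only as input to the proof of \cref{cor_mpEDMD_res_conv}. Your sketch is substantively correct and follows the structure of the argument in that reference: the polar factorization $\widetilde{\Kv}=\Kv Q$ in the $\Gv$-metric, the operator inequality $(I-Q)^2\le I-Q^2$ (valid once $0\le Q\le I$, equivalently $\|\widetilde{\Kv}\|_\Gv\le 1$) to control the mpEDMD--EDMD discrepancy via the isometry $\|\K g\|=\|g\|$, and the weak-plus-norm route to strong convergence for $\Kv^{-1}$ exploiting $\Gv$-unitarity are precisely the right ingredients.

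One point deserves more care. In your inductive step you set $\hv_N=\Kv^l\gv_N$, but this vector depends on $M$ through $\Kv$, so you cannot literally invoke the $l=1$ statement, which is phrased for $M$-independent coefficient vectors. The clean fix is to pass to the inner limit $M\to\infty$ first at each fixed $N$: then $\Gv$, $\widetilde{\Kv}$, and $\Kv$ converge to deterministic matrices, and one obtains a limiting $\Gv$-unitary operator $\mathcal{K}_N$ and the contraction $T_N=\mathcal{P}_{V_N}\K|_{V_N}$ on $V_N$. The bound $\|(\mathcal{K}_N-T_N)h_N\|^2\le\|h_N\|^2-\|T_Nh_N\|^2$ now applies to \emph{any} sequence $h_N\in V_N$ with $h_N\to h$, since $T_Nh_N=\mathcal{P}_{V_N}\K h_N\to\K h$ by boundedness of $\K$ and strong convergence $\mathcal{P}_{V_N}\to I$. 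The induction then runs entirely in the single $N\to\infty$ limit. The same remark applies to the weak-convergence step in \cref{limit2}: the test vector $\fv$ should represent a \emph{fixed} $f\in V_{N_0}$, padded by zeros for $N\ge N_0$, so that \cref{limit1} legitimately applies to it.
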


\begin{proof}[Proof of \cref{cor_mpEDMD_res_conv}]
Suppose first that $|z|>1$. Since $\|\mathcal{K}\|=1$ and $\|\Gv^{1/2}\Kv\Gv^{-1/2}\|=1$,
\begin{align*}
(\mathcal{K}-zI)^{-1}g&=\frac{-1}{z}(I-\mathcal{K}/z)^{-1}g=\frac{-1}{z}\sum_{l=0}^\infty \frac{1}{z^l}\mathcal{K}^lg\\
(\Kv-z\Iv)^{-1}g&=\frac{-1}{z}(\Iv-\Kv/z)^{-1}\gv_N=\frac{-1}{z}\sum_{l=0}^\infty \frac{1}{z^l}\Kv^l\gv_N.
\end{align*}
The tails of these series can be uniformly bounded, and for any $l\in\mathbb{N}\cup\{0\}$, \cref{limit1} implies that
$$
\lim_{N\rightarrow\infty}\limsup_{M\rightarrow\infty}\left\|\frac{1}{z^l}\mathcal{K}^lg-\Psiv\frac{1}{z^l}\Kv^l\gv_N\right\|=0.
$$
The convergence in \cref{mpEDMD_res_convergence} now follows. Similarly, if $|z|<1$, we may write
\begin{align*}
(\mathcal{K}-zI)^{-1}g&=(I-z\mathcal{K}^*)^{-1}\mathcal{K}^*g=\sum_{l=0}^\infty z^l(\mathcal{K}^*)^{l+1}g\\
(\Kv-z\Iv)^{-1}g&=(\Iv-z\Kv^{-1})^{-1}\Kv^{-1}\gv_N=\sum_{l=0}^\infty {z^l}\Kv^{-l-1}\gv_N.
\end{align*}
We argue similarly using \cref{limit2} to see that \cref{mpEDMD_res_convergence} holds.
\end{proof}

\section{Construction of high-order periodic kernels}
\label{sec:periodic_kernels_app}

\begin{proof}[Proof of \cref{prop:per_summation}]
The decay condition \cref{eq:decay_bound} in \cref{def:mth_order_kernel} ensures that the series defining $K_\epsilon^{\mathbb{T}}$ is absolutely convergent and hence that $K_\epsilon^{\mathbb{T}}$ is continuous. Similarly,
$$
\int_{-\pi}^\pi K_{\epsilon}^\mathbb{T}(\theta)\dd\theta=\int_{\mathbb{R}}K_\epsilon(x)\dd x=1.
$$
We may write
$$
\int_{-\pi}^\pi K_{\epsilon}^\mathbb{T}(-\theta)e^{in\theta}\dd\theta-1=\int_{\mathbb{R}}K_\epsilon(x)\left[e^{-inx}-1\right]\dd x=[K_\epsilon\ast\rho](0),
$$
where $\rho(x)=e^{inx}-1$. Since $\rho$ is a bounded, smooth function, and $\rho(0)=0$, we may apply \cite[Theorem 5.2]{colbrook2021computing} to deduce that \cref{eq:fourier_cond} holds.

We are left with proving the concentration bound \cref{def:unitary_decay}. By directly plugging \cref{eq:decay_bound} into the definition of $K_{\epsilon}^\mathbb{T}$, we see that
\begin{equation}
\label{eq:periodic_splo2}
|K_{\epsilon}^\mathbb{T}(\theta)|\lesssim \frac{1}{\epsilon}\sum_{n\in\mathbb{Z}} \frac{1}{(1+\left|\theta+2\pi n\right|/\epsilon)^{m+1}}= \sum_{n\in\mathbb{Z}}\frac{\epsilon^m}{(\epsilon+|\theta+2\pi n|)^{m+1}}.
\end{equation}
For any non-zero integer $n$, we have $|\theta+2\pi n|\geq \pi|n|$ and hence
$$
\sum_{n\in\mathbb{Z}\backslash{\{0\}}}\frac{1}{(\epsilon+|\theta+2\pi n|)^{m+1}}\leq
\sum_{n\in\mathbb{Z}\backslash{\{0\}}}\frac{1}{|\theta+2\pi n|^{m+1}}<\infty.
$$
It follows that
$$
|K_{\epsilon}^\mathbb{T}(\theta)|\lesssim \epsilon^m+\frac{\epsilon^m}{(\epsilon+|\theta|)^{m+1}},
$$
where the second term on the right-hand side comes from the $n=0$ term in the series in \cref{eq:periodic_splo2}. The decay condition \cref{def:unitary_decay} now follows.
\end{proof}

\begin{proof}[Proof of \cref{prop:carath_form}]
The corresponding periodic kernel is given by
$$
K_\epsilon^{\mathbb{T}}(\theta)=\frac{1}{2\pi i} \sum_{j=1}^{m}\sum_{n\in\mathbb{Z}}\left(\frac{\alpha_j}{\theta+2\pi n-\epsilon a_j}-\frac{\beta_j}{\theta+2\pi n-\epsilon b_j}\right),
$$
where the rearrangement of the sum is justified by its absolute convergence. Since
$$
\sum_{n\in\mathbb{Z}} \frac{1}{z+2\pi n}=\frac{1}{2}\cot\left(\frac{z}{2}\right)=\frac{-i}{2}\frac{1+e^{iz}}{1-e^{iz}},
$$
the kernel can be re-written as
\begin{align*}
K_\epsilon^{\mathbb{T}}(\theta)&\!=\!\frac{1}{2\pi i}\!\sum_{j=1}^{m}\left(\!\alpha_j\frac{1}{2}\cot\left(\frac{\theta-\epsilon a_j}{2}\right)\!-\!\beta_j\frac{1}{2}\cot\left(\frac{\theta-\epsilon b_j}{2}\right) \!\right)\notag\\
&\!=\!\frac{-1}{4\pi}\!\sum_{j=1}^{m}\!\left(\!\alpha_j\frac{1+e^{i\theta-i\epsilon a_j}}{1-e^{i\theta-i\epsilon a_j}}\!-\!\beta_j\frac{1+e^{i\theta-i\epsilon b_j}}{1-e^{i\theta-i\epsilon b_j}}\!\right)\!=\!\frac{-1}{4\pi}\!\sum_{j=1}^{m}\!\left(\!\alpha_j\frac{e^{-i\theta}+e^{-i\epsilon a_j}}{e^{-i\theta}-e^{-i\epsilon a_j}}\!-\!\beta_j\frac{e^{-i\theta}+e^{-i\epsilon b_j}}{e^{-i\theta}-e^{-i\epsilon b_j}}\!\right)\!\!.
\end{align*}
The formula in \cref{carathform1} follows. If $b_j = \overline{a_j}$ and $\beta_j=\overline{\alpha_j}$, since $\langle F_{\mathcal{E}}(z) g,g\rangle = -\overline{\langle F_{\mathcal{E}}(1/\overline{z}) g,g\rangle}$,
\begin{align*}
\left[K_\epsilon^{\mathbb{T}}\ast\xi_g\right](\theta_0)&=\frac{-1}{2\pi}\sum_{j=1}^{m}{\rm Re}\left(\alpha_j\left\langle F_\mathcal{E}(e^{i\theta-i\epsilon a_j})g,g\right\rangle\right)\\
&=\frac{-1}{2\pi}\sum_{j=1}^{m}{\rm Re}\left(\alpha_j \left\langle (\K-e^{i\theta-i\epsilon a_j}I)^{-1}g,(\K+e^{i\theta-i\epsilon a_j}I)^*g\right\rangle  \right).
\end{align*}
\end{proof}

\section{Convergence of wave-packet approximations}\label{sec:conv_WPA_app}
We use the following notation throughout this section. Given $m\in\mathbb{N}$, we let $\{K_{\epsilon}^\mathbb{T}\}$ be an $m$th order kernel for $\pp$. Let $\mathcal{K}:\mathcal{H}\rightarrow\mathcal{H}$ be a unitary operator on a rigged Hilbert space $\mathcal{S}\subset \mathcal{H}\subset\mathcal{S}^*$ with $\mathcal{K}\mathcal{S}\subset\mathcal{S}$. Denote the absolutely continuous component of ${\rm Sp}(\mathcal{K})$ by ${\rm Sp}_{\rm ac}(\mathcal{K})$ and, given $g,\phi\in\mathcal{S}$, let $\rho_{g,\phi}$ be the Radon--Nikodym derivative of the absolutely continuous component of the spectral measure $\xi_{g,\phi}=\langle\mathcal{E}g,\phi\rangle$, with $\xi_g = \xi_{g,g}$ for brevity.

Before proving the main results, we note that the smoothed Koopman mode expansion in~\cref{eq:smoothedKMD} can be simplified with, essentially, a ``change-of-basis" for the generalized eigenspace:
\begin{equation}\label{eqn:simple_wave_packet}
g_\theta^\epsilon=\int_\pp K_\epsilon(\theta-\varphi)\, \langle u_{\varphi}^*|g\rangle u_{\varphi}\dd\xi_{g}(\varphi) \qquad \text{for every}\qquad g\in\mathcal{S},
\end{equation}
where $u_\theta$ depends on $g$ and satisfies
$\langle \mathcal{K}u_\theta | \phi \rangle = \lambda \langle \psi | \phi \rangle$ for all $\phi \in \mathcal{S}$
with $\lambda=\exp(i\theta)$. The action of $g_\theta^\epsilon$ on a function $f\in\mathcal{S}$ is given by
\begin{equation}\label{eqn:simple_wave_packet_action}
\langle g_\theta^\epsilon | f \rangle =\int_\pp  K_\epsilon(\theta-\varphi)\, \langle u_{\varphi}^*|g\rangle \langle u_{\varphi}|f\rangle \dd\xi_{g}(\varphi)\qquad \text{for every}\qquad g\in\mathcal{S}.
\end{equation}
The corresponding expansion of the spectral measure in the rigged Hilbert space is
\begin{equation}\label{eq:simple_diag_pvm}
[\mathcal{E}(S)]g = \int_S  \langle u_{\varphi}^*|g\rangle u_{\varphi} \dd\xi_{g}(\varphi)\qquad \forall g\in\mathcal{S}.
\end{equation}
After taking an inner product with $g$, \cref{eq:simple_diag_pvm} implies that
$$
\xi_{g}(S)=
\langle [\mathcal{E}(S)]g,g \rangle=
\langle [\mathcal{E}(S)]g|\overline{g} \rangle=
\int_S  \langle u_{\varphi}^*|g\rangle \langle u_{\varphi}|\overline{g}\rangle \dd\xi_{g}(\varphi)=
\int_S  |\langle u_{\varphi}^*|g\rangle|^2 \dd\xi_{g}(\varphi).
$$
It follows that $|\langle u_{\varphi}^*|g\rangle|^2=1$ $\xi_{g}$-almost everywhere.\footnote{This is a convenient consequence of using the spectral measure $\xi_g$ to normalize the generalized eigenfunctions.} We can, without loss of generality, scale each generalized eigenfunction by a phase factor so that $\langle u_{\varphi}^*|g\rangle=1$. As a consequence of this convention, upon taking an inner product with $f\in\mathcal{S}$, we see that $\smash{\dd\xi_{g,f}(\varphi) = \langle u_{\varphi}|\overline{f}\rangle \dd\xi_g(\varphi)}$.

\begin{proof}[Proof of~\cref{thm:conv_rates}]
Since the spectrum of $\mathcal{K}$ is absolutely continuous on the interval $\mathcal{I}$, the arguments above show that $
\rho_{g,\phi}(\varphi)=\rho_{g}(\varphi)\langle u_{\varphi}|\overline{\phi}\rangle$ for $\varphi\in\mathcal{I}$.
\cref{thm:conv_rates} now follows by applying~\cref{thm:unitary_pointwise_convergence} to the complex Borel measure $\xi_{g,{\phi}}$ after substitution in~\cref{eqn:simple_wave_packet_action}.
\end{proof}

\begin{proof}[Proof of~\cref{thm:conv_rates2}]
Under the hypotheses of the theorem, the integrand of~\cref{eqn:simple_wave_packet} is equivalent to convolution with a complex Borel measure (with finite total variation) on $\pp$ for each point $\xv\in\mathcal{A}$. The uniform integrability and continuity assumptions ensure that the total variation and H\"older norms are bounded independently of $\xv\in\mathcal{A}$. \cref{thm:conv_rates2} now follows from~\cref{thm:unitary_pointwise_convergence}.
\end{proof}

We now prove the convergence of the approximate eigenvector sequence.
\begin{proof}[Proof of~\cref{prop:evc_approx_atom}]
Using the functional calculus, we have.
$$
(\K-e^{i\theta_0}I)g_{\theta_0}^\epsilon=\left[\int_{\pp} (e^{i\varphi}-e^{i\theta_0})K_{\epsilon}^\mathbb{T}(\theta_0-\varphi)\dd \mathcal{E}(\varphi)\right]g.
$$
This implies that
$$
\|(\K-e^{i\theta_0}I)g_{\theta_0}^\epsilon\|^2=\int_{\pp} |e^{i\varphi}-e^{i\theta_0}|^2|K_{\epsilon}^\mathbb{T}(\theta_0-\varphi)|^2\dd \xi_g(\varphi).
$$
Similarly, we have
$$
\|g_{\theta_0}^\epsilon\|^2=\int_{\pp} |K_{\epsilon}^\mathbb{T}(\theta_0-\varphi)|^2\dd \xi_g(\varphi).
$$
Suppose that $\epsilon\eta<\delta$ and let
$$
I_\delta=\int_{|\theta_0-\varphi|\leq \delta} |K_{\epsilon}^\mathbb{T}(\theta_0-\varphi)|^2\dd \xi_g(\varphi)\geq 
\left(\inf_{|\theta|<\eta\epsilon}{\epsilon|K_\epsilon^\mathbb{T}(\theta)|}\right)^2\frac{1}{\epsilon^2}\int_{|\theta_0-\varphi|\leq \eta\epsilon} 1\dd \xi_g(\varphi).
$$
The two bounds in \cref{prop:evc_approx_atom} imply that $\liminf_{\epsilon\downarrow 0}I_\delta>0$. We have
\begin{align*}
&\int_{\pp} |e^{i\varphi}-e^{i\theta_0}|^2|K_{\epsilon}^\mathbb{T}(\theta_0-\varphi)|^2\dd \xi_g(\varphi)\\
&\quad\quad\quad\leq \left(\sup_{|\theta|\leq \delta}|1-e^{i\theta}|^2\right)I_\delta+\int_{|\theta_0-\varphi|> \delta}|e^{i\varphi}-e^{i\theta_0}|^2|K_{\epsilon}^\mathbb{T}(\theta_0-\varphi)|^2\dd \xi_g(\varphi)\\
&\quad\quad\quad\lesssim
\delta^2I_\delta+\int_{|\theta_0-\varphi|> \delta}|e^{i\varphi}-e^{i\theta_0}|^2|K_{\epsilon}^\mathbb{T}(\theta_0-\varphi)|^2\dd \xi_g(\varphi).
\end{align*}
Using the concentration condition in \cref{def:unitary_decay}, we see that
$$
\lim_{\epsilon\downarrow 0} \int_{|\theta_0-\varphi|> \delta}|e^{i\varphi}-e^{i\theta_0}|^2|K_{\epsilon}^\mathbb{T}(\theta_0-\varphi)|^2\dd \xi_g(\varphi)=0.
$$
Similarly, we have
$$
\int_{\pp} |K_{\epsilon}^\mathbb{T}(\theta_0-\varphi)|^2\dd \xi_g(\varphi)\geq I_\delta-
\int_{|\theta_0-\varphi|> \delta}|K_{\epsilon}^\mathbb{T}(\theta_0-\varphi)|^2\dd \xi_g(\varphi)
$$
and 
$$
\lim_{\epsilon\downarrow 0} \int_{|\theta_0-\varphi|> \delta}|K_{\epsilon}^\mathbb{T}(\theta_0-\varphi)|^2\dd \xi_g(\varphi)=0.
$$
Hence,
$$
\limsup_{\epsilon\downarrow 0}\frac{\|(\K-e^{i\theta_0}I)g_{\theta_0}^\epsilon\|^2}{\|g_{\theta_0}^\epsilon\|^2}
\lesssim\limsup_{\epsilon\downarrow 0}\frac{\delta^2I_\delta+\int_{|\theta_0-\varphi|> \delta}|e^{i\varphi}-e^{i\theta_0}|^2|K_{\epsilon}^\mathbb{T}(\theta_0-\varphi)|^2\dd \xi_g(\varphi)}{I_\delta-
\int_{|\theta_0-\varphi|> \delta}|K_{\epsilon}^\mathbb{T}(\theta_0-\varphi)|^2\dd \xi_g(\varphi)}= \delta^2.
$$
Since $\delta>0$ was arbitrary, the result now follows.
\end{proof}

\begin{proof}[Proof of \cref{prop:evc_atom}]
Taking $\mathcal{P}_{e^{i\theta_0}}=0$ in the case that $e^{i\theta_0}$ is not an eigenvalue,
$$
\frac{1}{K_\epsilon^\mathbb{T}(0)}[K_{\epsilon}^\mathbb{T}\ast \mathcal{E}](\theta_0)\mathcal{P}_{e^{i\theta_0}}g=
\frac{1}{K_\epsilon^\mathbb{T}(0)}\int_{\pp}K_{\epsilon}^\mathbb{T}(\theta_0-\varphi)\dd \mathcal{E}(\varphi) \mathcal{P}_{e^{i\theta_0}}g=\mathcal{P}_{e^{i\theta_0}}g.
$$
It follows that we can define
$$
g_\epsilon'=\frac{1}{K_\epsilon^\mathbb{T}(0)}[K_{\epsilon}^\mathbb{T}\ast \mathcal{E}](\theta_0)g-\mathcal{P}_{e^{i\theta_0}}g=\frac{1}{K_\epsilon^\mathbb{T}(0)}[K_{\epsilon}^\mathbb{T}\ast \mathcal{E}](\theta_0)(g-\mathcal{P}_{e^{i\theta_0}}g).
$$
From the functional calculus, we have
\begin{equation}
\label{atom_recov}
\|g_\epsilon'\|^2=\int_{\pp} \frac{|K_{\epsilon}^\mathbb{T}(\theta_0-\varphi)|^2}{|K_\epsilon^\mathbb{T}(0)|^2}\dd \xi_{g-\mathcal{P}_{e^{i\theta_0}}g}(\varphi).
\end{equation}
The concentration bound in \cref{def:unitary_decay} and the condition
$
\liminf_{\epsilon\downarrow 0}{\epsilon|K_\epsilon^\mathbb{T}(0)|}>0.
$
imply that
$$
\frac{|K_{\epsilon}^\mathbb{T}(\theta_0-\varphi)|^2}{|K_\epsilon^\mathbb{T}(0)|^2}\lesssim \frac{1}{\epsilon^2|K_\epsilon^\mathbb{T}(0)|^2}\frac{1}{(1+|\theta_0-\varphi|/\epsilon|)^{2m+2}}.
$$
is uniformly bounded and converges to zero as $\epsilon\downarrow 0$ whenever $\varphi\neq \theta_0$. Since $\xi_{g-\mathcal{P}_{e^{i\theta_0}}g}(\{\theta_0\})=0$, we can apply the dominated convergence theorem to the right-hand side of \eqref{atom_recov} to see that $\lim_{\epsilon\downarrow}\|g_\epsilon'\|^2=0$, which proves the result.
\end{proof}

\section{Construction of a rigged Hilbert spaces for sparse unitary operators}\label{sec:unitary_RHS_app}

\begin{proof}[Proof of \cref{lemma:weightedl2}]
Let $\{e_n: n\in\mathbb{N}\}$ denote the canonical basis for $\ell^2(\mathbb{N})$. Then, for any weight function $\rho$, the vectors $\{e_n/\sqrt{\rho(n)}:n\in\mathbb{N}\}$ form an orthonormal basis of $l^2_{\rho}(\mathbb{N})$. We may represent the inclusion map $\iota$ as
$$
\iota u = \sum_{j=1}^\infty   \sqrt{\frac{\tau(j)}{\tau'(j)}}\left\langle u, \frac{e_j}{\sqrt{\tau'(j)}} \right\rangle_{\tau'}   \frac{e_j}{\sqrt{\tau(j)}}.
$$
It follows that if
\begin{equation}
\label{nuclear_needed1}
\sum_{j=1}^\infty   \sqrt{\frac{\tau(j)}{\tau'(j)}}<\infty,
\end{equation}
then $\iota$ is nuclear. To achieve part (ii), it is enough to consider finite linear combinations of the basis vectors $\{e_n/\sqrt{\tau'(n)}:n\in\mathbb{N}\}$ of $l^2_{\tau'}(\mathbb{N})$. Let
$
u=\sum_{j=1}^N u_j e_j.
$
Then
$$
\|u\|_{\tau'}^2 = \sum_{j=1}^N |u_j|^2\tau'(j),\qquad \|Au\|_{\tau}^2 = \sum_{j=1}^N |(Au)_j|^2\tau(j).
$$
The assumption that $A$ is sparse implies that there exists an increasing function $f:\mathbb{N}\rightarrow\mathbb{N}$ with $f(n)\geq n$ such that $A_{jk}=0$ if $j>f(k)$ and hence we can write
$$
(Au)_j=\sum_{k\in\{1,\ldots,N\},f(k)\geq j} A_{jk}u_k.
$$
Hence by H\"older's inequality,
$$
|(Au)_j|^2\leq  \sum_{k\in\{1,\ldots,N\},f(k)\geq j} |A_{jk}|^2\sum_{k\in\{1,\ldots,N\},f(k)\geq j} |u_k|^2\leq \|A^*\|^2\sum_{k\in\{1,\ldots,N\},f(k)\geq j} |u_k|^2.
$$
Since $A$ is unitary on $l^2(\mathbb{N})$, $\|A^*\|=1$. It follows that
$$
\|Au\|_{\tau}^2\leq  \sum_{j=1}^N\sum_{k\in\{1,\ldots,N\},f(k)\geq j} |u_k|^2\tau(j)=\sum_{k=1}^N|u_k|^2\sum_{j=1}^{f(k)}\tau(j).
$$
It follows that if
\begin{equation}
\label{nuclear_needed2}
\sum_{j=1}^{f(k)}\tau(j)\leq\tau'(k),
\end{equation}
then $\|A u\|_\tau\leq \|u\|_{\tau'}$. To finish the proof, we select $\tau'$ so that both \cref{nuclear_needed1} and \cref{nuclear_needed2} hold.
\end{proof}

\section*{Acknowledgments}
We wish to thank the UK Spectral Theory Network (supported by the Isaac Newton Institute and EPSRC grant EP/V521929/1) and the COST action MAT-DYN-NET (CA18232), for facilitating in-person collaboration between the authors. 
\bibliographystyle{siamplain}
\bibliography{Koopman}

\begin{thebibliography}{100}

\bibitem{antoniou1997resonances}
{\sc I.~Antoniou, L.~Dmitrieva, Y.~Kuperin, and Y.~Melnikov}, {\em Resonances
  and the extension of dynamics to rigged {H}ilbert space}, Computers \&
  Mathematics with Applications, 34 (1997), pp.~399--425.

\bibitem{antoniou1997generalized}
{\sc I.~Antoniou, B.~Qiao, and Z.~Suchanecki}, {\em Generalized spectral
  decomposition and intrinsic irreversibility of the {A}rnold {C}at {M}ap},
  Chaos, Solitons \& Fractals, 8 (1997), pp.~77--90.

\bibitem{antoniou1993generalized}
{\sc I.~Antoniou and S.~Tasaki}, {\em Generalized spectral decompositions of
  mixing dynamical systems}, International Journal of Quantum Chemistry, 46
  (1993), pp.~425--474.

\bibitem{antoniou1993spectral}
{\sc I.~Antoniou and S.~Tasaki}, {\em Spectral decomposition of the {R}enyi
  map}, Journal of Physics A: Mathematical and General, 26 (1993), p.~73.

\bibitem{arbabi2017ergodic}
{\sc H.~Arbabi and I.~Mezi{\'c}}, {\em Ergodic theory, dynamic mode
  decomposition, and computation of spectral properties of the {K}oopman
  operator}, SIAM Journal on Applied Dynamical Systems, 16 (2017),
  pp.~2096--2126, \url{https://doi.org/10.1137/17m1125236}.

\bibitem{arbabi2017study}
{\sc H.~Arbabi and I.~Mezi{\'c}}, {\em Study of dynamics in post-transient
  flows using {K}oopman mode decomposition}, Physical Review Fluids, 2 (2017),
  p.~124402, \url{https://doi.org/10.1103/physrevfluids.2.124402}.

\bibitem{arnold1989mathematical}
{\sc V.~I. Arnold}, {\em Mathematical Methods of Classical Mechanics}, Springer
  New York, 1989, \url{https://doi.org/10.1007/978-1-4757-2063-1}.

\bibitem{arnol2013mathematical}
{\sc V.~I. Arnol'd}, {\em Mathematical methods of classical mechanics},
  vol.~60, Springer Science \& Business Media, 2013.

\bibitem{arnold1968ergodic}
{\sc V.~I. Arnold and A.~Avez}, {\em Ergodic problems of classical mechanics},
  (No Title),  (1968).

\bibitem{baladi2000positive}
{\sc V.~Baladi}, {\em Positive {T}ransfer {O}perators and {D}ecay of
  {C}orrelations}, vol.~16, World scientific, July 2000,
  \url{https://doi.org/10.1142/3657}.

\bibitem{bandtlow1997resonances}
{\sc O.~F. Bandtlow, I.~Antoniou, and Z.~Suchanecki}, {\em Resonances of
  dynamical systems and {F}redholm-{R}iesz operators on rigged {H}ilbert
  spaces}, Computers \& Mathematics with Applications, 34 (1997), pp.~95--102.

\bibitem{bandtlow2023edmd}
{\sc O.~F. Bandtlow, W.~Just, and J.~Slipantschuk}, {\em {EDMD} for expanding
  circle maps and their complex perturbations}, arXiv preprint
  arXiv:2308.01467,  (2023), \url{https://doi.org/10.48550/ARXIV.2308.01467},
  \url{https://arxiv.org/abs/2308.01467}.

\bibitem{berezanskii1968expansions}
{\sc Y.~M. Berezanskii}, {\em Expansions in eigenfunctions of selfadjoint
  operators, transl. math}, Monographs AMS, 17 (1968).

\bibitem{bohm1989dirac}
{\sc A.~Bohm, J.~D. Dollard, and M.~Gadella}, {\em Dirac Kets, Gamow Vectors
  and Gel'fand Triplets: The Rigged Hilbert Space Formulation of Quantum
  Mechanics Lectures in Mathematical Physics at the University of Texas at
  Austin}, Springer, 1989.

\bibitem{browder1954eigenfunction}
{\sc F.~E. Browder}, {\em The eigenfunction expansion theorem for the general
  self-adjoint singular elliptic partial differential operator. i. the
  analytical foundation}, Proceedings of the National Academy of Sciences, 40
  (1954), pp.~454--459.

\bibitem{brunton2017chaos}
{\sc S.~L. Brunton, B.~W. Brunton, J.~L. Proctor, E.~Kaiser, and J.~N. Kutz},
  {\em Chaos as an intermittently forced linear system}, Nature Communications,
  8 (2017), pp.~1--9, \url{https://doi.org/10.1038/s41467-017-00030-8}.

\bibitem{brunton2021modern}
{\sc S.~L. Brunton, M.~Budi{\v{s}}i{\'c}, E.~Kaiser, and J.~N. Kutz}, {\em
  Modern {K}oopman theory for dynamical systems}, SIAM Review, 64 (2022),
  pp.~229--340, \url{https://doi.org/10.1137/21m1401243}.

\bibitem{budivsic2012applied}
{\sc M.~Budi{\v{s}}i{\'c}, R.~Mohr, and I.~Mezi{\'c}}, {\em Applied
  {K}oopmanism}, Chaos: An Interdisciplinary Journal of Nonlinear Science, 22
  (2012), p.~047510, \url{https://doi.org/10.1063/1.4772195}.

\bibitem{chella1985fluid}
{\sc R.~Chella and J.~M. Ottino}, {\em Fluid mechanics of mixing in a
  single-screw extruder}, Industrial \& engineering chemistry fundamentals, 24
  (1985), pp.~170--180.

\bibitem{colbrook2021computing}
{\sc M.~Colbrook, A.~Horning, and A.~Townsend}, {\em Computing spectral
  measures of self-adjoint operators}, SIAM Review, 63 (2021), pp.~489--524,
  \url{https://doi.org/10.1137/20m1330944}.

\bibitem{colbrookthesis}
{\sc M.~J. Colbrook}, {\em The {Foundations of Infinite-Dimensional Spectral
  Computations}}, PhD thesis, University of Cambridge, 2020.

\bibitem{colbrook2021computingCIMP}
{\sc M.~J. Colbrook}, {\em Computing spectral measures and spectral types},
  Communications in Mathematical Physics, 384 (2021), pp.~433--501,
  \url{https://doi.org/10.1007/s00220-021-04072-4}.

\bibitem{colbrook2022computation}
{\sc M.~J. Colbrook}, {\em On the computation of geometric features of spectra
  of linear operators on {H}ilbert spaces}, Foundations of Computational
  Mathematics,  (2022), pp.~1--82,
  \url{https://doi.org/10.1007/s10208-022-09598-0}.

\bibitem{colbrook2023mpedmd}
{\sc M.~J. Colbrook}, {\em The mp{EDMD} algorithm for data-driven computations
  of measure-preserving dynamical systems}, SIAM Journal on Numerical Analysis,
  61 (2023), pp.~1585--1608, \url{https://doi.org/10.1137/22m1521407}.

\bibitem{colbrook2023multiverse}
{\sc M.~J. Colbrook}, {\em The multiverse of dynamic mode decomposition
  algorithms}, arXiv preprint arXiv:2312.00137,  (2023).

\bibitem{colbrook2023residualJFM}
{\sc M.~J. Colbrook, L.~J. Ayton, and M.~Sz{\H{o}}ke}, {\em Residual dynamic
  mode decomposition: {R}obust and verified {K}oopmanism}, Journal of Fluid
  Mechanics, 955 (2023), p.~A21, \url{https://doi.org/10.1017/jfm.2022.1052}.

\bibitem{colbrook2022foundations}
{\sc M.~J. Colbrook and A.~C. Hansen}, {\em The foundations of spectral
  computations via the solvability complexity index hierarchy}, Journal of the
  European Mathematical Society, 25 (2022), pp.~4639--4728,
  \url{https://doi.org/10.4171/jems/1289}.

\bibitem{colbrook2019compute}
{\sc M.~J. Colbrook, B.~Roman, and A.~C. Hansen}, {\em How to compute spectra
  with error control}, Physical Review Letters, 122 (2019), p.~250201,
  \url{https://doi.org/10.1103/physrevlett.122.250201}.

\bibitem{colbrook2021rigorous}
{\sc M.~J. Colbrook and A.~Townsend}, {\em Rigorous data-driven computation of
  spectral properties of {K}oopman operators for dynamical systems},
  Communications on Pure and Applied Mathematics, 77 (2023), pp.~221--283,
  \url{https://doi.org/10.1002/cpa.22125}.

\bibitem{conway2019course}
{\sc J.~B. Conway}, {\em A course in functional analysis}, vol.~96, Springer
  New York, 2~ed., 2007, \url{https://doi.org/10.1007/978-1-4757-4383-8}.

\bibitem{cornfeld2012ergodic}
{\sc I.~P. Cornfeld, S.~V. Fomin, and Y.~G. Sinai}, {\em Ergodic theory},
  vol.~245, Springer Science \& Business Media, 2012.

\bibitem{crimmins2020fourier}
{\sc H.~Crimmins and G.~Froyland}, {\em Fourier approximation of the
  statistical properties of {A}nosov maps on tori}, Nonlinearity, 33 (2020),
  p.~6244.

\bibitem{das2019delay}
{\sc S.~Das and D.~Giannakis}, {\em Delay-coordinate maps and the spectra of
  {K}oopman operators}, Journal of Statistical Physics, 175 (2019),
  pp.~1107--1145, \url{https://doi.org/10.1007/s10955-019-02272-w}.

\bibitem{das2021reproducing}
{\sc S.~Das, D.~Giannakis, and J.~Slawinska}, {\em Reproducing kernel {H}ilbert
  space compactification of unitary evolution groups}, Applied and
  Computational Harmonic Analysis, 54 (2021), pp.~75--136,
  \url{https://doi.org/10.1016/j.acha.2021.02.004}.

\bibitem{dellnitz2000isolated}
{\sc M.~Dellnitz, G.~Froyland, and S.~Sertl}, {\em On the isolated spectrum of
  the {P}erron--{F}robenius operator}, Nonlinearity, 13 (2000), pp.~1171--1188,
  \url{https://doi.org/10.1088/0951-7715/13/4/310}.

\bibitem{dorfle1985spectrum}
{\sc M.~D{\"o}rfle}, {\em Spectrum and eigenfunctions of the
  {F}robenius-{P}erron operator of the tent map}, Journal of statistical
  physics, 40 (1985), pp.~93--132.

\bibitem{drmavc2023data}
{\sc Z.~Drma{\v{c}} and I.~Mezi{\'c}}, {\em A data driven {K}oopman-{S}chur
  decomposition for computational analysis of nonlinear dynamics}, arXiv
  preprint arXiv:2312.15837,  (2023).

\bibitem{dubrovin2012modern}
{\sc B.~A. Dubrovin, A.~T. Fomenko, and S.~P. Novikov}, {\em {Modern Geometry -
  Methods and Applications Part I. The Geometry of Surfaces, Transformation
  Groups, and Fields}}, vol.~104, Springer New York, 1984,
  \url{https://doi.org/10.1007/978-1-4684-9946-9}.

\bibitem{eisner2015operator}
{\sc T.~Eisner, B.~Farkas, M.~Haase, and R.~Nagel}, {\em Operator theoretic
  aspects of ergodic theory}, vol.~272, Springer International Publishing,
  2015, \url{https://doi.org/10.1007/978-3-319-16898-2}.

\bibitem{epstein2007introduction}
{\sc C.~L. Epstein}, {\em Introduction to the mathematics of medical imaging},
  SIAM, 2007.

\bibitem{froyland1997computer}
{\sc G.~Froyland}, {\em Computer-assisted bounds for the rate of decay of
  correlations}, Communications in Mathematical Physics, 189 (1997),
  pp.~237--257.

\bibitem{froyland2007ulam}
{\sc G.~Froyland}, {\em On {U}lam approximation of the isolated spectrum and
  eigenfunctions of hyperbolic maps}, Discrete and Continuous Dynamical
  Systems, 17 (2007), pp.~671--689,
  \url{https://doi.org/10.3934/dcds.2007.17.671}.

\bibitem{froyland2008unwrapping}
{\sc G.~Froyland}, {\em Unwrapping eigenfunctions to discover the geometry of
  almost-invariant sets in hyperbolic maps}, Physica D: Nonlinear Phenomena,
  237 (2008), pp.~840--853, \url{https://doi.org/10.1016/j.physd.2007.11.004}.

\bibitem{froyland2014detecting}
{\sc G.~Froyland, C.~Gonz{\'a}lez-Tokman, and A.~Quas}, {\em Detecting isolated
  spectrum of transfer and {K}oopman operators with {F}ourier analytic tools},
  Journal of Computational Dynamics, 1 (2014), pp.~249--278,
  \url{https://doi.org/10.3934/jcd.2014.1.249}.

\bibitem{gadella2005measure}
{\sc M.~Gadella and F.~G{\'o}mez}, {\em A measure-theoretical approach to the
  nuclear and inductive spectral theorems}, Bulletin des sciences
  mathematiques, 129 (2005), pp.~567--590.

\bibitem{gaspard1992diffusion}
{\sc P.~Gaspard}, {\em Diffusion, effusion, and chaotic scattering: {A}n
  exactly solvable liouvillian dynamics}, Journal of statistical physics, 68
  (1992), pp.~673--747.

\bibitem{gaspard1992r}
{\sc P.~Gaspard}, {\em r-adic one-dimensional maps and the {E}uler summation
  formula}, Journal of Physics A: Mathematical and General, 25 (1992), p.~L483.

\bibitem{gel1952geodesic}
{\sc I.~M. Gelfand and S.~V. Fomin}, {\em Geodesic flows on manifolds of
  constant negative curvature}, Uspekhi Matematicheskikh Nauk, 7 (1952),
  pp.~118--137.

\bibitem{gelfand1955decomp}
{\sc I.~M. Gelfand and A.~G. Kostyuchenko}, {\em Generalized eigenfunction
  decompositions of differential and other operators}, Dokl. Akad. Nauk SSSR,
  103 (1955), pp.~349--352.

\bibitem{gel4generalized}
{\sc I.~M. Gelfand and N.~Y. Vilenkin}, {\em Generalized Functions, Volume 4:
  Applications of Harmonic Analysis}, Academic Press, 1964.

\bibitem{govindarajan2019approximation}
{\sc N.~Govindarajan, R.~Mohr, S.~Chandrasekaran, and I.~Mezi{\'c}}, {\em On
  the approximation of {K}oopman spectra for measure preserving
  transformations}, SIAM Journal on Applied Dynamical Systems, 18 (2019),
  pp.~1454--1497, \url{https://doi.org/10.1137/18m1175094}.

\bibitem{govindarajan2021approximation}
{\sc N.~Govindarajan, R.~Mohr, S.~Chandrasekaran, and I.~Mezic}, {\em On the
  approximation of {K}oopman spectra of measure-preserving flows}, SIAM Journal
  on Applied Dynamical Systems, 20 (2021), pp.~232--261,
  \url{https://doi.org/10.1137/19m1282908}.

\bibitem{hasegawa1993spectral}
{\sc H.~H. Hasegawa and D.~J. Driebe}, {\em Spectral determination and physical
  conditions for a class of chaotic piecewise-linear maps}, Physics Letters A,
  176 (1993), pp.~193--201.

\bibitem{hasegawa1992decaying}
{\sc H.~H. Hasegawa and W.~C. Saphir}, {\em Decaying eigenstates for simple
  chaotic systems}, Physics Letters A, 161 (1992), pp.~471--476.

\bibitem{hill1986introduction}
{\sc T.~L. Hill}, {\em An introduction to statistical thermodynamics}, Courier
  Corporation, 1986.

\bibitem{kamb2020time}
{\sc M.~Kamb, E.~Kaiser, S.~L. Brunton, and J.~N. Kutz}, {\em Time-delay
  observables for {K}oopman: {T}heory and applications}, SIAM Journal on
  Applied Dynamical Systems, 19 (2020), pp.~886--917,
  \url{https://doi.org/10.1137/18m1216572}.

\bibitem{kantz2004nonlinear}
{\sc H.~Kantz and T.~Schreiber}, {\em Nonlinear time series analysis},
  Cambridge nonlinear science series, Cambridge Univ. Press, Cambridge, second
  edition~ed., 2006.

\bibitem{kato1970theory}
{\sc T.~Kato and S.~T. Kuroda}, {\em Theory of simple scattering and
  eigenfunction expansions}, in Functional Analysis and Related Fields:
  Proceedings of a Conference in honor of Professor Marshall Stone, held at the
  University of Chicago, May 1968, Springer, 1970, pp.~99--131.

\bibitem{katznelson2004introduction}
{\sc Y.~Katznelson}, {\em An Introduction to Harmonic Analysis}, Cambridge
  University Press, 2004, \url{https://doi.org/10.1017/cbo9781139165372}.

\bibitem{keller1999stability}
{\sc G.~Keller and C.~Liverani}, {\em Stability of the spectrum for transfer
  operators}, Annali della Scuola Normale Superiore di Pisa-Classe di Scienze,
  28 (1999), pp.~141--152.

\bibitem{koopman1931hamiltonian}
{\sc B.~O. Koopman}, {\em Hamiltonian systems and transformation in {H}ilbert
  space}, Proceedings of the National Academy of Sciences, 17 (1931),
  pp.~315--318, \url{https://doi.org/10.1073/pnas.17.5.315}.

\bibitem{koopman1932dynamical}
{\sc B.~O. Koopman and J.~von Neumann}, {\em Dynamical systems of continuous
  spectra}, Proceedings of the National Academy of Sciences, 18 (1932),
  pp.~255--263, \url{https://doi.org/10.1073/pnas.18.3.255}.

\bibitem{korda2020data}
{\sc M.~Korda, M.~Putinar, and I.~Mezi{\'c}}, {\em Data-driven spectral
  analysis of the {K}oopman operator}, Applied and Computational Harmonic
  Analysis, 48 (2020), pp.~599--629,
  \url{https://doi.org/10.1016/j.acha.2018.08.002}.

\bibitem{koseff1984lid}
{\sc J.~Koseff and R.~Street}, {\em The lid-driven cavity flow: a synthesis of
  qualitative and quantitative observations},  (1984).

\bibitem{kryloff1937theorie}
{\sc N.~Kryloff and N.~Bogoliouboff}, {\em La th{\'e}orie g{\'e}n{\'e}rale de
  la mesure dans son application {\`a} l'{\'e}tude des syst{\`e}mes dynamiques
  de la m{\'e}canique non lin{\'e}aire}, The Annals of Mathematics, 38 (1937),
  pp.~65--113, \url{https://doi.org/10.2307/1968511}.

\bibitem{lorenz1963deterministic}
{\sc E.~N. Lorenz}, {\em Deterministic nonperiodic flow}, Journal of the
  Atmospheric Sciences, 20 (1963), pp.~130--141,
  \url{https://doi.org/10.1175/1520-0469(1963)020<0130:dnf>2.0.co;2}.

\bibitem{lusch2018deep}
{\sc B.~Lusch, J.~N. Kutz, and S.~L. Brunton}, {\em Deep learning for universal
  linear embeddings of nonlinear dynamics}, Nature Communications, 9 (2018),
  pp.~1--10, \url{https://doi.org/10.1038/s41467-018-07210-0}.

\bibitem{luzzatto2005lorenz}
{\sc S.~Luzzatto, I.~Melbourne, and F.~Paccaut}, {\em The {L}orenz attractor is
  mixing}, Communications in Mathematical Physics, 260 (2005), pp.~393--401,
  \url{https://doi.org/10.1007/s00220-005-1411-9}.

\bibitem{mauroy2016global}
{\sc A.~Mauroy and I.~Mezi{\'c}}, {\em Global stability analysis using the
  eigenfunctions of the {K}oopman operator}, IEEE Transactions on Automatic
  Control, 61 (2016), pp.~3356--3369,
  \url{https://doi.org/10.1109/tac.2016.2518918}.

\bibitem{mauroy2013isostables}
{\sc A.~Mauroy, I.~Mezi{\'c}, and J.~Moehlis}, {\em Isostables, isochrons, and
  {K}oopman spectrum for the action--angle representation of stable fixed point
  dynamics}, Physica D: Nonlinear Phenomena, 261 (2013), pp.~19--30,
  \url{https://doi.org/10.1016/j.physd.2013.06.004}.

\bibitem{mezic2005spectral}
{\sc I.~Mezi{\'c}}, {\em Spectral properties of dynamical systems, model
  reduction and decompositions}, Nonlinear Dynamics, 41 (2005), pp.~309--325,
  \url{https://doi.org/10.1007/s11071-005-2824-x}.

\bibitem{mezic2013analysis}
{\sc I.~Mezi{\'c}}, {\em Analysis of fluid flows via spectral properties of the
  {K}oopman operator}, Annual Review of Fluid Mechanics, 45 (2013),
  pp.~357--378, \url{https://doi.org/10.1146/annurev-fluid-011212-140652}.

\bibitem{mezic2015applications}
{\sc I.~Mezi{\'c}}, {\em On applications of the spectral theory of the
  {K}oopman operator in dynamical systems and control theory}, in 2015 54th
  IEEE Conference on Decision and Control (CDC), IEEE, Dec. 2015,
  pp.~7034--7041, \url{https://doi.org/10.1109/cdc.2015.7403328}.

\bibitem{mezic2020spectrum}
{\sc I.~Mezi{\'c}}, {\em Spectrum of the {K}oopman operator, spectral
  expansions in functional spaces, and state-space geometry}, Journal of
  Nonlinear Science, 30 (2020), pp.~2091--2145,
  \url{https://doi.org/10.1007/s00332-019-09598-5}.

\bibitem{mezicAMS}
{\sc I.~Mezi{\'c}}, {\em Koopman operator, geometry, and learning of dynamical
  systems}, Notices of the American Mathematical Society, 68 (2021), p.~1,
  \url{https://doi.org/10.1090/noti2306}.

\bibitem{mohr2022koopman}
{\sc R.~Mohr, M.~Fonoberova, and I.~Mezic}, {\em Koopman reduced order modeling
  with confidence bounds}, arXiv preprint arXiv:2209.13127,  (2022).

\bibitem{pan2020structure}
{\sc S.~Pan and K.~Duraisamy}, {\em On the structure of time-delay embedding in
  linear models of non-linear dynamical systems}, Chaos: An Interdisciplinary
  Journal of Nonlinear Science, 30 (2020), p.~073135,
  \url{https://doi.org/10.1063/5.0010886}.

\bibitem{pollicott1985rate}
{\sc M.~Pollicott}, {\em On the rate of mixing of {A}xiom {A} flows},
  Inventiones mathematicae, 81 (1985), pp.~413--426.

\bibitem{rowley2009spectral}
{\sc C.~W. Rowley, I.~Mezi{\'c}, S.~Bagheri, P.~Schlatter, and D.~S.
  Henningson}, {\em Spectral analysis of nonlinear flows}, Journal of Fluid
  Mechanics, 641 (2009), pp.~115--127,
  \url{https://doi.org/10.1017/s0022112009992059}.

\bibitem{rowley2010reduced}
{\sc C.~W. Rowley, I.~Mezi{\'c}, S.~Bagheri, P.~Schlatter, and D.~S.
  Henningson}, {\em Reduced-order models for flow control: balanced models and
  koopman modes}, in Seventh IUTAM Symposium on Laminar-Turbulent Transition:
  Proceedings of the Seventh IUTAM Symposium on Laminar-Turbulent Transition,
  Stockholm, Sweden, 2009, Springer, 2010, pp.~43--50.

\bibitem{ruelle1986resonances}
{\sc D.~Ruelle}, {\em Resonances of chaotic dynamical systems}, Physical review
  letters, 56 (1986), p.~405.

\bibitem{saphir1992spectral}
{\sc W.~C. Saphir and H.~H. Hasegawa}, {\em Spectral representations of the
  {B}ernoulli map}, Physics Letters A, 171 (1992), pp.~317--322.

\bibitem{shen1991hopf}
{\sc J.~Shen}, {\em Hopf bifurcation of the unsteady regularized driven cavity
  flow}, Journal of Computational Physics, 95 (1991), pp.~228--245.

\bibitem{shields1973theory}
{\sc P.~C. Shields}, {\em The Theory of {B}ernoulli Shifts}, University of
  Chicago Press, Chicago, 1973.

\bibitem{simon2005orthogonal}
{\sc B.~Simon}, {\em Orthogonal {P}olynomials on the {U}nit {C}ircle: {P}art 1:
  {C}lassical {T}heory}, vol.~54 of American Mathematical Society Colloquium
  Publications, American Mathematical Society, 2005.

\bibitem{simon2010szegHo}
{\sc B.~Simon}, {\em Szeg{\H{o}}'s theorem and its descendants: {S}pectral
  theory for {$L^2$} perturbations of orthogonal polynomials}, vol.~6 of Porter
  Lectures, Princeton University Press, 2010,
  \url{https://doi.org/10.1515/9781400837052}.

\bibitem{slipantschuk2020dynamic}
{\sc J.~Slipantschuk, O.~F. Bandtlow, and W.~Just}, {\em Dynamic mode
  decomposition for analytic maps}, Communications in Nonlinear Science and
  Numerical Simulation, 84 (2020), p.~105179,
  \url{https://doi.org/10.1016/j.cnsns.2020.105179}.

\bibitem{stone1932linear}
{\sc M.~H. Stone}, {\em Linear {T}ransformations in {H}ilbert {S}pace and their
  {A}pplications to {A}nalysis}, vol.~15 of Colloquium Publications, The
  American Mathematical Society, New York, 1932.

\bibitem{suchanecki1996rigged}
{\sc Z.~Suchanecki, I.~Antoniou, S.~Tasaki, and O.~F. Bandtlow}, {\em Rigged
  {H}ilbert spaces for chaotic dynamical systems}, Journal of Mathematical
  Physics, 37 (1996), pp.~5837--5847.

\bibitem{takens2006detecting}
{\sc F.~Takens}, {\em Detecting strange attractors in turbulence}, in Dynamical
  Systems and Turbulence, Warwick 1980: proceedings of a symposium held at the
  University of Warwick 1979/80, Springer, 2006, pp.~366--381.

\bibitem{tasaki1993deterministic}
{\sc S.~Tasaki, I.~Antoniou, and Z.~Suchanecki}, {\em Deterministic diffusion,
  de {R}ham equation and fractal eigenvectors}, Physics Letters A, 179 (1993),
  pp.~97--102.

\bibitem{trefethen2022exactness}
{\sc L.~N. Trefethen}, {\em Exactness of quadrature formulas}, SIAM Review, 64
  (2022), pp.~132--150.

\bibitem{trefethen2005spectra}
{\sc L.~N. Trefethen and M.~Embree}, {\em Spectra and Pseudospectra: {T}he
  Behavior of Nonnormal Matrices and Operators}, Princeton University Press,
  Jan. 2005, \url{https://doi.org/10.1515/9780691213101}.

\bibitem{trefethen2014exponentially}
{\sc L.~N. Trefethen and J.~Weideman}, {\em The exponentially convergent
  trapezoidal rule}, SIAM Rev., 56 (2014), pp.~385--458.

\bibitem{tseng2001mixing}
{\sc Y.-h. Tseng and J.~H. Ferziger}, {\em Mixing and available potential
  energy in stratified flows}, Physics of Fluids, 13 (2001), pp.~1281--1293.

\bibitem{Tucker2002}
{\sc W.~Tucker}, {\em A rigorous {ODE} solver and {S}male's 14th problem},
  Foundations of Computational Mathematics, 2 (2002), pp.~53--117,
  \url{https://doi.org/10.1007/s002080010018}.

\bibitem{valva2023consistent}
{\sc C.~Valva and D.~Giannakis}, {\em Consistent spectral approximation of
  {K}oopman operators using resolvent compactification}, arXiv preprint
  arXiv:2309.00732,  (2023), \url{https://doi.org/10.48550/ARXIV.2309.00732},
  \url{https://arxiv.org/abs/2309.00732}.

\bibitem{walters2000introduction}
{\sc P.~Walters}, {\em An introduction to ergodic theory}, vol.~79 of Graduate
  texts in mathematics, Springer, New York, 1. softcover printing~ed., 2000.

\bibitem{williams2015data}
{\sc M.~O. Williams, I.~G. Kevrekidis, and C.~W. Rowley}, {\em A data--driven
  approximation of the {K}oopman operator: {E}xtending dynamic mode
  decomposition}, Journal of Nonlinear Science, 25 (2015), pp.~1307--1346,
  \url{https://doi.org/10.1007/s00332-015-9258-5}.

\bibitem{wormell2023orthogonal}
{\sc C.~L. Wormell}, {\em Orthogonal polynomial approximation and extended
  dynamic mode decomposition in chaos}, arXiv preprint arXiv:2305.08074,
  (2023), \url{https://doi.org/10.48550/ARXIV.2305.08074},
  \url{https://arxiv.org/abs/2305.08074}.

\bibitem{young2002srb}
{\sc L.-S. Young}, {\em What are {SRB} measures, and which dynamical systems
  have them?}, Journal of Statistical Physics, 108 (2002), pp.~733--754,
  \url{https://doi.org/10.1023/a:1019762724717}.

\bibitem{zaslavsky2002chaos}
{\sc G.~M. Zaslavsky}, {\em Chaos, fractional kinetics, and anomalous
  transport}, Physics Reports, 371 (2002), pp.~461--580,
  \url{https://doi.org/10.1016/s0370-1573(02)00331-9}.

\end{thebibliography}

\end{document}